\newtheorem{cor}[subsubsection]{Corollary}
\newtheorem{lem}[subsubsection]{Lemma}
\newtheorem{prop}[subsubsection]{Proposition}
\newtheorem{conj}[subsubsection]{Conjecture}
\newtheorem{thm}[subsubsection]{Theorem}
\theoremstyle{definition}
\theoremstyle{remark}
\newtheorem{rem}[subsubsection]{Remark}
\newcommand{\thmref}[1]{Theorem~\ref{#1}}
\newcommand{\secref}[1]{Sect.~\ref{#1}}
\newcommand{\lemref}[1]{Lemma~\ref{#1}}
\newcommand{\propref}[1]{Proposition~\ref{#1}}
\newcommand{\corref}[1]{Corollary~\ref{#1}}
\newcommand{\conjref}[1]{Conjecture~\ref{#1}}
\numberwithin{equation}{section}
\newcommand{\nc}{\newcommand}
\nc{\renc}{\renewcommand}
\nc{\ssec}{\subsection}
\nc{\sssec}{\subsubsection}
\nc{\on}{\operatorname}
\nc\ol{\overline}
\nc\wt{\widetilde}
\nc\tboxtimes{\wt{\boxtimes}}
\nc\tstar{\wt{\star}}
\nc{\alp}{a}
\nc{\ZZ}{{\mathbb Z}}
\nc{\NN}{{\mathbb N}}
\nc{\OO}{{\mathbb O}}
\renc{\SS}{{\mathbb S}}
\nc{\DD}{{\mathbb D}}
\nc{\GG}{{\mathbb G}}
\nc{\Fq}{{\mathbb F}_q}
\nc{\Fqb}{\ol{{\mathbb F}_q}}
\nc{\Ql}{\ol{{\mathbb Q}_\ell}}
\nc{\id}{\text{id}}
\nc\X{\mathcal X}
\nc{\Hom}{\on{Hom}}
\nc{\Lie}{\on{Lie}}
\nc{\Loc}{\on{Loc}}
\nc{\Pic}{\on{Pic}}
\nc{\Bun}{\on{Bun}}
\nc{\IC}{\on{IC}}
\nc{\Aut}{\on{Aut}}
\nc{\rk}{\on{rk}}
\nc{\Sh}{\on{Sh}}
\nc{\Perv}{\on{Perv}}
\nc{\pos}{{\on{pos}}}
\nc{\Conv}{\on{Conv}}
\nc{\Sph}{\on{Sph}}
\nc{\Sym}{\on{Sym}}
\nc{\BunBb}{\overline{\Bun}_B}
\nc{\BunNb}{\overline{\Bun}_N}
\nc{\BunTb}{\overline{\Bun}_T}
\nc{\BunBbm}{\overline{\Bun}_{B^-}}
\nc{\BunBbel}{\overline{\Bun}_{B,el}}
\nc{\BunBbmel}{\overline{\Bun}_{B^-,el}}
\nc{\Buno}{\overset{o}{\Bun}}
\nc{\BunPb}{{\overline{\Bun}_P}}
\nc{\BunBM}{\Bun_{B(M)}}
\nc{\BunBMb}{\overline{\Bun}_{B(M)}}
\nc{\BunPbw}{{\widetilde{\Bun}_P}}
\nc{\BunBP}{\widetilde{\Bun}_{B,P}}
\nc{\GUb}{\overline{G/U}}
\nc{\GUPb}{\overline{G/U(P)}}
\nc{\Hhom}{\underline{\on{Hom}}}
\nc\syminfty{\on{Sym}^{\infty}}
\nc\lal{\ol{\kappa_x}}
\nc\xl{\ol{x}}
\nc\thl{\ol{\theta}}
\nc\nul{\ol{\nu}}
\nc\mul{\ol{\mu}}
\nc{\oX}{\overset{o}{X}{}}
\nc{\hl}{\overset{\leftarrow}h{}}
\nc{\hr}{\overset{\rightarrow}h{}}
\nc{\M}{{\mathcal M}}
\nc{\N}{{\mathcal N}}
\nc{\F}{{\mathcal F}}
\nc{\D}{{\mathcal D}}
\nc{\Q}{{\mathcal Q}}
\nc{\Y}{{\mathcal Y}}
\nc{\G}{{\mathcal G}}
\nc{\E}{{\mathcal E}}
\nc{\CalC}{{\mathcal C}}
\nc\Dh{\widehat{\D}}
\nc{\C}{{\mathcal C}}
\nc{\K}{{\mathcal K}}
\renewcommand{\H}{{\mathcal H}}
\nc{\T}{{\mathcal T}}
\nc{\V}{{\mathcal V}}
\renc{\P}{{\mathcal P}}
\nc{\A}{{\mathcal A}}
\nc{\B}{{\mathcal B}}
\nc{\U}{{\mathcal U}}
\nc{\Gr}{{\on{Gr}}}
\nc{\frn}{{\check{\mathfrak u}(P)}}
\nc{\fC}{\mathfrak C}
\nc{\p}{\mathfrak p}
\nc{\q}{\mathfrak q}
\nc\f{{\mathfrak f}}
\nc{\qo}{{\mathfrak q}}
\nc{\po}{{\mathfrak p}}
\nc{\s}{{\mathfrak s}}
\nc\w{\text{w}}
\nc\Spec{\on{Spec}}
\nc\Mod{\on{Mod}}
\nc{\tw}{\widetilde{\mathfrak t}}
\nc{\pw}{\widetilde{\mathfrak p}}
\nc{\qw}{\widetilde{\mathfrak q}}
\nc{\jw}{\widetilde j}
\nc{\grb}{\overline{\Gr}}
\nc{\I}{\mathcal I}
\nc{\kappach}{{\check\kappa_x}}
\nc{\Lambdach}{{\check\Lambda}{}}
\nc{\much}{{\check\mu}}
\nc{\omegach}{{\check\omega}}
\nc{\nuch}{{\check\nu}}
\nc{\etach}{{\check\eta}}
\nc{\alphach}{{\checka}}
\nc{\oblvtach}{{\check\oblvta}}
\nc{\pich}{{\check\pi}}
\nc{\ch}{{\check h}}
\nc{\Hb}{\overline{\H}}
\nc{\BA}{{\mathbb{A}}}
\nc{\BC}{{\mathbb{C}}}
\nc{\BE}{{\mathbb{E}}}
\nc{\BF}{{\mathbb{F}}}
\nc{\BG}{{\mathbb{G}}}
\nc{\BM}{{\mathbb{M}}}
\nc{\BO}{{\mathbb{O}}}
\nc{\BD}{{\mathbb{D}}}
\nc{\BN}{{\mathbb{N}}}
\nc{\BP}{{\mathbb{P}}}
\nc{\BQ}{{\mathbb{Q}}}
\nc{\BR}{{\mathbb{R}}}
\nc{\BZ}{{\mathbb{Z}}}
\nc{\BS}{{\mathbb{S}}}
\nc{\CA}{{\mathcal{A}}}
\nc{\CB}{{\mathcal{B}}}
\nc{\CE}{{\mathcal{E}}}
\nc{\CF}{{\mathcal{F}}}
\nc{\CG}{{\mathcal{G}}}
\nc{\CH}{{\mathcal{H}}}
\nc{\CL}{{\mathcal{L}}}
\nc{\CC}{{\mathcal{C}}}
\nc{\CM}{{\mathcal{M}}}
\nc{\CN}{{\mathcal{N}}}
\nc{\CK}{{\mathcal{K}}}
\nc{\CO}{{\mathcal{O}}}
\nc{\CP}{{\mathcal{P}}}
\nc{\CQ}{{\mathcal{Q}}}
\nc{\CR}{{\mathcal{R}}}
\nc{\CS}{{\mathcal{S}}}
\nc{\CT}{{\mathcal{T}}}
\nc{\CU}{{\mathcal{U}}}
\nc{\CV}{{\mathcal{V}}}
\nc{\CW}{{\mathcal{W}}}
\nc{\CX}{{\mathcal{X}}}
\nc{\CY}{{\mathcal{Y}}}
\nc{\CZ}{{\mathcal{Z}}}
\nc{\CI}{{\mathcal{I}}}
\nc{\CJ}{{\mathcal{J}}}
\nc{\csM}{{\check{\mathcal A}}{}}
\nc{\oM}{{\overset{\circ}{\mathcal M}}{}}
\nc{\obM}{{\overset{\circ}{\mathbf M}}{}}
\nc{\oCA}{{\overset{\circ}{\mathcal A}}{}}
\nc{\obA}{{\overset{\circ}{\mathbf A}}{}}
\nc{\ooM}{{\overset{\circ}{M}}{}}
\nc{\osM}{{\overset{\circ}{\mathsf M}}{}}
\nc{\vM}{{\overset{\bullet}{\mathcal M}}{}}
\nc{\nM}{{\underset{\bullet}{\mathcal M}}{}}
\nc{\oD}{{\overset{\circ}{\mathcal D}}{}}
\nc{\obD}{{\overset{\circ}{\mathbf D}}{}}
\nc{\oA}{{\overset{\circ}{\mathbb A}}{}}
\nc{\op}{{\overset{\bullet}{\mathbf p}}{}}
\nc{\cp}{{\overset{\circ}{\mathbf p}}{}}
\nc{\oU}{{\overset{\bullet}{\mathcal U}}{}}
\nc{\oZ}{{\overset{\circ}{\mathcal Z}}{}}
\nc{\ofZ}{{\overset{\circ}{\mathfrak Z}}{}}
\nc{\oF}{{\overset{\circ}{\fF}}}
\nc{\fa}{{\mathfrak{a}}}
\nc{\fb}{{\mathfrak{b}}}
\nc{\fd}{{\mathfrak{d}}}
\nc{\ff}{{\mathfrak{f}}}
\nc{\fg}{{\mathfrak{g}}}
\nc{\fgl}{{\mathfrak{gl}}}
\nc{\fh}{{\mathfrak{h}}}
\nc{\fj}{{\mathfrak{j}}}
\nc{\fl}{{\mathfrak{l}}}
\nc{\fm}{{\mathfrak{m}}}
\nc{\fn}{{\mathfrak{n}}}
\nc{\fu}{{\mathfrak{u}}}
\nc{\fp}{{\mathfrak{p}}}
\nc{\fr}{{\mathfrak{r}}}
\nc{\fs}{{\mathfrak{s}}}
\nc{\ft}{{\mathfrak{t}}}
\nc{\fz}{{\mathfrak{z}}}
\nc{\fsl}{{\mathfrak{sl}}}
\nc{\hsl}{{\widehat{\mathfrak{sl}}}}
\nc{\hgl}{{\widehat{\mathfrak{gl}}}}
\nc{\hg}{{\widehat{\mathfrak{g}}}}
\nc{\chg}{{\widehat{\mathfrak{g}}}{}^\vee}
\nc{\hn}{{\widehat{\mathfrak{n}}}}
\nc{\chn}{{\widehat{\mathfrak{n}}}{}^\vee}
\nc{\fA}{{\mathfrak{A}}}
\nc{\fB}{{\mathfrak{B}}}
\nc{\fD}{{\mathfrak{D}}}
\nc{\fE}{{\mathfrak{E}}}
\nc{\fF}{{\mathfrak{F}}}
\nc{\fG}{{\mathfrak{G}}}
\nc{\fK}{{\mathfrak{K}}}
\nc{\fL}{{\mathfrak{L}}}
\nc{\fM}{{\mathfrak{M}}}
\nc{\fN}{{\mathfrak{N}}}
\nc{\fP}{{\mathfrak{P}}}
\nc{\fU}{{\mathfrak{U}}}
\nc{\fV}{{\mathfrak{V}}}
\nc{\fZ}{{\mathfrak{Z}}}
\nc{\bb}{{\mathbf{b}}}
\nc{\bc}{{\mathbf{c}}}
\nc{\bd}{{\mathbf{d}}}
\nc{\bbf}{{\mathbf{f}}}
\nc{\be}{{\mathbf{e}}}
\nc{\bi}{{\mathbf{i}}}
\nc{\bj}{{\mathbf{j}}}
\nc{\bn}{{\mathbf{n}}}
\nc{\bp}{{\mathbf{p}}}
\nc{\bq}{{\mathbf{q}}}
\nc{\bu}{{\mathbf{u}}}
\nc{\bv}{{\mathbf{v}}}
\nc{\bx}{{\mathbf{x}}}
\nc{\bs}{{\mathbf{s}}}
\nc{\by}{{\mathbf{y}}}
\nc{\bw}{{\mathbf{w}}}
\nc{\bA}{{\mathbf{A}}}
\nc{\bK}{{\mathbf{K}}}
\nc{\bB}{{\mathbf{B}}}
\nc{\bF}{{\mathbf{F}}}
\nc{\bC}{{\mathbf{C}}}
\nc{\bG}{{\mathbf{G}}}
\nc{\bD}{{\mathbf{D}}}
\nc{\bE}{{\mathbf{E}}}
\nc{\bH}{{\mathbf{H}}}
\nc{\bO}{{\mathbf{O}}}
\nc{\bI}{{\mathbf{I}}}
\nc{\bM}{{\mathbf{M}}}
\nc{\bN}{{\mathbf{N}}}
\nc{\bV}{{\mathbf{V}}}
\nc{\bW}{{\mathbf{W}}}
\nc{\bX}{{\mathbf{X}}}
\nc{\bZ}{{\mathbf{Z}}}
\nc{\bS}{{\mathbf{S}}}
\nc{\sA}{{\mathsf{A}}}
\nc{\sB}{{\mathsf{B}}}
\nc{\sC}{{\mathsf{C}}}
\nc{\sD}{{\mathsf{D}}}
\nc{\sF}{{\mathsf{F}}}
\nc{\sK}{{\mathsf{K}}}
\nc{\sM}{{\mathsf{M}}}
\nc{\sO}{{\mathsf{O}}}
\nc{\sW}{{\mathsf{W}}}
\nc{\sQ}{{\mathsf{Q}}}
\nc{\sP}{{\mathsf{P}}}
\nc{\sZ}{{\mathsf{Z}}}
\nc{\sk}{{\mathsf{k}}}
\nc{\sr}{{\mathsf{r}}}
\nc{\bk}{{\mathsf{k}}}
\nc{\sg}{{\mathsf{g}}}
\nc{\sff}{{\mathsf{f}}}
\nc{\sfb}{{\mathsf{b}}}
\nc{\sfc}{{\mathsf{c}}}
\nc{\sd}{{\mathsf{d}}}
\nc{\BK}{{\bar{K}}}
\nc{\tA}{{\widetilde{\mathbf{A}}}}
\nc{\tB}{{\widetilde{\mathcal{B}}}}
\nc{\tg}{{\widetilde{\mathfrak{g}}}}
\nc{\tG}{{\widetilde{G}}}
\nc{\TM}{{\widetilde{\mathbb{M}}}{}}
\nc{\tO}{{\widetilde{\mathsf{O}}}{}}
\nc{\tU}{{\widetilde{\mathfrak{U}}}{}}
\nc{\TZ}{{\tilde{Z}}}
\nc{\tx}{{\tilde{x}}}
\nc{\tbv}{{\tilde{\bv}}}
\nc{\tfP}{{\widetilde{\mathfrak{P}}}{}}
\nc{\tz}{{\tilde{\zeta}}}
\nc{\tmu}{{\tilde{\mu}}}
\nc{\urho}{\underline{\pi}}
\nc{\uB}{\underline{B}}
\nc{\uC}{{\underline{\mathbb{C}}}}
\nc{\ui}{\underline{i}}
\nc{\uj}{\underline{j}}
\nc{\ofP}{{\overline{\mathfrak{P}}}}
\nc{\oB}{{\overline{\mathcal{B}}}}
\nc{\og}{{\overline{\mathfrak{g}}}}
\nc{\oI}{{\overline{I}}}
\nc{\eps}{\varepsilon}
\nc{\hrho}{{\hat{\pi}}}
\nc{\one}{{\mathbf{1}}}
\nc{\two}{{\mathbf{t}}}
\nc{\Rep}{{\mathop{\operatorname{\rm Rep}}}}
\nc{\Tot}{{\mathop{\operatorname{\rm Tot}}}}
\nc{\Ker}{{\mathop{\operatorname{\rm Ker}}}}
\nc{\Hilb}{{\mathop{\operatorname{\rm Hilb}}}}
\nc{\End}{{\mathop{\operatorname{\rm End}}}}
\nc{\Ext}{{\mathop{\operatorname{\rm Ext}}}}
\nc{\CHom}{{\mathop{\operatorname{{\mathcal{H}}\it om}}}}
\nc{\GL}{{\mathop{\operatorname{\rm GL}}}}
\nc{\gr}{{\mathop{\operatorname{\rm gr}}}}
\nc{\Id}{{\mathop{\operatorname{\rm Id}}}}
\nc{\de}{{\mathop{\operatorname{\rm def}}}}
\nc{\length}{{\mathop{\operatorname{\rm length}}}}
\nc{\supp}{{\mathop{\operatorname{\rm supp}}}}
\nc{\Cliff}{{\mathsf{Cliff}}}
\nc{\Fl}{\on{Fl}}
\nc{\Fib}{{\mathsf{Fib}}}
\nc{\Coh}{{\mathsf{Coh}}}
\nc{\QCoh}{{\on{QCoh}}}
\nc{\IndCoh}{{\on{IndCoh}}}
\nc{\FCoh}{{\mathsf{FCoh}}}
\nc{\reg}{{\text{\rm reg}}}
\nc{\cplus}{{\mathbf{C}_+}}
\nc{\cminus}{{\mathbf{C}_-}}
\nc{\cthree}{{\mathbf{C}_*}}
\nc{\Qbar}{{\bar{Q}}}
\nc\Eis{\on{Eis}}
\nc\Eisb{\ol\Eis{}}
\nc\Eisr{\on{Eis}^{rat}{}}
\nc\wh{\widehat}
\nc{\Def}{\on{Def_{\check{\fb}}(E)}}
\nc{\barZ}{\overline{Z}{}}
\nc{\barbarZ}{\overline{\barZ}{}}
\nc{\barpi}{\overline\iota}
\nc{\barbarpi}{\overline\barpi}
\nc{\barpip}{\overline\iota{}^+}
\nc{\barpim}{\overline\iota{}^-}
\nc{\fq}{\mathfrak q}
\nc{\fqb}{\ol{\fq}{}}
\nc{\fpb}{\ol{\fp}{}}
\nc{\fpr}{{\fp^{rat}}{}}
\nc{\fqr}{{\fq^{rat}}{}}
\nc{\hattimes}{\wh\otimes}
\nc{\bh}{{\bar{h}}}
\nc{\bOmega}{{\overline{\Omega(\check \fn)}}}
\nc{\seq}[1]{\stackrel{#1}{\sim}}
\nc{\cT}{{\check{T}}}
\nc{\cG}{{\check{G}}}
\nc{\cM}{{\check{M}}}
\nc{\cB}{{\check{B}}}
\nc{\ct}{{\check{\mathfrak t}}}
\nc{\cg}{{\check{\fg}}}
\nc{\cb}{{\check{\fb}}}
\nc{\cn}{{\check{\fn}}}
\nc{\cLambda}{{\check\Lambda}}
\nc{\cla}{{\check\kappa_x}}
\nc{\cmu}{{\check\mu}}
\nc{\cnu}{{\check\nu}}
\nc{\ceta}{{\check\eta}}
\nc{\DefbE}{{\on{Def}_{\cB}(E_\cT)}}
\nc{\imathb}{{\ol{\imath}}}
\nc{\rlr}{\overset{\longrightarrow}{\underset{\longrightarrow}\longleftarrow}}
\nc{\KG}{K\backslash G}
\nc{\comult}{{co\text{-}mult}}
\nc{\counit}{{co\text{-}unit}}
\nc{\uHom}{{\underline{\Maps}}}
\nc{\dgSch}{\on{Sch}}
\nc{\Sch}{\on{Sch}}
\nc{\affdgSch}{\on{Sch}^{\on{aff}}}
\nc{\affSch}{\on{Sch}^{\on{aff}}}
\nc{\Groupoids}{\on{Grpd}}
\nc{\inftygroup}{\on{Spc}}
\nc{\inftyCat}{\infty\on{-Cat}}
\nc{\StinftyCat}{\inftyCat^{\on{St}}}
\nc{\MoninftyCat}{\infty\on{-Cat}^{\on{Mon}}}
\nc{\SymMoninftyCat}{\infty\on{-Cat}^{\on{SymMon}}}
\nc{\SymMonStinftyCat}{\on{DGCat}^{\on{SymMon}}}
\nc{\MonStinftyCat}{\on{DGCat}^{\on{Mon}}}
\nc{\inftystack}{\on{Stk}}
\nc{\inftystackalg}{Stk^{1\text{-}alg}}
\nc{\inftyprestack}{\on{PreStk}}
\nc{\inftydgnearstack}{\on{NearStk}}
\nc{\inftydgstack}{\on{Stk}}
\nc{\inftydgstackalg}{DGStk^{1\text{-}alg}}
\nc{\inftydgprestack}{\on{PreStk}}
\nc{\dgindSch}{\on{indSch}}
\nc{\indSch}{{}^{\on{cl}}\!\on{indSch}}
\nc{\infSch}{\on{infSch}}
\nc{\dr}{{\on{dR}}}
\nc{\mmod}{{\on{-}\!{\mathbf{mod}}}}
\nc{\starr}{\text{\dh}}
\nc{\Spectra}{\on{Spectra}}
\nc{\Crys}{\on{Crys}}
\nc{\oblv}{{\mathbf{oblv}}}
\nc{\ind}{{\mathbf{ind}}}
\nc{\CMaps}{{\mathcal Maps}}
\nc{\Maps}{\on{Maps}}
\nc{\bMaps}{\mathbf{Maps}}
\nc{\BMaps}{\ul{\on{Maps}}}
\nc{\Grid}{\on{Grid}}
\nc{\hGrid}{\on{Grid}^{\geq\,\on{dgnl}}}
\nc{\Diag}{\on{Diag}}
\nc{\bDelta}{\mathbf{\Delta}}
\nc{\tCateg}{(\infty\on{-2)-Cat}}
\nc{\ul}{\underline}
\nc{\Seg}{\on{Seq}}
\nc{\biSeg}{\on{bi-Seq}}
\nc{\triSeg}{\on{tri-Seq}}
\nc{\quadSeg}{\on{quad-Seq}}
\nc{\nSeg}{\on{n-Seq}}
\nc{\Segm}{\on{Seg}^{\on{mkd}}}
\nc{\fLm}{\fL^{\on{mkd}}}
\nc{\inftyCatm}{\inftyCat^{\on{mkd}}}
\nc{\Blocks}{\mathbf{Blocks}}
\nc{\Snakes}{\mathbf{Snakes}}
\nc{\bifL}{\on{bi-}\!\fL}
\nc{\Sets}{\on{Sets}}
\nc{\Ran}{\on{Ran}}
\nc{\Vect}{\on{Vect}}
\nc{\Shv}{\on{Shv}}
\nc{\unn}{\mathbf{union}}
\nc{\Spc}{\on{Spc}}
\nc{\ppart}{(\!(t)\!)}
\nc{\qqart}{[\![t]\!]}
\nc{\Dmod}{\on{D-mod}}
\nc{\cD}{\mathcal D}
\nc{\ocD}{\overset{\circ}{\cD}}
\nc{\sfp}{\mathsf{p}}
\nc{\sfq}{\mathsf{q}}
\nc{\DGCat}{\on{DGCat}}
\renc{\det}{\on{det}}
\begin{document}

\title[Parameterization of factorizable line bundles]
{Parameterization of factorizable line bundles \\ by $K$-theory and motivic cohomology}


\author{Dennis Gaitsgory}

\dedicatory{To Sasha Beilinson}

\date{\today}


\maketitle

\tableofcontents

\section*{Introduction}

\ssec{The context}

This paper grew out of \cite{GL}, where the main construction that we will perform here was stated erroneously.
Let us explain what the paper \cite{GL} aimed to do, and what the objective of the present one is.

\sssec{}

In \cite{GL} our objective was to set up the context for the \emph{metaplectic geometric Langlands theory}.

\medskip

Namely, given a connected reductive group $G$ and an algebraic curve $X$ (say, over a field $k$), 
it was proposed that a \emph{geometric metaplectic datum} for the pair $(G,X)$ should be understood as a \emph{factorizable gerbe}
on the affine Grassmannian $\Gr_G$ of $G$ over $X$. 

\medskip

Let us recall that, whatever the geometric Langlands theory is, it has to do with sheaves on various spaces 
(here a ``space" can mean a scheme, ind-scheme, stack or a general prestack) attached to
the pair $(G,X)$. Thus, when we say ``gerbe" we mean a gerbe banded by some group, which has to do with 
coefficients of our sheaves. 

\medskip

Let us say that our sheaf theory is that of $\ell$-adic sheaves. Then the group in question is
$(\ol\BQ_\ell^\times)_{\on{tors}}$, the group of roots of unity in $\ol\BQ_\ell$ of order prime to $\on{char}(k)$. 
Having a gerbe $\CG$ banded by $(\ol\BQ_\ell^\times)_{\on{tors}}$ on a prestack $\CY$, we can consider the
category $\on{Shv}_\ell(\CY)_\CG$, the stable $\infty$-category of $\ell$-adic sheaves on $\CY$, twisted by $\CG$. 

\medskip

In the metaplectic geometric Langlands theory we wish to have a procedure that attaches $(\ol\BQ_\ell^\times)_{\on{tors}}$-gerbes
to all the prestacks naturally associated with $(G,X)$ (such as $\Bun_G(X)$, the moduli stack of principal  $G$-bundles on $X$
if the latter is complete), in a consistent way. And it turns out that a factorizable gerbe on $\Gr_G$ is precisely a datum that gives rise 
to such a procedure. 

\medskip

Moreover, if $k$ is a finite field $\BF_q$, a geometric metaplectic datum gives rise to a metaplectic extension of $G(F_x)$,
where $x$ is a closed point of $X$, and $F_x$ is the corresponding local field. 

\sssec{}

Factorizable gerbes on the affine Grassmannian $\Gr_G$ form what is called a \emph{Picard 2-category}: a symmetric monoidal
2-category in which every object is invertible. 
Now, one of the key observations is that this Picard 2-category is relatively easy to describe explicitly (see \cite[Sect. 3]{GL}): 

\medskip

Namely, it is the category of \'etale 4-cocyles on $X\times BG$ (relative to the base point of $BG$) 
with coefficients in $(\ol\BQ_\ell^\times)_{\on{tors}}(1)$  
$$\tau^{\leq 4}\biggl(\Gamma_{\on{et}}(X\times B_{\on{et}}(G);X\times \on{pt},(\ol\BQ_\ell^\times)_{\on{tors}}(1))\biggr),$$
where $(1)$ is the Tate twist. In the above formula $B_{\on{et}}(G)$ is the usual algebraic
stack version of the classifying space, i.e., the sheafification of the prestack quotient $\on{pt}/G$ in the \'etale topology;
but we can replace it by either $B_{\on{Zar}}(G)$ (=sheafification of $\on{pt}/G$ in the Zariski topology) or 
$\on{pt}/G$ itself without changing the answer. 

\medskip 

In particular, the set of equivalence classes of such gerbes is 
$$H^4_{\on{et}}(X\times B_{\on{et}}(G);X\times \on{pt},(\ol\BQ_\ell^\times)_{\on{tors}}(1)),$$
etc. 

\sssec{}

Here is, however, where the current paper grew out from.
Other authors (see, e.g., \cite{We}) used a different object to parameterize metaplectic extensions. 

\medskip

Namely, they start with what we call
a \emph{Brylinski-Deligne datum}, which means by definition an extension of the sheaf of groups $G\times X$ over $X$ by $\CK_2$,
where the latter is the Zariski sheafification of the presheaf that attaches $S\in \on{Sch}_{/X}$ the group $K_2(S)$. 

\medskip

Brylinski-Deligne data form a \emph{Picard category} (i.e., a symmetric monoidal category in which every object is invertible).
We can tautologically rewrite it as the space of maps between prestacks 
\begin{equation} \label{e:BrDe}
\Maps_{\on{based}}(X\times B_{\on{Zar}}(G),B^2_{\on{Zar}}(\bK_2)),
\end{equation}
where $B^2_{\on{Zar}}(\bK_2)$
is the Zariski sheafification of the double classifying space of the commutative group prestack $\bK_2$ that attaches to $S$ the group $\CK_2(S)$. 
In the above formula we could have replaced $B_{\on{Zar}}(G)$ by prestack quotient $\on{pt}/G$ without changing the answer. The
subscript ``based" means that we are considering the space of maps that are required to send $X\times \on{pt}$ to the base-point of 
$B^2_{\on{Zar}}(\bK_2)$. 

\medskip

The paper \cite{BrDe} gives an explicit description of the Picard category \eqref{e:BrDe},
and, in particular, of its homotopy groups. We recall this description in \secref{ss:BrDe}.

\begin{rem} \label{r:Zar et}
We should emphasize that the Picard category $\Maps_{\on{based}}(B_{\on{Zar}}(G),B^2_{\on{Zar}}(\bK_2))$ is quite sensitive to 
the topologies we consider. 

\medskip

For example, if we replace replace $B_{\on{Zar}}(G)$ by $B_{\on{et}}(G)$, we will have a natural map 
$$\Maps_{\on{based}}(B_{\on{et}}(G),B^2_{\on{Zar}}(\bK_2))\to \Maps_{\on{based}}(B_{\on{Zar}}(G),B^2_{\on{Zar}}(\bK_2)),$$
which at the level of $\pi_0$ is an injection with finite cokernel, see Remark \ref{r:final}. This is essentially taken from \cite{To}. 

\medskip

If we replace $B^2_{\on{Zar}}(\bK_2)$ by $B^2_{\on{et}}(\bK_2)$, then we will have the (tautological) identification
$$\Maps_{\on{based}}(B_{\on{et}}(G),B^2_{\on{et}}(\bK_2))\simeq \Maps_{\on{based}}(B_{\on{Zar}}(G),B^2_{\on{et}}(\bK_2)),$$
while the latter is isomorphic (at least after inverting $\on{char}(k)$) with its rationalization
$$\Maps_{\on{based}}(B_{\on{Zar}}(G),B^2_{\on{et}}(\bK_2))\otimes \BQ$$
(indeed, it follows from Suslin's rigidity that $B^2_{\on{et}}(\bK_2)$ is uniquely $n$-divisible for any integer $n$ coprime with $\on{char}(k)$), 
and the latter identifies also with 
$$\Maps_{\on{based}}(B_{\on{Zar}}(G),B^2_{\on{Zar}}(\bK_2))\otimes \BQ,$$
due to the fact that $\CK_2\otimes \BQ$ satisfies \'etale descent. 

\medskip

That said, we shall see that if we replace $\CK_2$ by motivic cohomology, the answer is less sensitive to the choice of topology,
see Remark \ref{r:mot comp} below. This is essentially taken from \cite{EKLV}.

\end{rem}

\sssec{}

The natural question is, therefore, what is the relation between the Picard 2-category
$$\on{FactGerbe}_{(\ol\BQ_\ell^\times)_{\on{tors}}}(\Gr_G) \simeq 
\tau^{\leq 4}\biggl(\Gamma_{\on{et}}(X\times B_{\on{Zar}}(G);X\times \on{pt},(\ol\BQ_\ell^\times)_{\on{tors}}(1))\biggr)$$
and the Picard category 
$$\Maps_{\on{based}}(X\times B_{\on{Zar}}(G),B^2_{\on{Zar}}(\bK_2)).$$

Ideally, we would like to see that the construction of metaplectic covers strarting from a Brylinski-Deligne datum
passes via a geometric metaplectic datum. And this is what the present paper is about. 

\sssec{}

We introduce the notion of \emph{factorizable line bundle} on $\Gr_G$. The totality of such form a Picard 
category, denoted $\on{FactLine}(\Gr_G)$. 

\medskip

The main focus of this paper is the construction of a functor of Picard categories
\begin{equation} \label{e:main}
\Maps_{\on{based}}(X\times B_{\on{Zar}}(G),B^2_{\on{Zar}}(\bK_2))\to \on{FactLine}(\Gr_G).
\end{equation} 

It is this functor, whose construction in \cite{GL} had an error. Moreover, in \conjref{c:main} we propose
that \eqref{e:main} is actually an equivalence. 

\sssec{}

Here is how the functor \eqref{e:main} allows to relate Brylinski-Deligne data and factorizable gerbes. Fix an integer $n$ 
invertible in $k$. The Kummer sequence
$$0\to \mu(n)\to \CO^\times \to \CO^\times\to 0$$
defines a functor map 
$$\on{FactLine}(\Gr_G)\to \on{FactGerbe}_{\mu(n)}(\Gr_G).$$

\medskip

Furthermore, in \secref{ss:mot} we construct a map
\begin{equation}  \label{e:K to mu intr}
\Maps_{\on{based}}(X\times B_{\on{Zar}}(G),B^2_{\on{Zar}}(\bK_2)) \to 
\tau^{\leq 4}\biggl(\Gamma_{\on{et}}(X\times B_{\on{Zar}}(G);X\times \on{pt},\mu(n)(1))\biggr),
\end{equation}
where we note that $\mu(n)(1)\simeq \mu(n)^{\otimes 2}$. 

\medskip

We propose a conjecture (which does not seem far-fetched at all) to the effect that the diagram
$$
\CD
\Maps_{\on{based}}(X\times B_{\on{Zar}}(G),B^2_{\on{Zar}}(\bK_2)) @>>> 
\tau^{\leq 4}\biggl(\Gamma_{\on{et}}(X\times B_{\on{Zar}}(G);X\times \on{pt},\mu(n)(1)\biggr)  \\
@VVV   @VVV  \\
\on{FactLine}(\Gr_G)  @>>>  \on{FactGerbe}_{\mu(n)}(\Gr_G) 
\endCD
$$
commutes. 

\sssec{}  \label{sss:integer N}

That said, our construction of the functor \eqref{e:main} has a small caveat. Namely, we need impose the condition that $\on{char}(k)$
is comprime with a certain integer, denoted $N$. Here is what this integer is: 

\medskip

Let $G_\BC$ be the complex Lie group corresponding to $G$.
We consider $G_\BC$ as a topological group, and consider its (topological) classifying space $BG_{\on{top}}$. Consider the abelian group
$$H^4(BG_{\on{top}},\BZ).$$
Inside we can consider the subgroup spanned by Chern classes of representations, i.e., the span of the images 
of the maps
$$H^4(BGL(V)_{\on{top}},\BZ)\to H^4(BG_{\on{top}},\BZ)$$
for all finite-dimensional representations $V$ of $G$.

\medskip

When tensored with $\BQ$, this subgroup becomes all of $H^4(BG_{\on{top}},\BZ)$. Hence, it has a finite index.
We let $N$ be smallest integer that annihilates the quotient.

\medskip

This integer is tautologically equal to $1$ for $G=GL_n$. One can show that it is also equal to $1$ for $G=Sp(2n)$.
But it is \emph{not} equal to $1$ for semi-simple simply-connected groups of other types. 

\ssec{Structure of the paper}

\sssec{}

As was explained above, the main goal of this paper is to construct the map \eqref{e:main}. This is supposed to be very simple:

\medskip

By unwinding the definitions, what we need to do is the following. Let $S$ be an affine scheme and let $I$ be a finite set
of maps $S\to X$. Let $U_I\subset S\times X$ be the complement to the union of their graphs. Suppose we are given a
2-cocycle on $S\times X$ with coefficients in $\CK_2$, equipped with a trivialization on $U_I$. To this datum wish to
attach its \emph{residue} or \emph{trace}, which would be a 1-cocyle on $S$ with coefficients in $\CK_1\simeq \CO_S^\times$,
i.e., a line bundle on $S$.

\medskip

The problem is, however, that we were not able to construct such a residue map:
\begin{equation} \label{e:res}
\tau^{\leq 0}\biggl(\Gamma(S\times X;U_I,\CK_2[2])\biggr) \to \Gamma(S,\CK_1)[1].
\end{equation} 

\medskip

Our inability to do so is due to the lack of control of K-theory on schemes that are non-regular (we need to allow $S$ non-regular, because it
will typically be a test-scheme mapping to $\Gr_G$). This is where the mistake in \cite{GL} was. 

\medskip

What we can do is to construct some particular cases of \eqref{e:res}, specifically, when $S$ is smooth or a fat point 
(=spectrum of a local Artinian ring). This is done in \secref{s:particular}. 

\medskip

The combination of these two cases will ``almost" give us the desired map \eqref{e:main},
but not quite. 

\sssec{}

We will need another variant of \eqref{e:res}, where instead of $\CK_2$ we use the 2-connective truncation 
of the \emph{K-theory spectrum}:
\begin{equation} \label{e:res full}
\Gamma(S\times X;U_I,\tau^{\leq -2}(\CK))\to \Gamma(S,\CK_1)[1].
\end{equation}  

\medskip

The reason we could construct the map \eqref{e:res full} (as opposed to \eqref{e:res}) is that the full K-theory
spectrum has better functoriality properties than the individual sheaves $\CK_i$, see \secref{s:spectrum}. 

\medskip

The map \eqref{e:res full} still does not give rise (at least, not in a way that we could see) to a map \eqref{e:res},
because we are not able to prove that it vanishes on $\Gamma(S\times X;U_I,\tau^{\leq -3}(\CK))$.  

\medskip

However, playing the map \eqref{e:res full} against the cases in which we could construct the map
\eqref{e:res}, we will able to carry out the construction of the map \eqref{e:main}. This is done in
Sects. \ref{s:construction abs} and \ref{s:construction gen}.

\sssec{} 

In order to use the map \eqref{e:res full}, we will have to lift a Brylinski-Deligne datum, which is a based map
$$\kappa:X\times B_{\on{Zar}}(G)\to B^2_{\on{Zar}}(\bK_2),$$ to a map
$$\kappa':X\times B_{\on{Zar}}(G)\to \bK_{\geq 2}.$$

Here $\bK_{\geq 2}$ is the prestack defined by 
$$\Maps(S,\bK_{\geq 2}):=\Gamma(S,\tau^{\leq -2}(\CK)), \quad S\in \Sch^{\on{aff}}.$$

We could not prove that such a lift always exists, and this is where the integer $N$ of \secref{sss:integer N}
enters the game: 

\medskip

We show that $N\cdot \kappa$ can always be lifted, see \thmref{t:raising to power}. The
fact that it is sufficient to consider $N\cdot \kappa$ instead of $\kappa$ itself is the ``raising to the power"
trick, suggested to us by A.~Beilinson. 

\sssec{}

Finally, in \secref{s:mot}, we construct the map \eqref{e:K to mu intr}. The construction is indirect, and it passes 
by comparing the Picard category $\Maps_{\on{based}}(X\times B_{\on{Zar}}(G),B^2_{\on{Zar}}(\bK_2))$ with its
motivic cohomology counterpart (this idea was also suggested to us by A.~Beilinson). 

\medskip

Namely, we prove (reproducing an argument from \cite{EKLV}) that the natural map
$$\Gamma(X\times B_{\on{Zar}}(G),\BZ_{\on{mot}}(2)[2])\to  
\CK_2(X\times B_{\on{Zar}}(G))$$
induces an isomorphism on the $\tau^{\geq 0}$ truncations. We then use the composite
\begin{multline*}
\Gamma(X\times B_{\on{Zar}}(G);X\times \on{pt},\BZ_{\on{mot}}(2)[4])\to \Gamma_{\on{et}}(X\times B_{\on{Zar}}(G);X\times \on{pt},\BZ_{\on{mot}}(2)[4])\to \\
\to \Gamma_{\on{et}}(X\times B_{\on{Zar}}(G);X\times \on{pt},\BZ/n(2)[4])\simeq 
\Gamma_{\on{et}}(X\times B_{\on{Zar}}(G);X\times \on{pt},\mu_n^{\otimes 2}[4])
\end{multline*}
to obtain the map \eqref{e:K to mu intr}.

\medskip

We also reproduce the proof of another result of \cite{EKLV}, which says that the map
$$\Gamma(X\times B_{\on{Zar}}(G);X\times \on{pt},\BZ_{\on{mot}}(2))\to \Gamma_{\on{et}}(X\times B_{\on{Zar}}(G);X\times \on{pt},\BZ_{\on{mot}}(2))$$
induces an isomorphism on the $\tau^{\leq 4}$ truncations. 

\begin{rem}  \label{r:mot comp}
To summarize, we obtain that the equivalent Picard 2-categories
\begin{multline*} 
\tau^{\geq -2,\leq 0}\biggl(\Gamma(X\times B_{\on{Zar}}(G);X\times \on{pt},\BZ_{\on{mot}}(2)[4])\biggr) \simeq 
\tau^{\geq -2,\leq 0}\biggl(\Gamma_{\on{et}}(X\times B_{\on{Zar}}(G);X\times \on{pt},\BZ_{\on{mot}}(2)[4])\biggr) \simeq \\
\simeq \tau^{\geq -2,\leq 0}\biggl(\Gamma_{\on{et}}(X\times B_{\on{et}}(G);X\times \on{pt},\BZ_{\on{mot}}(2)[4])\biggr)
\simeq \tau^{\geq -2,\leq 0}\biggl(\Gamma_{\on{et}}(X\times (BG)^{\on{appr}};X\times \on{pt},\BZ_{\on{mot}}(2)[4])\biggr)
\end{multline*}
(here $(BG)^{\on{appr}}$ is the ``scheme approximation" of $B_{\on{et}}(G)$ from \secref{sss:E and V}), which are also 
equivalent to our 
$$\Maps_{\on{based}}(X\times B_{\on{Zar}}(G),B^2_{\on{Zar}}(\bK_2)).$$

\medskip

By contrast, the Picard 2-categories
$$\tau^{\geq -2,\leq 0}\biggl(\Gamma(X\times B_{\on{et}}(G);X\times \on{pt},\BZ_{\on{mot}}(2)[4])\biggr) \simeq 
\tau^{\geq -2,\leq 0}\biggl(\Gamma(X\times (BG)^{\on{appr}};X\times \on{pt},\BZ_{\on{mot}}(2)[4])\biggr)$$
and 
$$\Maps_{\on{based}}(X\times B_{\on{et}}(G),B^2_{\on{Zar}}(\bK_2)) \simeq \Maps_{\on{based}}(X\times (BG)^{\on{appr}},B^2_{\on{Zar}}(\bK_2)),$$
which are equivalent to \emph{each other}, but they are, in general, \emph{different from the one above}, see Remarks \ref{r:Zar et} and 
\ref{r:final}. 
\end{rem}

\ssec{Acknowledgements}

Special thanks are due to Sasha Beilinson, who suggested to me the key ideas to overcome the obstacles in 
carrying out the construction in this paper (to use the full K-theory spectrum, the ``raising to the power" trick, 
and the comparison with motivic cohomology). 
No less importantly, I would like to thank him for his patience over the years
in answering my persistent questions, along with teaching me the basics of motivic cohomology. 

\medskip

I am also very grateful to S.~Bloch, D.~Clausen, E.~Elmanto, H.~Esnault, M.~Hopkins, M.~Kerr, J.~Lurie, A.~Mathew and 
C.~Weibel for patiently answering my questions on K-theory and motivic cohomology. 

\medskip

Finally, I would like to thank my graduate student Y.~Zhao for catching the original mistake in \cite{GL}, as well as for
numerous subsequent discussions on the subject.  I am additionally grateful to him and J.~Tao for taking up (what
appears in the present paper as) Conjecture 6.1.2  as a project. 

\medskip

The author is supported by NSF grant DMS-1063470. He has also received support from ERC grant 669655.

\section{Some background, notation and conventions}

\ssec{The algebro-geometric setting}

\sssec{}

We will be working over a ground field $k$, assumed algebraically closed. We denote 
$$\on{pt}:=\Spec(k).$$

\medskip

We will denote by $\Sch^{\on{aff}}\subset \Sch$ the category of schemes (resp., affine schemes) over $k$.
We denote $\on{pt}:=\Spec(k)$. 

\medskip

\sssec{Higher category theory}

In this paper we freely use the language of higher category theory and higher algebra as developed in \cite{Lu1,Lu2}. 

\medskip

We denote by $\on{Spc}$ the $\infty$-category of spaces. Its objects can be thought of as higher groupoids. 

\medskip

For an $\infty$-category $\bC$ and objects $\bc_1,\bc_2\in \bC$, we let $\Maps(\bc_1,\bc_2)\in \on{Spc}$ denote the
space of maps between them. If $\bC$ is classical (i.e., an ordinary category), then $\Maps(\bc_1,\bc_2)$ are \emph{sets}
(=discrete spaces), and we will also write $\Hom(\bc_1,\bc_2)$. 

\medskip

Given a stable $\infty$-category $\bC$ with a t-structure, we will denote by $\tau^{\geq k},\tau^{\leq k}$, etc., the corresponding
truncation functors. We will denote by 
$$H^k(-):\bC\to \bC^\heartsuit$$ 
the functor of $k$-th cohomology with respect to this t-structure. 

\medskip

Let $\on{Sptr}$ denote the stable $\infty$-category of spectra. It comes equipped with a natural t-structure. 
For a spectrum $\CS$, we will also use the notation $\CS_{\geq k}:=\tau^{\leq -k}(\CS)$ (this is the $k$-connective truncation of $\CS$),
and $\CS_{\leq -k}:=\tau^{\geq k}(\CS)$ (this is the $k$-th Postnikov truncation of $\CS$). 

\sssec{Prestacks}

By \emph{prestack} we shall mean an arbitrary (accessible) functor 
$$(\Sch^{\on{aff}})^{\on{op}}\to \on{Spc}.$$

The $\infty$-category of prestacks is denoted $\on{PreStk}$. 

\medskip

All other algebro-geometric objects that we will encounter (schemes, ind-schemes, algebraic stacks)
are particular cases of prestacks (which means that they are determined by their Grothendieck functors
of points evaluated on affine schemes). 

\medskip

Given prestack $\CY_1$ and $\CY_2$, following our conventions, we will denote by
$$\Maps(\CY_1,\CY_2)\in \Spc$$
the space of maps from $\CY_1$ to $\CY_2$. 

\sssec{}

The assignement 
$$S\mapsto \QCoh(S), \quad (S_1\overset{f}\to S_2) \mapsto \QCoh(S_2) \overset{f^*}\to \QCoh(S_1)$$
is a functor 
$$(\Sch^{\on{aff}})^{\on{op}}\to \infty\on{-Cat}.$$

Applying the procedure of \emph{right Kan extension} along
$$(\Sch^{\on{aff}})^{\on{op}}\hookrightarrow (\on{PreStk})^{\on{op}}$$
we obtain a functor
$$\QCoh:(\on{PreStk})^{\on{op}}\to \infty\on{-Cat}.$$

In particular, for $\CY\in \on{PreStk}$ we have a well-defined category $\QCoh(\CY)$, and for a 
map $f:\CY_1\to \CY_2$ we have the pullback functor $f^*:\QCoh(\CY_2)\to \QCoh(\CY_1)$.

\medskip

Thus, one can talk about vector (resp., line) bundles on a prestack, etc: an object $\CF\in \QCoh(\CY)$
is a vector (resp., line) bundle if for any $S\to \Sch^{\on{aff}}|_{/\CY}$, the resulting object $\CF_S\in \QCoh(S)$
is a vector (resp., line) bundle.  

\sssec{}

Let $G$ be a connected reductive group over $k$.

\medskip

We will consider several different versions of the classifying space of $G$. We let $\on{pt}/G$ denote the
prestack quotient, i.e.,
$$\Maps(S,\on{pt}/G):=B(\Maps(S,G)).$$

We let $B_{\on{Zar}}(G)$ the Zariski sheafification of $\on{pt}/G$. 

\medskip

We let $B_{\on{et}}(G)$ denote the \'etale sheafification of $\on{pt}/G$; this is the usual \emph{algebraic stack}
version of the classifying space of $G$. 

\sssec{}

We let $X$ be a smooth connected algebraic curve over $k$. If $X$ is complete, we let $\Bun_G(X)$ denote the
stack of principal $G$-bundles on $X$:
$$\Maps(S,\Bun_G(X)):=\Maps(S\times X,B_{\on{et}}(G)).$$

\sssec{The Ran space}

The prestack of particular importance for us is the Ran space (say, of an algebraic curve $X$), denoted $\Ran(X)$.  By definition
$\Maps(S,\Ran(X))$ is the set (=discrete space) whose elements are finite non-empty subsets in $\Hom(S,X)$. 

\medskip

We can write $\Ran(X)$
explicitly as a colimit of schemes:
\begin{equation} \label{e:Ran}
\Ran(X)\simeq \underset{I}{\on{colim}}\, X^I,
\end{equation} 
where the colimit is taken over the category (opposite to that) of finite non-empty sets and surjective maps.

\medskip

The operation of union of finite sets defines a map 
$$\Ran(X)\times \Ran(X)\to \Ran(X).$$

\medskip

In what follows we will also need the open substack
$$(\Ran(X)\times \Ran(X))_{\on{disj}}\subset \Ran(X)\times \Ran(X)$$
defined as follows: 

\medskip

And $S$ point $(I_1,I_2)\in \Maps(S,\Ran(X)\times \Ran(X))=\Maps(S,\Ran(X))\times \Maps(S,\Ran(X))$ belongs to 
$(\Ran(X)\times \Ran(X))_{\on{disj}}$ if for any $i_1\in I_1$ and $i_2\in I_2$, the corresponding two maps
$$S\rightrightarrows X$$
have non-intersecting graphs (equivalently, for any $k$-point of $S$, the resulting two $k$-points of $X$ are distinct). 

\ssec{The affine Grassmannian}

\sssec{}

Let $I$ be a finite set. We define the affine Grassmanian $\Gr_{G,I}$ to be the ind-scheme that classifies the
data of triples
$$\Maps(S,\Gr_{G,I}):=(x_I,\CP_G,\alpha),$$
where

\begin{itemize}

\item $x_I\in \Maps(S,X)^I$;

\item $\CP_G$ is a principle $G$-bundle on $S\times X$;

\item $\alpha$ is a trivialization of
$\CP_G$ over the complement to the union of the graphs of the maps that comprise $x_I$. 

\end{itemize} 

\begin{rem} \label{r:Gr class}

Note that there is a potential ambiguity in the above definition. Namely, we could perceive $\Gr_{G,I}$ as
a functor on classical affine schemes or derived affine schemes. A priori, we obtain two \emph{different}
algebro-geometric objects, and the notions of line bundles are different (undoubtedly, the derived one is
the a priori better notion). 

\medskip

However, the above two versions are actually isomorphic (i.e., the derived version is \emph{classical} as
a prestack, up to Zariski sheafification). 
This is proved in \cite[Theorem 9.3.4]{GR}. This essentially follows from the fact that $\Gr_{G,I}$ is formally smooth.

\end{rem} 

\sssec{}

Let $I=I_1\sqcup I_2$, so that $X^I\simeq X^{I_1}\times X^{I_2}$. Let 
$$(X^{I_1}\times X^{I_2})_{\on{disj}}\subset X^{I_1}\times X^{I_2}$$
be the open subset corresponding to the condition that for any $i_1\in I_1$ and $i_2\in I_2$, the corresponding two maps
$$S\rightrightarrows X$$
have non-intersecting graphs.

\medskip

A basic feature of the affine Grassmannian is it \emph{factorization structure}, i.e., a system of isomorphisms:
\begin{equation} \label{e:fact Gr}
(\Gr_{G,I_1}\times \Gr_{G,I_2})\underset{X^{I_1}\times X^{I_2}}\times 
(X^{I_1}\times X^{I_2})_{\on{disj}} \simeq \Gr_{G,I}\underset{X^I}\times (X^{I_1}\times X^{I_2})_{\on{disj}}.
\end{equation} 

These isomorphisms are associative with respect to further partitions, in the natural sense. 

\sssec{}

We introduce the Ran version of the affine Grassmannian $\Gr_{G,\Ran}$ as follows:
$$\Maps(S,\Gr_{G,\Ran}):=(I,\CP_G,\alpha),$$
where 
\begin{itemize}

\item $I$ is a finite non-empty set of maps $S\to X$; 

\item $\CP_G$ is a principle $G$-bundle on $S\times X$;

\item $\alpha$ is a trivialization of
$\CP_G$ over the complement to the union of the graphs of the maps that comprise $I$. 

\end{itemize} 

We have the tautological map $\Gr_{G,\Ran}\to \Ran(X)$. 

\medskip

We can write 
$$\Gr_{G,\Ran}\simeq \underset{I}{\on{colim}}\, \Gr_{G,I},$$
where the colimit is taken over the same index category as for $\Ran(X)$ itself, see \eqref{e:Ran}. 

\medskip

The factorization structure on $\Gr_{R,\Ran}$ is encoded by the isomorphism 
\begin{equation} \label{e:fact Gr Ran}
(\Gr_{G,\Ran}\times \Gr_{G,\Ran})\underset{\Ran\times \Ran}\times (\Ran(X)\times \Ran(X))_{\on{disj}}\simeq
\Gr_{G,\Ran}\underset{\Ran}\times (\Ran(X)\times \Ran(X))_{\on{disj}},
\end{equation} 
which is associative in the natural sense. 

\sssec{}

The main object of study in this paper are \emph{factorizable line bundles} on $\Gr_{G,\Ran}$. By definition,
these are line bundles $\CL_{\Gr}$ on $\Gr_{G,\Ran}$, equipped with an isomorphism
$$\CL_{\Gr}\boxtimes \CL_{\Gr}|_{(\Gr_{G,\Ran}\times \Gr_{G,\Ran})\underset{\Ran\times \Ran}\times (\Ran(X)\times \Ran(X))_{\on{disj}}}
\simeq \CL_{\Gr}|_{\Gr_{G,\Ran}\underset{\Ran}\times (\Ran(X)\times \Ran(X))_{\on{disj}}}$$
(where we identify the two prestacks over which the isomorphism takes place via \eqref{e:fact Gr Ran}), and such that
the natural compatibility condition holds. 

\medskip

Equivalently, a factorizable line bundle $\CL_{\Gr}$ is an assignment for every finite non-empty set $I$ of a line bundle
$\CL^I_{\Gr}$ on $\Gr_{G,I}$, equipped with (a transitive system of) identifications
\begin{equation} \label{e:line bundle Gr}
\CL^I_{\Gr}|_{\Gr_{G,J}}\simeq \CL^J_{\Gr} \text{ for every surjection } I\twoheadrightarrow J
\end{equation} 
(this defines a line bundle on $\Gr_{G,\Ran}$), and the factorization isomorphisms
\begin{equation} \label{e:fact line bundle}
\CL^{I_1}_{\Gr}\boxtimes \CL^{I_2}_{\Gr}|_{(\Gr_{G,I_1}\times \Gr_{G,I_2})\underset{X^{I_1}\times X^{I_2}}\times 
(X^{I_1}\times X^{I_2})_{\on{disj}}}\simeq \CL^I_{\Gr}|_{\Gr_{G,I}\underset{X^I}\times (X^{I_1}\times X^{I_2})_{\on{disj}}},
\end{equation} 
(where we identify the two prestacks over which the isomorphism takes place via \eqref{e:fact Gr}), such that the identifications
\eqref{e:fact line bundle} are associative and compatible with \eqref{e:line bundle Gr} in the natural sense. 

\medskip

We denote the Picard groupoid of factorizable line bundles on $\Gr_{G,\Ran}$ by 
$$\on{FactLine}(\Gr_{G,\Ran}).$$

\ssec{Presheaves and sheaves}  \label{ss:sheaves}

\sssec{}

For a scheme $Y$ we will consider the stable $\infty$-category $\on{PreShv}(Y,\on{Ab})$ of presheaves
of chain complexes abelian groups on $Y$.

\medskip

Let the symbol ``?" denote either Zariski or \'etale topology on $Y$. Then $\on{PreShv}(Y,\on{Ab})$
contains a full subcategory 
$$\on{Shv}_?(Y,\on{Ab})\subset \on{PreShv}(Y,\on{Ab})$$
of objects that satisfy ?-descent. 

\medskip

This inclusion has a left adjoint, the sheafification functor. We will denote it by $\CF\mapsto \CF_?$. By a slight abuse
of notation, for $\CF\in \on{PreShv}(Y,\on{Ab})$ and an object $U$ of the ?-site of $Y$, we will sometimes write 
$\Gamma_?(U,\CF)$ for $\Gamma(U,\CF_?)$. 

\sssec{}

The category $\on{PreShv}(Y,\on{Ab})$ has a natural t-structure, whose heart $\on{PreShv}(Y,\on{Ab})^\heartsuit$
is the abelian category of presheaves of abelian groups. The category $\on{Shv}_?(Y,\on{Ab})$ acquires a unique
t-structure for which the sheafification functor is t-exact. 

\medskip

Note, however, that the forgetful functor 
\begin{equation} \label{e:from sheaves to presheaves}
\oblv_?:\on{Shv}_?(Y,\on{Ab})\to \on{PreShv}(Y,\on{Ab})
\end{equation} 
is \emph{not} t-exact, but only left t-exact. 

\sssec{}

The following seems to be a ubiquitous confusion: 

\medskip

Let $\CF$ be an object of 
$\on{Shv}_?(Y,\on{Ab})^\heartsuit$.  We can associate with it two different objects of $\on{PreShv}(Y,\on{Ab})$. One is 
$\oblv_?(\CF)$.  The other is $$\CF^0:=H^0(\oblv_?(\CF))\in \on{PreShv}(Y,\on{Ab})^\heartsuit.$$ The functor $\CF\mapsto \CF^0$
$$\on{Shv}_?(Y,\on{Ab})^\heartsuit\to \on{PreShv}(Y,\on{Ab})^\heartsuit$$
is the ``usual" (non right-exact) embedding of the abelian category of sheaves into the abelian category of presheaves.
The natural map $\CF^0\to \oblv_?(\CF)$ induces an isomorphism of the sheafifications
$$\CF\simeq (\CF^0)_?\simeq (\oblv_?(\CF))_?.$$

\medskip

Note also that 
$$\Gamma(U,\CF):=\Gamma(U,\oblv_?(\CF))\simeq \Gamma_?(U,\CF^0),$$
where the latter is known as the ``sheaf cohomology" of $\CF^0$ over $U$. 

\sssec{}

For a map $f:Y_1\to Y_2$ between schemes we will denote by 
$$f_*:\on{PreShv}(Y_1,\on{Ab})\to \on{PreShv}(Y_2,\on{Ab})$$
the corresponding direct image functor. 

\medskip

This functor sends $\on{Shv}_?(Y_1,\on{Ab})\to \on{Shv}_?(Y_2,\on{Ab})$. We will denote the resulting functor
by the same symbol $f_*$ (and not $Rf_*$, which may be more traditional). 

\ssec{Different ways to view the $K$-theory functor}  \label{ss:K}

\sssec{}

First off, for an integer $n$, we consider the presheaf $K_n$ on the category of affine schemes
$$S\mapsto K_n(S);$$
this is a presheaf of abelian groups. 

\sssec{}

We can sheafify this presheaf on the big Zariski site, and obtain a (pre)sheaf of chain complexes of abelian
groups, denoted $\CK_n$.  Applying the procedure of right Kan extension along
$$(\on{Sch}^{\on{aff}})^{\on{op}}\hookrightarrow \on{PreStk}^{\on{op}}$$
we can evaluate $\CK_n$ on any prestack
$$\CY\mapsto \CK_n(\CY).$$

\sssec{}

The restriction of $\CK_n$ to the small Zariski site of an individual scheme $Y$ will be denoted by 
$$\CK_n|_Y\in \on{Shv}_{\on{Zar}}(Y,\on{Ab})^\heartsuit.$$

We have, by definition,
$$\Gamma(Y,\CK_n|_Y)=\CK_n(Y).$$

Note that $\CK_n|_Y$ is canonically isomorphic to the Zariski sheafification of the restriction of $K_n$ 
to the small Zariski site of $Y$. 

\sssec{}

The functor
$$S\mapsto H^0(\CK_n(S))$$
is an abelian group-object in the category of \emph{discrete} prestacks; we will denote it by $\bK_n$. For
a prestack $\CY$, we have
$$
\pi_i(\Maps(\CY,\bK_n))=
\begin{cases}
& H^0(\CK_n(\CY)) \text{ for } i=0; \\
& 0 \text{ for } i>0.
\end{cases}
$$

\medskip 

For $j\geq 0$, we can consider the $i$-fold classifying space of $\bK_n$, sheafified in Zariski topology,
denoted $B^j_{\on{Zar}}(\bK_n)$. This is a prestack that takes values in the category of 
$E_\infty$-group objects in the category of prestacks. For a prestack $\CY$, we have
$$
\pi_i(\Maps(\CY,B^j_{\on{Zar}}(\bK_n)))=
\begin{cases}
& H^{j-i}(\CK_n(\CY)) \text{ for } i\leq j; \\
& 0 \text{ for } i>j.
\end{cases}
$$

\section{Construction in particular cases}  \label{s:particular}

\ssec{What we wish to construct}  \label{ss:wish}

\sssec{}

We interpret the Brylisnki-Deligne datum as a \emph{based} map
\begin{equation} \label{e:datum}
\kappa:X\times B_{\on{Zar}}(G) \to B^2_{\on{Zar}}(\bK_2).
\end{equation}

\medskip

From this datum we wish to produce a factorization line bundle $\CL_\Gr$ on the affine Grassmannian
$\Gr_{G,\Ran}$. In other words, given an affine test-scheme $S$ and a triple
$$(I,\CP_G,\alpha),$$
where 

\begin{itemize}

\item $I$ is a finite set of maps $S\to G$;

\item $\CP_G$ is a $G$-bundle on $S\times X$;

\item $\alpha$ is a trivialization of $\CP_G$ on the open $U_I\subset S\times X$ equal to the complement of $Z_I$ --
the union of the graphs of the maps that comprise $I$,

\end{itemize}
we wish to construct a line bundle $\CL_{I,\CP_G,\alpha}$ on $S$. This construction is supposed to be functorial in $S$ in the natural
sense. 

\sssec{} 

The factorization structure on the collection
\begin{equation} \label{e:output}
(I,\CP_G,\alpha)\rightsquigarrow \CL_{I,\CP_G,\alpha}
\end{equation}
means the following:

\medskip

Let us be given a partition $I=I^1\sqcup I^2$, such that for any $i^1\in I^1$ and $i^2\in I^2$, the corresponding two maps
$$S\rightrightarrows X$$
have non-intersecting graphs. The factorization structure on the affine Grassmannian means that the datum of $(I,\CP_G,\alpha)$
with a given $I$ is equivalent to the data of $(I^1,\CP^1_G,\alpha^1)$ and $(I^2,\CP^2_G,\alpha^2)$.

\medskip

In this case, we wish to have an isomorphism of line bundles on $S$:
$$\CL_{I,\CP_G,\alpha}\simeq \CL_{I^1,\CP^1_G,\alpha^1}\otimes \CL_{I^2,\CP^2_G,\alpha^2}.$$

\medskip

These isomorphisms are supposed to be compatible with further partitions in the natural sense. 

\ssec{The initial attempt}

\sssec{}

The first glitch comes from the fact that the datum of $\CP_G$ does \emph{not} define a map $S\times X\to B_{\on{Zar}}(G)$,
but rather a map $S\times X\to B_{\on{et}}(G)$. However, this is easily overcome using \cite[Theorem 2]{DS}: namely, $\CP_G$
\emph{does} give a map $S\times X\to B_{\on{Zar}}(G)$ after passing to an \'etale cover of $S$. 

\medskip

Composing with the map $\kappa$ of \eqref{e:datum}, we obtain a map
$$S\times X\to B^2_{\on{Zar}}(\bK_2)$$
that vanishes when restricted to $U_I$. 

\sssec{}  \label{sss:kappa S}

Let us denote by 
$$U_I\overset{j}\to S\times X\overset{\iota}\leftarrow Z_I$$
the corresponding maps. 

\medskip

Thus, we obtain a section $\kappa_S$ of 
$$\iota^!(\CK_2|_{S\times X})[2]\in \on{Shv}(Z_I,\on{Ab}).$$

\medskip

Consider the projection
$$\pi:S\times X\to S.$$

If we had a residue map
\begin{equation} \label{e:residue}
(\pi\circ \iota)_*(\iota^!(\CK_2|_{S\times X}))[2]\to \CK_1|_S[1],
\end{equation} 
then from $\kappa_S$ we would obtain a section of 
$$\CK_1|_S[1] \simeq \CO_S^\times[1],$$
i.e., a line bundle on $S$. 

\sssec{}

The problem is, however, that we were \emph{not} able to define the map \eqref{e:residue} in general.

\medskip

In the rest of this section we will explain two particular cases in which we can construct 
such a map.  In \secref{s:spectrum} we will construct a variant of the map \eqref{e:residue},
where instead of $K_2$ we use the 2-truncated K-theory spectrum. In Sects \ref{s:construction abs} 
and \ref{s:construction gen} we will combine 
all these particular cases to produce the desired system of 
line bundles \eqref{e:output}. 

\ssec{The smooth case}   \label{ss:smooth}

Let us assume that $S$ is smooth. In this case, the map \eqref{e:residue} is easy to construct, using Gersten resolution.

\sssec{}  \label{sss:Gersten}

Let us recall that for a regular scheme $Y$, the object 
$$\CK_n|_Y\in \on{Shv}_{\on{Zar}}(Y,\on{Ab})$$ can be represented by a complex (called the Gersten resolution) 
\begin{equation} \label{e:Gersten}
\on{C}^\cdot_n:=\on{C}^0_n\to \on{C}^1_n\to...\to \on{C}^n_n,
\end{equation} 
where $$\on{C}^i_n\simeq \underset{Y_i}\oplus\, (j_{Y_i})_*(K_{n-i}(\eta_{Y_i})),$$
where the direct sum is taken over irreducible subvarieties of $Y$ of condimension $i$ and   
$$\eta_{Y_i}\overset{j_{Y_i}}\hookrightarrow Y$$
denotes the inclusion of the generic point. 

\sssec{}

Thus, we obtain that in the situation of \secref{sss:kappa S}, the object $\iota^!(\CK_2|_{S\times X})[1]$
is represented by the complex
$$\underset{S'}\oplus\, (j_{S'})_*(\CO^\times_{\eta_{S'}}) \to \underset{v'}\oplus \,\BZ,$$
where $S'$ runs over the set of irreducible components of $Z_I$ and $v'$ run over the set of points of codimension $1$ in $Z_I$.

\medskip

Note that each $S'$ as above is finite over $S$. From here we obtain that the norm map defines a map of complexes:
\begin{equation} \label{e:kappa S smooth local}
\CD
\underset{S'}\oplus\, (j_{S'})_*(\CO^\times_{\eta_{S'}}) @>>>   \underset{v'}\oplus \,\BZ   \\
@VVV   @VVV  \\
\CO^\times_{\eta_{S}} @>>>  \underset{v}\oplus \,\BZ,
\endCD
\end{equation} 
where $\eta_S$ is the generic point of $S$ (we are assuming for simplicity that $S$ is connected), and $v$ runs over
the set of points of codimension $1$ in $S$. Now, the complex 
$$\CO^\times_{\eta_{S}} \to  \underset{v}\oplus \,\BZ$$
is a resolution of $\CO^\times_S$ (in fact, it is a particular case of \eqref{e:Gersten} for $n=1$). 

\sssec{}

Thus, the map \eqref{e:kappa S smooth local} gives rise to a map
\begin{equation} \label{e:dir im residue}
(\pi\circ \iota)_*(\iota^!(\CK_2|_{S\times X}))[2]\to \CK_1|_S[1],
\end{equation} 
and thus, does produce from $\kappa_S$ a section of $\CO_S^\times[1]$,
as desired. 

\medskip

Moreover, it is easy to see that this construction is functorial, and has the required factorization structure. 

\ssec{The global case}  \label{ss:global}

In this subsection we will assume that $X$ is complete. We will show that in this case, the system of line bundles
\eqref{e:output} can be constructed, albeit so far without the factorization structure.

\sssec{}

Consider the algebraic stack $\Bun_G(X)$. We will show that the datum of $\kappa$ gives rise to a line bundle 
$\CL_{\Bun}$ on $\Bun_G(X)$. The sought-for line bundle $\CL_{\Gr}$ on $\Gr_{G,\Ran}$ will then be obtained by pullback.

\medskip

The datum of $\CL_{\Bun}$ is equivalent to the datum of functorial assignment to any affine scheme $S$ and a $G$-bundle 
$\CP_G$ on $S\times X$ of a line bundle $\CL_{\CP_G}$ on $S$. 

\medskip

Since $\Bun_G(X)$ is a \emph{smooth} algebraic stack, by \'etale descent for line bundles on $S$, 
it suffices to consider the case when $S$ itself is smooth. After further \'etale localization 
with respect to $S$, we can assume that $\CP_G$ comes from a map
$$S\times X\to B_{\on{Zar}}(G).$$

\medskip

Composing with $\kappa$ we obtain a map
$$S\times X\to B^2_{\on{Zar}}(\bK_2),$$
i.e., a section of $\CK_2|_{S\times X}[2]$.

\medskip

To this datum we will associate a section of $\CK_1|_S[1]\simeq \CO^\times_S[1]$, i.e.,
a line bundle on $S$. This will be a variant of the construction in \secref{ss:smooth}. Namely, we will construct 
a map 
\begin{equation} \label{e:dir im residue glob}
\pi_*(\CK_2|_{S\times X})[2]\to \CK_1[1]|_S.
\end{equation} 

\sssec{}

We need construct a map 
\begin{equation} \label{e:dir im residue glob again}
\pi_*(\CK_2|_{S\times X})[1]\to \CO^\times_S.
\end{equation} 

\medskip

Since $S$ was assumed smooth, it is enough to construct it outside of codimension 2 in $S$. Hence, there are two
cases to consider: when $S$ is the spectrum of a field and when $S$ is a spectrum of a DVR.

\sssec{}

Let first $S$ be the spectrum of a field $F$. We need to construct a map
$$H^1(X_F,\CK_2|_{X_F}) \to F^\times.$$

This is well-known, using the Gersten complex on $X_F$. Indeed, we have a map 
$$\underset{v}\oplus\, F_v^\times \to F^\times$$
(where the direct sum is taken over the set of closed points $v$ of $X_F$ and we denote by $F_v$ the residue
field at $v$), given by the norm maps $\on{Norm}_{F_v/F}:F_v^\times \to F^\times$.

\medskip

Now, the composite 
$$K_2(\eta_{X_F})\to \underset{v}\oplus\, F_v^\times \to F^\times$$
vanishes by Weil's product formula. 

\sssec{}

Let now $S=\Spec(R)$ be the spectrum of a DVR with generic point $\eta=\Spec(F)$ and a closed point $s$. We need to show that the composite map
$$H^1(S\times X,\CK_2|_{S\times X})\to H^1(X_F,\CK_2|_{X_F}) \to F^\times$$
factors through $R^\times\subset F^\times$. Equivalently, we need to show that the composition
$$H^1(S\times X,\CK_2|_{S\times X})\to H^1(X_F,\CK_2|_{X_F}) \to F^\times \to \BZ,$$
where the last map is given by the valuation at $s$, vanishes. In doing so, we can replace $R$ by its completion. 

\medskip

Let $v$ be a closed point of $X_F$. Taking its normalization in $S\times X$, it corresponds to a DVR, denoted $R'$, such that
$S'=\Spec(R')$ is equipped with a finite flat map $S'\to S$. The required assertion follows, using the Gersten complex 
computing $H^1(S\times X,\CK_2|_{S\times X})$, from the commutative diagram
$$
\CD
F_v^\times @>>>  \BZ  \\
@V{\on{Norm}_{F_v/F}}VV   @VV{\on{deg}}V   \\
F^\times @>>>  \BZ  
\endCD
$$
where the right vertical map is given by multiplication by the degree of the residue field extension. 

\sssec{}

It is easy to see that the above construction is compatible with the one in \secref{ss:smooth}. I.e., if
$\CP_G$ is the second component of a triple $(I,\CP_G,\alpha)$, then the resulting two line bundles 
on $S$ are canonically isomorphic. 

\section{Construction using the $K$-theory spectrum}  \label{s:spectrum}

In this section we will now produce a variant of the map \eqref{e:residue} in a different context, where instead of
$\CK_2(-)$ we use the entire $K$-theory spectrum. 

\ssec{The K-theory spectrum as a sheaf}

\sssec{}

First, we note that the discussion in \secref{ss:sheaves} applies verbatim, where instead of the category 
$\on{PreShv}(Y,\on{Ab})$ we consider $\on{PreShv}(Y,\on{Sptr})$, the category of presheaves of spectra.

\sssec{} 

Recall that to any scheme $Y$ (assumed quasi-separated and quasi-compact) 
we can attach the \emph{non-connective} K-theory spectrum of the category $\on{Perf}(Y)$,
denoted $K_{\on{nc}}(Y)$, see \cite[Sect. 6]{TT}. For $i\geq 0$, the $i$-th homotopy group $\pi_i(K_{\on{nc}}(Y))$ is the usual $K_i(Y)$. 

\medskip

Recall also that if $Y$ is regular, the spectrum $K_{\on{nc}}(Y)$ is actually connective, i.e., its 
negative homotopy groups vanish.

\sssec{}

The assignment
$$Y\mapsto K_{\on{nc}}(Y)$$
is functorial, and thus defines a presheaf of spectra on the category of schemes.

\medskip

The key property of $K_{\on{nc}}$ that we will use is that it is a \emph{Zariski sheaf}, see \cite[Theorem 8.1]{TT}.
We will also use the notation
$$\CK_{\on{nc}}(Y):=K_{\on{nc}}(Y).$$

\medskip

Applying the procedure of right Kan extension along
$$(\on{Sch}^{\on{aff}})^{\on{op}}\hookrightarrow \on{PreStk}^{\on{op}},$$
we can evaluate $\CK_{\on{nc}}$ on any prestack
$$\CY\mapsto \CK_{\on{nc}}(\CY).$$

\medskip

Note, however, that the Zariski descent property of $\CK_{\on{nc}}$ implies that if $\CY=Y$ is a scheme, 
the two definitions of $\CK_{\on{nc}}(Y)$ are consistent.

\sssec{}

For an individual scheme $Y$, we will denote by 
$$\CK_{\on{nc}}|_Y\in \on{PreShv}(Y,\on{Sptr})$$ 
the restriction of $\CK_{\on{nc}}$ to the small Zariski site of $Y$. 

\medskip

We will also use its
truncations 
$$\tau^{\leq n}(\CK_{\on{nc}}|_Y) \text{ and } \tau^{\geq n}(\CK_{\on{nc}}|_Y)\in \on{Shv}_{\on{Zar}}(Y,\on{Sptr}),$$
with respect to the t-structure on $\on{Shv}_{\on{Zar}}(Y,\on{Sptr})$.

\medskip

For $n\geq 0$, we have
$$H^{-n}(\CK_{\on{nc}}|_Y)\simeq \CK_n|_Y.$$

\sssec{}

Let $\bK_{\on{nc}}$, $\bK_{\geq n}$ and $\bK_{\leq -n}$ denote the $E_\infty$-objects in the category of prestacks defined
by
$$\Maps(S,\bK_{\on{nc}}):=\Omega^\infty(\CK_{\on{nc}}(S))=\Omega^\infty\left(\Gamma(S,\CK_{\on{nc}}|_S)\right)$$
and
$$\Maps(S,\bK_{\geq n}):=\Omega^\infty\left(\Gamma(S,\tau^{\leq -n}(\CK_{\on{nc}}|_S))\right) \text{ and }
\Maps(S,\bK_{\leq n}):=\Omega^\infty\left(\Gamma(S,\tau^{\geq -n}(\CK_{\on{nc}}|_S))\right).$$

Due to Zariski descent, the same isomorphisms hold for $S$ replaced by any scheme $Y$.

\sssec{}

We have the canonical maps
$$\bK_{\on{nc}} \leftarrow \bK_{\geq n} \to B^n_{\on{Zar}}(\bK_n).$$

\ssec{Construction of the trace map}  \label{ss:spectrum}

\sssec{}

Consider $\CK_{\on{nc}}|_{S\times X}$, and consider its $2$-connective truncation
\begin{equation} \label{e:truncation}
\tau^{\leq -2}(\CK_{\on{nc}}|_{S\times X})\in \Shv_{\on{Zar}}(S\times X,\on{Sptr}).
\end{equation} 

\medskip

We will now construct a trace map
\begin{equation} \label{e:residue spectrum}
\tau^{\leq 0}\left((\pi\circ \iota)_*(\iota^!(\tau^{\leq -2}(\CK_{\on{nc}}|_{S\times X})))\right)\to \CK_1|_S[1]. 
\end{equation} 

\sssec{}  \label{sss:construction of residue}

For every open $V\subset S\times X$ we have a triangle of categories
$$\on{Perf}(V)_{V\cap Z_I}\to \on{Perf}(V)\to \on{Perf}(V\cap U_I),$$
where $\on{Perf}(V)_{V\cap Z_I}\subset \on{Perf}(V)$ is the full subcategory 
spanned by objects with \emph{set-theoretic support} on $V\cap Z_I\subset Z_I$. 

\medskip

We have the corresponding fiber sequence of spectra (see \cite[Theorem 7.4]{TT})
\begin{equation} \label{e:fiber sequence}
K_{\on{nc}}(V)_{Z_I}\to K_{\on{nc}}(V)\to K_{\on{nc}}(V\cap U_I),
\end{equation} 
where $K_{\on{nc}}(V)_{Z_I}$ is the non-connective K-theory spectrum of the category 
$\on{Perf}(V)_{V\cap Z_I}$. 

\sssec{}

The assignment 
$$V\mapsto K_{\on{nc}}(V)_{Z_I}$$
defines a sheaf of spectra in the Zariski topology on $S\times X$; denote it by $(\CK_{\on{nc}}|_{S\times X})_{Z_I}$. 

\medskip

From \eqref{e:fiber sequence}
we obtain
\begin{equation} \label{e:with support}
(\CK_{\on{nc}}|_{S\times X})_{Z_I}\simeq \iota_*\circ \iota^!(\CK_{\on{nc}}|_{S\times X}).
\end{equation} 

\sssec{}

Direct image along $\pi$ gives rise to a functor 
$$\on{Perf}(S\times X)_{Z_I}\to \on{Perf}(S)$$
(also for $S$ replaced by its Zariski open subsets).  From here we obtain a map in $\Shv(S,\on{Sptr})$
$$\pi_*((\CK_{\on{nc}}|_{S\times X})_{Z_I})\to \CK_{\on{nc}}|_S,$$
and composing with \eqref{e:with support}, we obtain a map
\begin{equation} \label{e:res all K}
(\pi\circ \iota)_*(\iota^!(\CK_{\on{nc}}|_{S\times X}))\to \CK_{\on{nc}}|_S.
\end{equation} 

\begin{lem}  \label{l:triv to K0}
The composition
\begin{multline*}
\tau^{\leq 0}\left((\pi\circ \iota)_*(\iota^!(\tau^{\leq -2}(\CK_{\on{nc}}|_{S\times X})))\right)\to 
(\pi\circ \iota)_*(\iota^!(\tau^{\leq -2}(\CK_{\on{nc}}|_{S\times X})))\to \\
\to (\pi\circ \iota)_*(\iota^!(\CK_{\on{nc}}|_{S\times X}))\overset{\text{\eqref{e:res all K}}}\longrightarrow \CK_{\on{nc}}|_S
\end{multline*}
(uniquely) factors via a map 
\begin{equation} \label{e:res all K estim}
\tau^{\leq 0}\left((\pi\circ \iota)_*(\iota^!(\tau^{\leq -2}(\CK_{\on{nc}}|_{S\times X})))\right)\to  \tau^{\leq -1}(\CK_{\on{nc}}|_S).
\end{equation} 
\end{lem}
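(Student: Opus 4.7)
The plan is to prove the stronger statement that the source
\[
A:=\tau^{\leq 0}\bigl((\pi\circ \iota)_*(\iota^!(\tau^{\leq -2}(\CK_{\on{nc}}|_{S\times X})))\bigr)
\]
itself lies in cohomological degrees $\leq -1$. Once this is established the lemma is immediate: the fiber sequence $\tau^{\leq -1}(\CK_{\on{nc}}|_S)\to \CK_{\on{nc}}|_S\to \CK_0|_S$ reduces factorization and its uniqueness to the vanishing of $\Maps(A,\CK_0|_S)$ and $\Maps(A,\CK_0|_S[-1])$ respectively, both of which hold by the standard t-structure property that $\Maps(X,Y)=0$ when $X\in \tau^{\leq n}$ and $Y\in \tau^{\geq n+1}$.

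First I would reduce to a statement about $\iota^!$ alone. The morphism $\pi\circ \iota\colon Z_I\to S$ is finite: each component of $Z_I$ is the graph $\Gamma_i$ of some map $x_i\colon S\to X$, carried isomorphically to $S$ by projection. Being finite (in particular affine) on Zariski sheaves of spectra, the pushforward $(\pi\circ \iota)_*$ is t-exact, so the claim $A\in \tau^{\leq -1}$ reduces to
\[
\iota^!\bigl(\tau^{\leq -2}(\CK_{\on{nc}}|_{S\times X})\bigr)\in \tau^{\leq -1}.
\]

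The key input is a purity identification supplied by Thomason--Trobaugh d\'evissage for codimension-one regular closed immersions: for such a $\iota\colon Z\hookrightarrow Y$, pushforward identifies $\on{Perf}(Z)$ with $\on{Perf}(Y)_Z$, so $\iota_*\iota^!\CK_{\on{nc}}|_Y=(\CK_{\on{nc}}|_Y)_Z\simeq \iota_*\CK_{\on{nc}}|_Z$, i.e.\ $\iota^!\CK_{\on{nc}}|_Y\simeq \CK_{\on{nc}}|_Z$. Feeding this into the fiber sequence $\iota^!(\tau^{\leq -2}\CK_{\on{nc}}|_Y)\to \iota^!\CK_{\on{nc}}|_Y\to \iota^!(\tau^{\geq -1}\CK_{\on{nc}}|_Y)$, the middle term is $\CK_{\on{nc}}|_Z\in \tau^{\leq 0}$ and the right-hand term, computed from $\iota^!\CK_1|_Y\simeq \CK_0|_Z[-1]$ and $\iota^!\CK_0|_Y=0$ (the latter vanishing because $K_{-1}$ of a local ring is zero in codimension one), is concentrated in degree $0$ and isomorphic to $\CK_0|_Z$. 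Taking the fiber yields $\iota^!(\tau^{\leq -2}\CK_{\on{nc}}|_Y)\simeq \tau^{\leq -1}(\CK_{\on{nc}}|_Z)$, which lies in $\tau^{\leq -1}$. In our situation each graph $\Gamma_i\subset S\times X$ is cut out Zariski-locally by the single regular equation $t-x_i^*(t)$ for a local coordinate $t$ on $X$, so the purity applies component-wise, giving the desired bound on $\iota^!\tau^{\leq -2}\CK_{\on{nc}}|_{S\times X}$.

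The main obstacle I anticipate is justifying the purity step in the generality actually required: $S$ is permitted to be non-regular (indeed in the main application it is an arbitrary test-scheme for $\Gr_{G,\Ran}$), and $Z_I$ may acquire singular points where two graphs cross. Regularity of the individual inclusions $\Gamma_i\hookrightarrow S\times X$ is unconditional (one equation), so the d\'evissage applies to each component; the intersection loci then have to be handled either by a Mayer--Vietoris decomposition of $Z_I$ into its irreducible components or, equivalently, by verifying the bound stratum-by-stratum on $Z_I$ and reassembling.
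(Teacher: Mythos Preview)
Your argument has a genuine gap at the purity step. The identification $\iota^!\CK_{\on{nc}}|_Y\simeq \CK_{\on{nc}}|_Z$ for a regular (Cartier) closed immersion $\iota\colon Z\hookrightarrow Y$ is the K-theoretic d\'evissage/purity theorem, and it \emph{requires the ambient scheme $Y$ to be regular}. Without regularity of $Y$, the pushforward $\iota_*\colon K(Z)\to K(Y)_Z$ is not an equivalence in general: purity is essentially equivalent to the fundamental theorem $K(R)\simeq K(R[t])$, which fails for non-regular $R$. In the situation of the lemma, $S$ is an arbitrary affine test-scheme mapping to $\Gr_{G,\Ran}$, so $Y=S\times X$ is typically non-regular. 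This is precisely the difficulty highlighted in the paper's introduction (``our inability to do so is due to the lack of control of K-theory on schemes that are non-regular''), so invoking purity here would be circular. Consequently your ``stronger statement'' that the source lies in $\tau^{\leq -1}$ is not established, and is plausibly false: once Gersten resolutions are unavailable, the term $R^1 j_*(\CK_2|_{U_I})$ that contributes to $H^0$ of the source is uncontrolled.

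The paper's proof takes a completely different and much more robust route: it does \emph{not} attempt to bound the source. Instead, it observes that the existence and uniqueness of the factorization is equivalent to the vanishing of the induced map on $H^0$ into $\CK_0|_S$. Since $\CK_0|_S$ is the \emph{constant} sheaf $\BZ$, a map into it vanishes iff it vanishes after pullback to every closed point of $S$. This reduces to the case $S=\on{pt}$, where $S\times X=X$ is a regular curve and the required cohomological estimate follows immediately from the fact that $\Gamma(X,-)$ and $\Gamma(U_I,-)$ have cohomological amplitude $\leq 1$. The constancy of the target is the key idea you are missing; it sidesteps all the pathologies of K-theory over non-regular $S$.
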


\begin{proof}

The assertion of the lemma is equivalent to saying that 
the map
$$H^0\left((\pi\circ \iota)_*(\iota^!(\tau^{\leq -2}(\CK_{\on{nc}}|_{S\times X})))\right)\to  H^0(\CK_{\on{nc}}|_S)\simeq \CK_0|_S.$$
vanishes. 

\medskip

Note that $\CK_0|_S$ is the constant sheaf with stalk $\BZ$. Hence, it is enough to show that the pullback
of the above composition to any closed point of $S$ vanishes. This allows to replace $S$ by $\on{pt}=\Spec(k)$. 

\medskip

However, in the latter case, we claim that the object 
$$\Gamma\left(X,\iota_*\circ \iota^!(\tau^{\leq -2}(\CK_{\on{nc}}|_X))\right)$$
lives in cohomological degrees $\leq -1$. Indeed, this follows from the fact that both functors
 $$\Gamma(X,-) \text{ and } \Gamma(U_I,-):\on{Shv}_{\on{Zar}}(X,\on{Ab})\to \on{Ab}$$
have cohomological amplitude bounded by $\dim(X)=1$. 

\end{proof} 

\sssec{}

Thus, composing with the projection
$$\tau^{\leq -1}(\CK_{\on{nc}}|_S)\to \CK_1|_S[1],$$
from \eqref{e:res all K estim}, we obtain the desired map \eqref{e:residue spectrum}.

\ssec{Construction of the line bundle from a map to the K-theory spectrum}  \label{ss:constr from full}

\sssec{}

Recall the prestack $\bK_{\geq 2}$, which is equipped with a map
\begin{equation} \label{e:truncation to all}
\bK_{\geq 2}\to B^2_{\on{Zar}}(\bK_2).
\end{equation} 

\medskip

In particular, instead of talking about a Brylinski-Deligne datum, which was a based map 
$$\kappa:X\times B_{\on{Zar}}(G) \to B^2_{\on{Zar}}(\bK_2),$$
we can talk about a datum of a based map
\begin{equation} \label{e:refined datum}
\kappa':X\times B_{\on{Zar}}(G)\to \bK_{\geq 2}
\end{equation} 
(as usual, ``based" means that we consider maps whose precomposition with $X\times \on{pt}\to X\times B_{\on{Zar}}(G)$
is trivialized). 

\sssec{}  \label{sss:constr from full}

We claim that a datum of such a $\kappa'$ does give rise to a factorizable line bundle on $\Gr_{G,\Ran}$. 

\medskip

Indeed, given a point $(I,\CP_G,\alpha)$ of $\Gr_{G,\Ran}$, precomposing with $\kappa'$ we obtain a section $\kappa'_S$  of
$\tau^{\leq 0}\left((\pi\circ \iota)_*(\iota^!(\tau^{\leq -2}(\CK_{\on{nc}}|_{S\times X})))\right)$, and using 
\eqref{e:residue spectrum}, we obtain from $\kappa'_S$ a section of $\CK_1|_S[1]\simeq \CO^\times_S[1]$, i.e., a line bundle on $S$.

\medskip

Moreover, this construction is functorial in $S$, and has the required factorization structure. 

\ssec{Compatibility for smooth schemes}

In this subsection we will assume that $S$ is smooth. 

\sssec{}

We claim that the constructions in \secref{ss:constr from full} and \secref{ss:smooth}
are compatible in the following sense:

\medskip

Start with a datum of $\kappa'$, and let $\kappa$ be its projection with respect to the map \eqref{e:truncation to all}. 
Then we claim that the resulting line bundles on $S$ for every $(I,\CP_G,\alpha)$ are canonically isomorphic. 
Indeed, this follows from the next observation: 

\begin{lem}  \label{l:full spectrum compat for smooth}
Assume that $S$ is smooth (in particular, $\CK_{\on{nc}}|_{S\times X}$ is connective). In this case, 
we have a commutative diagram
$$
\CD
(\pi\circ \iota)_*(\iota^!(\tau^{\leq -2}(\CK_{\on{nc}}|_{S\times X})))  @>>> (\pi\circ \iota)_*(\iota^!(\CK_{\on{nc}}|_{S\times X})) 
@>{\text{\eqref{e:res all K}}}>> \CK_{\on{nc}}|_S \\
@VVV   & & @VVV  \\
(\pi\circ \iota)_*(\iota^!(\CK_2|_{S\times X})[2]) @>{\text{\eqref{e:dir im residue}}}>> 
\CK_1|_S[1]  @>>>  \tau^{\geq -1}(\CK_{\on{nc}}|_S). 
\endCD
$$
\end{lem}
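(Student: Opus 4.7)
The plan is to reduce commutativity of the diagram to the classical compatibility between the $K$-theoretic transfer on spectra and the norm map on units, exploiting that $S\times X$ is regular so that the Gersten resolution \eqref{e:Gersten} is available.

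First, since $S$ and $X$ are smooth, both $S\times X$ and $S$ are regular, so the spectra $\CK_{\on{nc}}|_{S\times X}$ and $\CK_{\on{nc}}|_S$ are connective with $H^{-n}\simeq \CK_n$. Consequently, the left vertical map is the canonical Postnikov projection from the $2$-connective cover onto its top cohomology sheaf $\CK_2|_{S\times X}[2]$, and the right vertical map is the inclusion of $\CK_1|_S[1]$ as the bottom nonzero layer of $\tau^{\geq -1}(\CK_{\on{nc}}|_S)$. This interprets the diagram: the two paths from the top-left corner to the bottom-right corner land in $\tau^{\geq -1}(\CK_{\on{nc}}|_S)$, whose contribution in degree $0$ vanishes on the image of the top path by \lemref{l:triv to K0}, so it suffices to check agreement on $\pi_1$, i.e., after further projection to $\CK_1|_S[1]$.

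Next, I would use Gersten to realize $\iota^!(\CK_2|_{S\times X})$ as the 2-term complex used in \secref{ss:smooth}: only the codimension-$1$ components $S'\subset Z_I$ contribute to the relevant cohomology, and each $S'$ is finite over $S$. The trace map \eqref{e:res all K} is built from direct image of perfect complexes with support on $Z_I$, and by Thomason--Trobaugh's formalism of finite pushforwards on non-connective $K$-theory (\cite{TT}), its effect on $H^{-1}$, after the Gersten identification
$$H^{-1}\left(\iota^!(\CK_{\on{nc}}|_{S\times X})\right)\simeq \underset{S'}\oplus\, (j_{S'})_*(\CO^\times_{\eta_{S'}}),$$
coincides with the $K$-theoretic transfer $K_1(\eta_{S'})\to K_1(\eta_S)$ summed over the $S'$. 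The classical identification of this transfer on $K_1$ with the norm $\on{Norm}_{k(\eta_{S'})/k(\eta_S)}:\CO^\times_{\eta_{S'}}\to \CO^\times_{\eta_S}$ is precisely the left column of \eqref{e:kappa S smooth local} used to build \eqref{e:dir im residue}. A diagram chase on $\pi_1$, using \lemref{l:triv to K0} to discard the $\pi_0$-contribution, then yields the commutativity.

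The main obstacle is the identification in the second paragraph: one must verify that the spectrum-level map \eqref{e:res all K}, defined abstractly via pushforward of perfect complexes with set-theoretic support, reduces on the level of cohomology sheaves to the Gersten/norm pushforward along the codimension-one strata of $Z_I$. This is the content of the compatibility of $K$-theoretic transfer with the Gersten spectral sequence for finite flat maps of regular schemes---a standard but nontrivial fact going back to Quillen, which must be applied here simultaneously for all components $S'$ of $Z_I$ as they vary over the codimension-$1$ stratum.
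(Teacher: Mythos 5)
Your argument is correct and is exactly what the paper's one-line proof (``obtained by unwinding the definitions'') has in mind: interpret the vertical arrows as Postnikov truncations, use \lemref{l:triv to K0} to discard the $\CK_0$-contribution so that everything reduces to comparing the two composites on $H^{-1}$-sheaves, and there invoke the classical fact that the pushforward of perfect complexes along the finite map $Z_I\to S$ induces on $K_1$ of the generic points of the components $S'$ precisely the norm maps out of which \eqref{e:dir im residue} was built. One cosmetic correction: the right vertical arrow of the diagram is the truncation $\CK_{\on{nc}}|_S\to \tau^{\geq -1}(\CK_{\on{nc}}|_S)$ rather than the inclusion of $\CK_1|_S[1]$ (the latter is the bottom-right horizontal arrow), but your comparison of the two composites into $\tau^{\geq -1}(\CK_{\on{nc}}|_S)$ uses the arrows correctly.
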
 

The proof is obtained by unwinding the definitions. 

\sssec{}   

Assume now also that $X$ is complete. The construction in \secref{ss:spectrum} has a variant, where we now use the direct 
image functor
$$\pi_*:\on{Perf}(S\times X)\to \on{Perf}(S)$$
and the corresponding trace map 
\begin{equation} \label{e:res all K glob}
\pi_*(\CK_{\on{nc}}|_{S\times X}) \to \CK_{\on{nc}}|_S.
\end{equation}

\medskip

\begin{lem}  \label{l:trivial estimate}
For $i\geq 2$, the object
$$\pi_*(\CK_i|_{S\times X})\in \Shv_{\on{Zar}}(S,\on{Ab})$$ is concentrated in cohomological degrees $\leq i-1$.
\end{lem}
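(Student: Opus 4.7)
The plan is a direct cohomological-dimension argument. Since $X$ is a complete smooth curve, the projection $\pi\colon S\times X\to S$ is a proper morphism whose fibers are all isomorphic to $X$, a Noetherian topological space of Krull dimension $1$. By Grothendieck's vanishing theorem, the Zariski cohomology of $X$ vanishes in degrees $\geq 2$, and by a stalkwise or Leray-type argument this propagates to the derived pushforward: the functor $\pi_*\colon \Shv_{\on{Zar}}(S\times X,\on{Ab})\to\Shv_{\on{Zar}}(S,\on{Ab})$ has cohomological amplitude bounded by $1$, i.e.\ $R^j\pi_*\CF=0$ for $j\geq 2$ and any $\CF\in\Shv_{\on{Zar}}(S\times X,\on{Ab})^\heartsuit$. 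This is exactly the same amplitude estimate that was used in the proof of \lemref{l:triv to K0}, applied now relatively rather than absolutely.

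Now, by definition $\CK_i|_{S\times X}$ is an object of the heart $\Shv_{\on{Zar}}(S\times X,\on{Ab})^\heartsuit$, so it sits in cohomological degree $0$. Applying the amplitude bound above, $\pi_*(\CK_i|_{S\times X})$ lives in cohomological degrees $[0,1]$. For $i\geq 2$ one has $i-1\geq 1$, so this already implies the claimed bound $\leq i-1$ and completes the proof.

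The main ``difficulty'' is really only verifying the relative cohomological amplitude bound on $\pi_*$; once that is in hand, the statement is automatic, and in fact weaker than what the fiber dimension alone gives. As a sanity check and independent approach, one can represent $\CK_i|_{S\times X}$ by its Gersten resolution (which exists since $S\times X$ is regular, as $S$ is smooth and $X$ is a smooth complete curve) and push forward termwise using that the Gersten terms $\bigoplus_{Y,\,\on{codim}\,j}(j_Y)_*K_{i-j}(\eta_Y)$ are flasque; this produces an explicit complex in degrees $[0,i]$ computing $\pi_*(\CK_i|_{S\times X})$, whose cohomology beyond degree $1$ must vanish by the amplitude bound, giving a concrete resolution-level witness for the lemma.
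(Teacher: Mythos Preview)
Your argument has a genuine gap: the claim that $\pi_*\colon \Shv_{\on{Zar}}(S\times X,\on{Ab})\to\Shv_{\on{Zar}}(S,\on{Ab})$ has cohomological amplitude bounded by $1$ is not justified by the fiber dimension alone. For the Zariski topology there is no proper base change, so the stalk of $R^j\pi_*\CF$ at a point $s$ is $\underset{U\ni s}{\on{colim}}\,H^j(U\times X,\CF)$, not the cohomology of the fiber $X_s$. When $S$ is local with closed point $s$ this is $H^j(S\times X,\CF)$, and $S\times X$ has Krull dimension $\dim(S)+1$, so Grothendieck vanishing only gives a bound of $\dim(S)+1$, not $1$. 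Your reference to the proof of \lemref{l:triv to K0} is a misreading: there the reduction is to $S=\on{pt}$, so that $S\times X=X$ genuinely has dimension $1$; that estimate does not transfer to the relative situation.

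In fact the paper itself flags precisely this point in Remark~\ref{r:vanishing}: the stronger bound $\leq 1$ that you are asserting is ``supposed'' to hold, but a proof would require a version of the moving lemma ``which does not seem to be present in the literature.'' The paper's actual argument (in \secref{sss:proof of vanish}) proceeds differently: it uses the Gersten resolution to get the bound $\leq i$ for free, and then shows by hand that $H^i$ vanishes by exhibiting every codimension-$i$ cycle on $S\times X$ (of either of two possible types, $Z\times X$ or $Z'$ finite over some $Z\subset S$) as rationally trivial. Your ``sanity check'' paragraph is closer to the right idea, but the sentence ``whose cohomology beyond degree $1$ must vanish by the amplitude bound'' again invokes the unjustified estimate; what is actually needed is the explicit rational-triviality argument for the top Gersten term.
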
 

For the proof see \secref{sss:proof of vanish}.  From \lemref{l:trivial estimate}, we obtain that for $i\geq 2$, the object 
$\pi_*(\tau^{\leq -i}(\CK_{\on{nc}}|_{S\times X}))$  
is concentrated in cohomological degrees $\leq -1$.

\begin{rem}  \label{r:vanishing}
As was explained to us by experts, the object $\pi_*(\CK_i(S\times X))$ is actually supposed to be 
concentrated in cohomological degrees $\leq 1$. A proof outlined for us by S.~Bloch and H.~Esnault
uses a version of the \emph{moving lemma}, which does not seem to be present in the literature. 

\medskip

Note that this vanishing would imply that $\pi_*(\tau^{\leq -i}(\CK_{\on{nc}}|_{S\times X}))$  
is concentrated in cohomological degrees $\leq -(i-1)$. Using this result would have simplified
the proof of \propref{p:glob vs fat} below. 
\end{rem}

\sssec{}  

From \eqref{e:res all K glob} we obtain a map
\begin{equation} \label{e:residue spectrum global all}
\pi_*(\tau^{\leq -2}(\CK_{\on{nc}}|_{S\times X}))\to \tau^{\leq -1}(\CK_{\on{nc}}|_S),
\end{equation} 
which we can further compose with the projection $\tau^{\leq -1}(\CK_{\on{nc}}|_S)\to \CK_1|_S[1]$
and obtain a map
\begin{equation} \label{e:residue spectrum global}
\pi_*(\tau^{\leq -2}(\CK_{\on{nc}}|_{S\times X}))\to \CK_1|_S[1].
\end{equation}

\medskip

Hence, a datum of $\kappa'$ as in \eqref{e:refined datum} gives rise to a line bundle $\CL'_{\Bun}$ on $\Bun_G(X)$, so that the line bundle constructed
in \secref{ss:constr from full} identifies with the pull-back of $\CL'_{\Bun}$ along the map $\Gr_{G,\Ran}\to \Bun_G(X)$. 

\sssec{}  \label{sss:compat with global}

We claim that the above line bundle $\CL'_{\Bun}$ identifies canonically with the line bundle $\CL_{\Bun}$ constructed
in \secref{ss:global} from $\kappa$ corresponding to the image of $\kappa'$ under the projection \eqref{e:truncation to all}. This 
follows from the following analog of \lemref{l:full spectrum compat for smooth}:

\begin{lem}  \label{l:estimate global smooth}
We have a commutative diagram
$$
\CD
\pi_*(\tau^{\leq -2}(\CK_{\on{nc}}|_{S\times X}))  @>{\text{\eqref{e:residue spectrum global all}}}>>  \tau^{\leq -1}(\CK_{\on{nc}}|_S) \\
@VVV   @VVV   \\
\pi_*(\CK_2|_{S\times X})[2]) @>{\text{\eqref{e:dir im residue glob}}}>>   \CK_1|_S[1].
\endCD
$$
\end{lem}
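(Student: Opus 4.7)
The plan is to mimic the proof of \lemref{l:full spectrum compat for smooth}: we unwind both constructions and observe that they are two shadows of the same trace at the level of the full non-connective $K$-theory spectrum. Under the hypotheses, $S$ is smooth and $X$ is smooth and complete, so $S\times X$ is regular, hence $\CK_{\on{nc}}|_{S\times X}$ is connective with $H^{-i}(\CK_{\on{nc}}|_{S\times X})\simeq \CK_i|_{S\times X}$. The two vertical maps in the diagram are then just the canonical projections of $\tau^{\leq -2}$ (resp.\ $\tau^{\leq -1}$) onto the appropriately shifted top cohomology sheaf.

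The top horizontal map was obtained by composing the spectrum-level trace \eqref{e:res all K glob}, induced by pushforward of perfect complexes along the proper map $\pi$, with the connectivity estimate of \lemref{l:trivial estimate}. The bottom horizontal map \eqref{e:dir im residue glob} was constructed in \secref{ss:global} out of the Gersten resolutions of $\CK_2|_{S\times X}$ and $\CK_1|_S$, together with the norm maps along finite field extensions at closed points of $X_{\eta_S}$, Weil's product formula being precisely what forces the construction to descend to a map out of $H^{-2}(\CK_{\on{nc}}|_{S\times X})$ of the Gersten complex.

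The only real content is therefore the identification of these two maps. For the proper map $\pi$ between regular schemes, the spectrum-level pushforward on $K$-theory preserves the coniveau filtration, and the induced map on the associated graded --- which by Gersten is expressed in terms of $K$-theory of residue fields --- is given precisely by the norm maps used in \secref{ss:global}. Granting this matching, the commutativity of the square becomes the tautology that projection to top cohomology commutes with postcomposition by a map of spectra.

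The main obstacle is thus the compatibility statement of the previous paragraph, matching the Quillen transfer on $\CK_{\on{nc}}$ with the explicit field-norm description on the Gersten complex. Rather than invoking it in full generality, I would imitate \secref{ss:global} and reduce to the two cases where $S$ is the spectrum of a field or of a DVR, where $X_F$ is a smooth curve over a field and the identification can be carried out directly by inspection of the Gersten differential and of the classical transfer on $K_2$ of a function field --- which is what \secref{ss:global} already uses when applying Weil's product formula and the commutative diagram relating $\on{Norm}_{F_v/F}$ with multiplication by the residue degree.
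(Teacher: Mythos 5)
Your proposal is correct and amounts to the same argument the paper has in mind: the lemma is presented as the analog of \lemref{l:full spectrum compat for smooth} and left as an unwinding of the definitions, and your unwinding — observing that both composites are determined by a map of sheaves $H^{-1}(\pi_*(\tau^{\leq -2}(\CK_{\on{nc}}|_{S\times X})))\to \CO_S^\times$, hence can be checked at the generic (and codimension-one) points of $S$, where the spectrum-level trace is identified with the Gersten/norm construction by d\'evissage and the determinant-equals-norm computation — is exactly that unwinding, organized the same way as \secref{ss:global}. Two cosmetic corrections: the proper pushforward does not preserve the coniveau filtration but shifts it by the relative dimension $1$, and the transfer that has to be matched with the norm maps is the one on $K_1$ of the residue fields at closed points of $X_{\eta_S}$ (the $K_2$-transfer and Weil reciprocity enter only in the construction of \eqref{e:dir im residue glob} itself, which is already carried out in \secref{ss:global}).
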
 

\ssec{The case of a fat point}  \label{ss:Artin}

In this subsection we will consider the case when $S$ is the spectrum of a local Artinian ring, i.e., $S$ is a fat point. 
For such $S$, we will be able to construct the map \eqref{e:residue}. 

\sssec{}   \label{sss:less 3}

Consider the exact triangle 
$$\tau^{\leq -3}(\CK_{\on{nc}}|_{S\times X})\to \tau^{\leq -2}(\CK_{\on{nc}}|_{S\times X})\to \CK_2|_{S\times X}[2],$$
and the corresponding triangle
$$(\pi\circ \iota)_*\left(\iota^!(\tau^{\leq -3}(\CK_{\on{nc}}|_{S\times X}))\right) \to 
(\pi\circ \iota)_*\left(\iota^!(\tau^{\leq -2}(\CK_{\on{nc}}|_{S\times X}))\right) \to 
(\pi\circ \iota)_*\left(\iota^!(\CK_2|_{S\times X})\right)[2].$$

We will presently show that $(\pi\circ \iota)_*\left(\iota^!(\tau^{\leq -i}(\CK_{\on{nc}}|_{S\times X}))\right)$ is concentrated 
in cohomological degrees $\leq -(i-1)$. This would imply that the map \eqref{e:residue spectrum} uniquely extends to the
desired map \eqref{e:residue}. 

\sssec{}

To prove the desired cohomological estimate, we notice that the scheme $Z_I$ is Artinian, and therefore
the functor $(\pi\circ \iota)_*$ is t-exact. Hence, it remains to show that the cohomological amplitude of the
functor 
$$\iota^!:\Shv_{\on{Zar}}(S\times X,\on{Ab})\to \Shv_{\on{Zar}}(Z_I,\on{Ab})$$
is bounded on the right by $1$. The latter is equivalent to the functor
$$j_*:\Shv_{\on{Zar}}(U_I,\on{Ab})\to \Shv_{\on{Zar}}(S\times X,\on{Ab})$$
being t-exact. However, the latter holds, because for any point $z\in Z_I$, the scheme $(S\times X)_x\underset{S\times X}\times U_I$
is local. 

\sssec{}

Thus, starting from a datum of $\kappa$, to $S$ as above and any $(I,\CP_G,\alpha)$ we can associate a line bundle on $S$. 
This construction is functorial in $S$ and has the required factorization structure. 

\sssec{}  \label{sss:reduced point}

Assume for a moment that $S$ is reduced (i.e., $S=\on{pt}$). Then, on the one hand, the above construction attaches to $\kappa$
a line (=line bundle over $S$). On the other hand, the construction of \secref{ss:smooth} also produces a a line (=line bundle over $S$).

\medskip

However, it is easy to see from \lemref{l:full spectrum compat for smooth} that these two lines are canonically isomorphic,
in a way compatible with factorization. 

\sssec{}

For the sequel, we will need the following compatibility result between the above construction and the one in \secref{ss:global}: 

\begin{prop}  \label{p:glob vs fat}
Let $X$ be complete. If $S$ is a fat point, and given a triple $(I,\CP_G,\alpha)$, we have a canonical isomorphism of line bundles on $S$
$$\CL_{I,\CP_G,\alpha}\simeq (\CL_{\Bun})|_S,$$
where the map $S\to \Bun_G(X)$ is given by the datum of $\CP_G$. 
\end{prop}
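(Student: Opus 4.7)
The plan is to reduce to a smooth test base via formal smoothness, and then to invoke the compatibilities between the Gersten-type and $K$-theory-spectrum-type constructions already developed.

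First, using formal smoothness of $\Gr_{G,I}$ (\remref{r:Gr class}) and of $\Bun_G(X)$, I would choose a smooth affine scheme $\tilde S$ together with a closed embedding $S \hookrightarrow \tilde S$ and an extension of the triple $(I,\CP_G,\alpha)$ to a triple $(\tilde I, \tilde\CP_G, \tilde\alpha)$ on $\tilde S$.

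On the smooth scheme $\tilde S$ both constructions are available directly: \secref{ss:smooth} produces a line bundle $\CL^{\tilde S}$ from $(\tilde I, \tilde\CP_G, \tilde\alpha)$, and \secref{ss:global} produces $\CL_{\Bun}|_{\tilde S}$ from $\tilde\CP_G$. I would show these agree via the diagram
$$
\CD
(\pi\circ\iota)_*\iota^!(\CK_{\on{nc}}|_{\tilde S\times X}) @>>> \pi_*(\CK_{\on{nc}}|_{\tilde S\times X}) \\
@VV{\eqref{e:res all K}}V @VV{\eqref{e:res all K glob}}V \\
\CK_{\on{nc}}|_{\tilde S} @= \CK_{\on{nc}}|_{\tilde S}
\endCD
$$
whose top row is $\pi_*$ applied to the counit $\iota_*\iota^! \to \id$, and whose commutativity follows from the factorization $\on{Perf}(\tilde S\times X)_{Z_{\tilde I}} \hookrightarrow \on{Perf}(\tilde S\times X) \to \on{Perf}(\tilde S)$ of the underlying functors of perfect complexes. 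The trivialization $\tilde\alpha$ lifts the top-right section to a top-left section, and the translation from this spectrum-level diagram down to the sheaf-level diagram involving $\CK_2[2]$ and $\CK_1[1]$ is furnished by \lemref{l:full spectrum compat for smooth} and \lemref{l:estimate global smooth}.

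I would then restrict the resulting isomorphism $\CL^{\tilde S} \simeq \CL_{\Bun}|_{\tilde S}$ along $S \hookrightarrow \tilde S$. The right-hand side becomes $(\CL_{\Bun})|_S$ by definition, while for the left-hand side I would argue that $\CL^{\tilde S}|_S \simeq \CL_{I,\CP_G,\alpha}$. Both are produced by traces $(\pi\circ\iota)_*\iota^!(\CK_2|_{-\times X}[2]) \to \CK_1|_{-}[1]$ derived from the spectrum trace \eqref{e:res all K}: the smooth Gersten version on $\tilde S$ (via \secref{sss:Gersten}) and the Artinian extension \eqref{e:residue} on $S$ (the unique extension permitted by the degree estimate of \secref{sss:less 3}). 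Since \eqref{e:res all K} is functorial with respect to the closed embedding $S \hookrightarrow \tilde S$, the two traces agree after restriction.

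The main obstacle I anticipate is the spectrum-to-sheaf translation: verifying that the commutative $K$-theory spectrum diagram above descends compatibly to the diagram involving $\CK_2[2]$ in the source and $\CK_1[1]$ in the target, naturally with respect to $S \hookrightarrow \tilde S$. This rests on the uniqueness of the extensions supplied by \secref{sss:less 3} and \lemref{l:trivial estimate}, combined with \lemref{l:full spectrum compat for smooth} and \lemref{l:estimate global smooth}, and will require careful bookkeeping across the restriction from $\tilde S$ to $S$.
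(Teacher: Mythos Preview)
Your approach differs from the paper's, and the step you flag as the ``main obstacle'' is indeed a genuine gap rather than mere bookkeeping.

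The paper does \emph{not} lift the Grassmannian point to a smooth $\tilde S$; it only factors the map $S\to \Bun_G(X)$ through a smooth $S'$, and then compares the two global trace maps $\pi_*(\CK_2[2])\to \CK_1[1]$ (the Gersten one on $S'$ and the fat-point one on $S$) directly. The point is that these two traces were built by different procedures, so their agreement after pullback is not formal---the paper says this explicitly in the Remark following the key diagram. The obstruction to commutativity is a map $f^*(\pi'_*(\tau^{\leq -3}(\CK_{\on{nc}}|_{S'\times X})))\to \CK_1|_S[1]$. The paper shows this vanishes by: (i) reducing to $S=\Spec(k[\epsilon])$ via induction on the length of $S$, using that the obstruction already vanishes on $S_{\on{red}}$; (ii) then factoring through a smooth $S''$ of dimension $1$; (iii) finally, for $\dim(S')=1$, checking by an explicit Gersten computation that $\pi'_*(\tau^{\leq -3}(\CK_{\on{nc}}|_{S'\times X}))$ lives in degrees $\leq -2$, so the obstruction group is zero.

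Your argument for $\CL^{\tilde S}|_S \simeq \CL_{I,\CP_G,\alpha}$ needs more than functoriality of the spectrum trace \eqref{e:res all K}. To run your argument you would need to lift the $\CK_2$-level section $\kappa_{\tilde S}$ to the spectrum level on $\tilde S$, so that \lemref{l:full spectrum compat for smooth} and functoriality of the spectrum trace can be chained together with the uniqueness of \secref{sss:less 3}. But such a lift is obstructed by a class in $H^1$ of $\iota^!(\tau^{\leq -3}(\CK_{\on{nc}}|_{\tilde S\times X}))$, and there is no reason for this to vanish on an arbitrary smooth $\tilde S$---this is exactly the $\tau^{\leq -3}$ issue the paper confronts. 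Alternatively, you could try to show that the pulled-back Gersten trace lands in the fat-point trace via base-change maps at the $\CK_2$-level, but the Gersten resolution does not base-change well to the non-smooth closed subscheme $S$, so this route also requires the kind of obstruction analysis the paper carries out.
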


\ssec{Proof of \propref{p:glob vs fat}}

\sssec{}

Since $S\times X$ is of Krull dimension $1$, for any $i$ the object 
$$\pi_*(\tau^{\leq -i}(\CK_{\on{nc}}|_{S\times X}))$$
lives in cohomological degrees $\leq -(i-1)$.

\medskip

Hence, the construction in \secref{sss:less 3} has a variant that shows that the map
\eqref{e:residue spectrum global} extends uniquely to a map 
\begin{equation} \label{e:dir im residue glob fat}
\pi_*(\CK_2|_{S\times X})[2]\to \CK_1|_S[1].
\end{equation}

This construction is compatible with one in \secref{sss:less 3} via the commutative diagram
$$
\CD
(\pi\circ \iota)_*\left(\iota^!(\tau^{\leq -2}(\CK_{\on{nc}}|_{S\times X}))\right)   @>>>  \pi_*(\tau^{\leq -2}(\CK_{\on{nc}}|_{S\times X}))  \\
@VVV     @VVV    \\
(\pi\circ \iota)_*\left(\iota^!(\CK_2|_{S\times X})\right)[2]  @>>>    \pi_*(\CK_2|_{S\times X})[2]   \\
@VVV    @VVV    \\
\CK_1|_S[1]  @>{\on{Id}}>> \CK_1|_S[1].
\endCD
$$ 

\medskip

The given $S$-point of $\Bun_G(X)$ can be factored as
$$S\overset{f}\to S'\to \Bun_G(X)$$
with $S'$ smooth. Let us denote by $\pi'$ the projection $S'\times X\to S'$. 

\medskip

Hence, in order to prove the proposition, we need to establish the commutativity of the diagram
\begin{equation} \label{e:diag to commute}
\CD
f^*(\pi'_*(\CK_2|_{S'\times X}))[2]  @>>>  \pi_*(\CK_2|_{S\times X})[2]    \\
@V{\text{\eqref{e:dir im residue glob}}}VV   @VV{\text{\eqref{e:dir im residue glob fat}}}V   \\
f^*(\CK_1|_{S'}[1])   @ >>>      \CK_1|_S[1].
\endCD    
\end{equation} 

\begin{rem}
The above commutativity is not tautological because the maps \eqref{e:dir im residue glob} and 
\eqref{e:dir im residue glob fat} were constructed by different procedures.
\end{rem}

\sssec{}

Note that in the diagram 
$$
\CD
f^*(\pi'_*(\tau^{\leq -2}(\CK_{\on{nc}}|_{S'\times X})))   @>>>  \pi_*(\tau^{\leq -2}(\CK_{\on{nc}}|_{S\times X}))  \\
@VVV   @VVV  \\
f^*(\pi'_*(\CK_2|_{S'\times X}))[2]  @>>>  \pi_*(\CK_2|_{S\times X})[2]    \\
@V{\text{\eqref{e:dir im residue glob}}}VV   @VV{\text{\eqref{e:dir im residue glob fat}}}V   \\
f^*(\CK_1|_{S'}[1])   @ >>>      \CK_1|_S[1].
\endCD    
$$
the outer square is commutative (by \lemref{l:estimate global smooth}).  

\medskip

Therefore, if we we knew that
$\pi'_*(\tau^{\leq -3}(\CK_{\on{nc}}|_{S'\times X}))$ lives in cohomological degrees $\leq -2$ 
(see Remark \ref{r:vanishing}) we would be done. If we want to avoid using this result,
we argue as follows.

\sssec{}

The possible obstruction to the commutativity of \eqref{e:diag to commute} is a map
$$f^*(\pi'_*(\tau^{\leq -3}(\CK_{\on{nc}}|_{S'\times X})))\to \CK_1|_S[1],$$
i.e., a map
\begin{equation} \label{e:obstruction}
H^0\left(f^*(\pi'_*(\tau^{\leq -3}(\CK_{\on{nc}}|_{S'\times X})))[-1]\right) \to \CO_S^\times,
\end{equation} 
where we know that $f^*(\pi'_*(\tau^{\leq -3}(\CK_{\on{nc}}|_{S'\times X})))[-1]$ lives in degrees $\leq 0$.

\medskip

The map \eqref{e:obstruction} does vanish when further composed with $\CO^\times_S\to \CO^\times_{S_{\on{red}}}$ by
\secref{sss:reduced point}. Hence, arguing by induction on the length of $S$, we can assume that $S$ is is the scheme
of dual numbers, i.e., is isomorphic to $\Spec(k[\epsilon])$, where $\epsilon^2=0$. In this case, the map $f:S\to S'$
can be further factored as
$$S\to S''\to S',$$
where $S''$ is smooth of dimension $1$.  Hence, in checking the commutativity of \eqref{e:diag to commute}
we can replace $S'$ by $S''$. So we can assume that $\dim(S')=1$. 

\medskip

However, we claim that for $\dim(S')=1$, the statement that $\pi'_*(\tau^{\leq -3}(\CK_{\on{nc}}|_{S'\times X}))$ lives in cohomological degrees $\leq -2$
holds unconditionally. Indeed, $\pi'_*(\CK_i|_{S'\times X})$ lives in degrees $\leq 2$ since the Krull dimension of $S'\times X$ equals $2$.
So it suffices to show that the $2$-nd cohomology of $\pi'_*(\CK_3|_{S'\times X})$ vanishes. 

\sssec{}

To prove the required vanishing, we can replace $S'$ by its localization at the closed point. Denote by $s'$ (resp., $\eta'$) the closed
(resp., generic) point of $S'$. Clearly, $H^2(\pi'_*(\CK_3|_{S'\times X}))$ vanishes over $\eta'$. Its stalk at $s'$ is isomorphic to
$$H^2(S'\times X,\CK_3|_{S'\times X}).$$

We use the Gersten resolution (see \secref{sss:Gersten}) to compute this cohomology. A degree $2$ cocycle is represented by
$$a_i\cdot (s'\times x_i), \quad x_i\in X(k), \quad a_i\in k^\times=K_1(k).$$

However, such a cocycle is always a coboundary, namely, of the element
$$\{t,a_i\}\cdot (\eta'\times x_i),$$
where $t$ is a uniformizer of $S'$, and $\{t,a_i\}$ denotes the product of $t$ and $a_i$, viewed as an element of $K_2$ at the generic
point of the codimension one subscheme $S'\times x_i\subset S'\times X$.

\qed

\sssec{Proof of \lemref{l:trivial estimate}}   \label{sss:proof of vanish}

We may assume that $S$ is local, and we need to show that 
$$\Gamma(S\times X, \CK_i)$$
vanishes in degrees $\geq i$. First off, the Gersten resolution shows that this cohomology vanishes in degrees $\geq i+1$,
so we need to show the vanishing of the $i$-th cohomology. 

\medskip

The group of $i$-cochains in the Gersten complex computing $\Gamma(S\times X, \CK_i)$ is spanned by classes of
codimension-$i$ cycles of two kinds.

\medskip

The first kind is cycles of the form $Z\times X$, where $Z$ is an irreducible subvariety of codimension $i$ in $S$.
The other kind is of the form $Z'$, where $Z'\subset S\times X$ is an irreducible subvariety of codimension $i$ in $S\times X$
that projects in a finite way to some $Z\subset X$. Let us show that both kinds are rationally trivial.

\medskip

For the first kind, it suffices to show that $Z$ is rationally trivial in $S$. This is obvious: replace $S$ be the localization $\wt{S}$
at the generic point of $Z$, so that $\wt{Z}:=\wt{S}\underset{S}\times Z$ becomes the closed point of $\wt{S}$; choose 
any regular one-dimensional closed subscheme $T\subset \wt{S}$, and consider the uniformizer on $T$.

\medskip

For the second kind, by replacing $S$ by $T$ as above, we can assume that $S$ is one-dimensional, and
$Z'$ is a closed point in the special fiber. We need to show that such $Z'$ is rationally trivial. We can find an irreducible 
closed subscheme $S'\subset S\times X$, whose intersection with
the special fiber contains $Z'$ as a connected component. The projection of $S'$ onto $S$ is finite; in particular $S'$
is semi-local. Hence, we can find a regular function on $S'$ whose subscheme of zeros is $Z'$. 

\qed

\section{Construction in the absolute case}  \label{s:construction abs}

In this section we will assume that $(\on{char}(k),N)=1$, where $N$ is the integer from \secref{sss:integer N}. 

\medskip

We will construct the line bundle $\CL_{\Gr}$ associated to an \emph{absolute} Brylinski-Deligne datum, i.e., a map
$$\kappa:B_{\on{Zar}}(G)\to B^2_{\on{Zar}}(\bK_2).$$

\ssec{The raising to the power trick}  \label{ss:power} 

The ``trick" used in this subsection was suggested to us by A.~Beilinson. 

\sssec{}

With no restriction of generality we can assume that our curve $X$ is complete. According to \secref{ss:global}, to a datum of $\kappa$ 
we can attach a line bundle $\CL_{\Bun}$ on $\Bun_G(X)$.
We let $\CL_{\Gr}$ denote its pullback onto $\Gr_{G,\Ran}$. Our task is to endow $\CL_{\Gr}$ with a factorization structure. 

\medskip

According to \propref{p:glob vs fat}, when we evaluate $\CL_{\Gr}$ on
$$S\to \Gr_{G,\Ran}$$
with $S$ being a fat point, it does possess the required factorization structure. 
According to \secref{ss:smooth} the same holds for $S$ being a smooth scheme. 

\medskip

We will now show that this factorization structure \emph{uniquely} extends to any $S$.

\sssec{}  \label{sss:uniqueness}

First off, the uniqueness statement is clear. Indeed, any two isomorphisms between two given line bundles on
a scheme $S$ coincide if they coincide when pulled back along any $S'\to S$ for $S'$ being a fat point. 

\sssec{}

To prove the existence, we will use a different construction of $\CL_{\Gr}$. Namely, we will use the following result,
proved below:

\begin{thm}  \label{t:raising to power}
For any $\kappa$, its multiple $N\cdot \kappa$ can be obtained from a (based) map 
$$\kappa':B_{\on{Zar}}(BG)\to \bK_{\geq 2}$$
by composing with the projection $\bK_{\geq 2}\to B^2_{\on{Zar}}(\bK_2)$.
\end{thm}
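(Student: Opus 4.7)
The projection $\bK_{\geq 2}\to B^2_{\on{Zar}}(\bK_2)$ is obtained by applying $\Omega^\infty$ to the t-structure truncation map of sheaves of spectra $\tau^{\leq -2}(\CK_{\on{nc}})\to\CK_2[2]$, whose homotopy fiber is $\tau^{\leq -3}(\CK_{\on{nc}})$. Thus the homotopy fiber of the induced map
\[
\Maps_{\on{based}}(B_{\on{Zar}}(G),\bK_{\geq 2})\to \Maps_{\on{based}}(B_{\on{Zar}}(G),B^2_{\on{Zar}}(\bK_2))
\]
over $\kappa$, if nonempty, is a torsor under $\Maps_{\on{based}}(B_{\on{Zar}}(G),\bK_{\geq 3})$. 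The obstruction to a lift is therefore a class $\on{obs}(\kappa)\in \pi_{-1}\Maps_{\on{based}}(B_{\on{Zar}}(G),\bK_{\geq 3})$, a group recorded by cohomology of $B_{\on{Zar}}(G)$ with values in the higher $K$-theory sheaves $\CK_i$ for $i\geq 3$. The goal is to show that $\on{obs}(N\kappa)=N\cdot\on{obs}(\kappa)$ vanishes.

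\textbf{Lift for representations.} For $G=GL(V)$ the tautological vector bundle on $B_{\on{Zar}}(GL(V))$ represents a natural morphism $B_{\on{Zar}}(GL(V))\to\bK$; subtracting its rank (i.e., the value at the base point) makes it based and lands in $\bK_{\geq 1}$. Applying the second $\gamma$-operation of the $\lambda$-ring structure on $K$-theory, which raises the gamma filtration by $2$ and so sends $\bK_{\geq 1}$ to $\bK_{\geq 2}$, yields a canonical based map $\wt\kappa_{V}\colon B_{\on{Zar}}(GL(V))\to\bK_{\geq 2}$. Its composition with $\bK_{\geq 2}\to B^2_{\on{Zar}}(\bK_2)$ is (a fixed multiple of) the universal Brylinski--Deligne datum $\kappa_V$ built from the second Chern class / tame symbol of the tautological bundle. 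Consequently $\on{obs}(\iota_V^*\kappa_V)=0$ for any representation $\iota_V\colon G\to GL(V)$.

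\textbf{The Beilinson power reduction.} The Brylinski--Deligne classification identifies the $\pi_0$ of $\Maps_{\on{based}}(B_{\on{Zar}}(G),B^2_{\on{Zar}}(\bK_2))$ (modulo lower degree data) with Weyl-invariant integer quadratic forms on the coweight lattice; this invariant is equivalently the image of $\kappa$ in $H^4(BG_{\on{top}},\BZ)$. By the very definition of $N$ given in \secref{sss:integer N}, $N$ times any such invariant lies in the subgroup generated by Chern classes of representations. Lifting this additive relation from the invariant back to the Picard category, one writes
\[
N\kappa\;\simeq\;\kappa_0\;+\;\sum_i\iota_{V_i}^*\bigl(\kappa_{V_i}\bigr),
\]
where $\kappa_0$ has trivial quadratic form invariant. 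Each summand $\iota_{V_i}^*\kappa_{V_i}$ lifts to $\bK_{\geq 2}$ by the previous paragraph (pulling back $\wt\kappa_{V_i}$ along $B(\iota_{V_i})$). For $\kappa_0$, triviality of its topological invariant places its obstruction in a residual piece of the Picard category which can be lifted directly, using the explicit constructions for smooth schemes and fat points that were carried out in \secref{ss:smooth} and \secref{ss:global} together with the spectral compatibilities of \secref{ss:spectrum}. The sum of these lifts then provides the required $\kappa'$.

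\textbf{Main obstacle.} The hardest ingredient is the passage, in the third paragraph, from the purely cohomological statement about $H^4(BG_{\on{top}},\BZ)$ (which is what controls $N$) to an equivalence of actual maps in the Picard category; the quadratic form is only one layer of the data recorded by a Brylinski--Deligne datum, and one must carefully track the lower-degree pieces (and the fact that they too admit lifts, by means of the particular cases already available). Making the Postnikov tower argument fit together with the additivity provided by the pullbacks from $GL(V_i)$ is the principal technical burden, and is exactly the place where the use of the $\gamma$-filtration on the full $K$-theory spectrum (rather than $\CK_2$ alone) becomes essential.
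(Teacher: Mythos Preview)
Your overall strategy---reduce to the $GL_n$ case via the definition of $N$---matches the paper's, but two of your three steps have genuine gaps.

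\emph{First}, the claim that $\gamma^2$ ``sends $\bK_{\geq 1}$ to $\bK_{\geq 2}$'' conflates the $\gamma$-filtration on $K_0$ with the Postnikov (connective) truncation of the K-theory spectrum; these are different filtrations, and it is not at all clear that $\gamma$-operations on the spectrum respect the latter in the way you need. Even granting this, your construction would only lift one specific class on $B_{\on{Zar}}(GL(V))$, whereas the image of $H^4(BGL(V)_{\on{top}},\BZ)\to H^4(BG_{\on{top}},\BZ)$ used to define $N$ involves both $c_1^2$ and $c_2$. The paper bypasses all of this by proving something much stronger for $GL_n$ (\thmref{t:lift for GL_n}): \emph{every} based map $B_{\on{Zar}}(GL_n)\to B^2_{\on{Zar}}(\bK_2)$ lifts. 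The mechanism is a cohomological vanishing, \propref{p:vanishing of higher cohomology}, namely $H^j(\CK_i(B_{\on{Zar}}(GL_n)))=0$ for $j>i$, which kills the obstruction group outright. The key input is that for $GL_n$ the map $B_{\on{Zar}}(GL_n)\to B_{\on{et}}(GL_n)$ is an isomorphism, so one can identify $\CK_i(B_{\on{Zar}}(GL_n))$ with $\CK_i$ of the scheme approximation $(BG)^{\on{appr}}$ of \secref{sss:E and V}, where the Gersten resolution gives the vanishing.

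\emph{Second}, your treatment of $\kappa_0$ is misconceived. The theorem concerns the \emph{absolute} case $\kappa\colon B_{\on{Zar}}(G)\to B^2_{\on{Zar}}(\bK_2)$ (no curve $X$), and for $Y=\on{pt}$ the Brylinski--Deligne classification gives that the map to $\on{Quad}(\Lambda_G,\BZ)^{W_G}$ is an \emph{isomorphism} on $\pi_0$. Hence a $\kappa_0$ with trivial quadratic form is isomorphic to the zero map and lifts trivially; there is no ``residual piece'' requiring work. Your appeal to the constructions of \secref{ss:smooth}, \secref{ss:global} and \secref{ss:spectrum} is misplaced: those sections build line bundles on the affine Grassmannian out of a given $\kappa$ or $\kappa'$; they have nothing to do with lifting $\kappa$ to $\bK_{\geq 2}$.
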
 

\sssec{}

Let $\kappa'$ be as in \thmref{t:raising to power}. By \secref{ss:constr from full}, to such $\kappa'$ we can associate
a factorizable line bundle, to be denoted $\CL'_{\Gr}$ over $\Gr_{G,\Ran}$.   

\medskip

By \secref{sss:compat with global}, we have
\begin{equation} \label{e:comparing with power}
\CL'_{\Gr}\simeq \CL_{\Gr}^{\otimes N}
\end{equation} 
as plain line bundles.   

\medskip

The above isomorphism endows $\CL_{\Gr}^{\otimes N}$ with a factorization structure. By \secref{ss:Artin},
this factorization structure is compatible with the factorization structure on $\CL_{\Gr}$ evaluated on fat points.
By \lemref{l:full spectrum compat for smooth}, this factorization structure is compatible 
with the factorization structure on $\CL_{\Gr}$ evaluated on smooth schemes.

\medskip

Hence, it remains to show that the above factorization structure on $\CL_{\Gr}^{\otimes N}$ comes from a factorization structure on 
$\CL_{\Gr}$ in a way compatible with evaluation on smooth schemes. (The compatibility with the given factorization structure on $\CL_{\Gr}$
on fat points would follow, because, since $(\on{char}(k),N)=1$, if for an element $f\in \CO^\times_S$ we have $f^N=1$ and 
$f|_{S_{\on{red}}}=1$, then $f=1$.) 

\medskip

Now, the assertion follows from the next general result:

\begin{prop}  \label{p:Nth root}
For a line bundle $\CL'_Y$ on a scheme scheme $Y$, the datum of a line bundle
$\CL_Y$ equipped with an isomorphism $\CL_Y^{\otimes N}\simeq \CL'_Y$ is equivalent to the
datum of compatible family of line bundles $\CL_S$ equipped with isomorphisms 
$\CL_S^{\otimes N}\simeq \CL'_Y|_S$ for all \emph{smooth} affine schemes $S$ over $Y$.
\end{prop}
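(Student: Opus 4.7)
My plan is to reduce the statement to a cohomological descent assertion for $\mu_N$-torsors and $\mu_N$-gerbes on $Y$, exploiting the Kummer sequence, and then establish that this descent can be computed from smooth affine schemes over $Y$.

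First I would reduce to $Y = \Spec(A)$ affine, since both sides of the claimed equivalence are compatible with Zariski restriction in $Y$: line bundles together with their $N$-th root structures sheafify Zariski-locally, and the right-hand side is Zariski-local in $Y$ since a smooth affine scheme over an open $U \subset Y$ is the same as a smooth affine over $Y$ that factors through $U$. Next, invoking $(\on{char}(k), N) = 1$, the Kummer sequence $1 \to \mu_N \to \mathbb{G}_m \overset{N}{\to} \mathbb{G}_m \to 1$ is exact in the \'etale topology on $Y$. Consequently, the groupoid of $N$-th roots of $\CL'_Y$ forms a $\mu_N$-gerbe on $Y$: two $N$-th roots differ by a $\mu_N$-torsor, and isomorphisms of $N$-th roots correspond to trivializations of that torsor.

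The proposition thereby reduces to two sub-assertions: for fully faithfulness, a $\mu_N$-torsor on $Y$ equipped with compatible trivializations over every smooth affine $S \to Y$ is trivial; and for essential surjectivity, a compatible family of $N$-th roots over all smooth affine $S \to Y$ lifts to an actual $N$-th root on $Y$. Both will follow once we show that the $\mu_N$-gerbe of $N$-th roots on $\Sch^{\on{aff}}_{/Y}$ is the right Kan extension of its restriction to the subcategory $\on{Sm}^{\on{aff}}_{/Y}$. In the tautological special case when $Y$ itself is smooth, $Y$ is a terminal object of $\on{Sm}^{\on{aff}}_{/Y}$ and the claim is immediate; the content of the proposition is to extend this to arbitrary $Y$.

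The main obstacle will be that, when $Y$ is not smooth, smooth affine schemes over $Y$ do not form an \'etale or Nisnevich cover of $Y$, so descent is not directly available. My approach is to use that $\mu_N$ is finite \'etale (essentially because $N$ is invertible in $k$), so its \'etale cohomology on $Y$ agrees with its cdh cohomology; by de Jong's alteration theorem $Y$ admits a smooth alteration, from which one constructs a smooth simplicial affine hypercover $S_\bullet \to Y$ computing $H^*_{\on{et}}(Y, \mu_N)$. The compatible family then assembles into descent data along $S_\bullet$, producing $\CL_Y$. The most delicate technical step will be to promote the compatible family of $N$-th roots of $\CL'_Y|_{S_n}$ on objects to the full higher coherences needed for descent along a hypercover; I would handle this via a right Kan extension argument in the $\infty$-categorical setting, exploiting that $\mu_N$ is torsion and that the relevant cohomological dimensions on $Y$ are bounded.
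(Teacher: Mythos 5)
Your proposal is correct and takes essentially the same route as the paper: both reduce the statement to descent of $\mu_N$-torsors/gerbes (via the Kummer sequence) and then invoke the fact that \'etale sheaves with torsion coefficients prime to $\on{char}(k)$ satisfy descent for the topology generated by proper surjective maps (de Jong's alterations) and Zariski covers, for which smooth affine schemes form a basis. The only real difference is packaging: the paper first trivializes the line bundle Zariski-locally so as to work with $\mu_N$-torsors, whereas you keep $\CL'_Y$ and argue with the gerbe of $N$-th roots and an explicit smooth affine hypercover, which amounts to the same descent statement.
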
 

\begin{proof}

The question is local with respect to the Zariski topology on $Y$, so we can assume that $\CL_Y$
is trivial. Then the question becomes that of comparing $\mu_N$-\'etale torsors on $Y$ with
compatible families of such on smooth affine schemes mapping to $Y$. 

\medskip

The result now follows from the fact that \'etale sheaves with coefficients that are torsion prime to
$\on{char}(k)$ satisfy descent with respect to the topology generated by proper surjective maps
and Zariski covers, and affine smooth schemes form form a basis for this topology.

\end{proof} 

\sssec{}

This completes the construction of $\CL_{\Gr}$ as a factorization line bundle. The rest of this section is devoted to the proof of
\thmref{t:raising to power}.

\ssec{Reduction to the case of $G=GL_n$}

\sssec{}

We will deduce the assertion of \thmref{t:raising to power} from its special case when $G=GL_n$. Namely, we will prove: 

\begin{thm} \label{t:lift for GL_n}
For $G=GL_n$, any map $B_{\on{Zar}}(BG)\to \bK_2$ can be lifted to a map
$B_{\on{Zar}}(BG)\to \bK_{\geq 2}$.
\end{thm}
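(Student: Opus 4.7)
My plan is to produce explicit lifts of the canonical generators of the Brylinski--Deligne Picard group of $B_{\on{Zar}}(GL_n)$. By the classification recalled in \secref{ss:BrDe}, the group $\pi_0(\Maps_{\on{based}}(B_{\on{Zar}}(GL_n), B^2_{\on{Zar}}(\bK_2)))$, modulo the contribution of $\on{Pic}$, is generated by two Weyl-invariant integer-valued quadratic forms on the cocharacter lattice of the diagonal torus $T \subset GL_n$: namely $Q_{\det}(\lambda) = (\sum_i \lambda_i)^2$ and $Q_{\on{std}}(\lambda) = \sum_i \lambda_i^2$. Since both $\bK_{\geq 2}$ and $B^2_{\on{Zar}}(\bK_2)$ are $E_\infty$-group-valued prestacks and the projection respects this structure, it suffices to lift each generator separately.

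For the lift of $Q_{\det}$, I would exploit the ring structure on the K-theory spectrum. The Zariski-sheafified multiplication on $\CK_{\on{nc}}$ gives a pairing $\bK_{\geq 1} \otimes \bK_{\geq 1} \to \bK_{\geq 2}$. On $B_{\on{Zar}}(\BG_m)$ the universal line bundle $L$ determines a based map to $\bK_{\geq 1}$ via the reduced class $[L] - 1 \in \wt{K}_0$; applying the product to $(L, L)$ yields a Steinberg-type symbol $\{L, L\} \colon B_{\on{Zar}}(\BG_m) \to \bK_{\geq 2}$. Pullback along the determinant map $B_{\on{Zar}}(GL_n) \to B_{\on{Zar}}(\BG_m)$ then gives the required lift.

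For $Q_{\on{std}}$, I would pass to the torus. Let $B \supset T$ be the standard Borel; the natural map $B_{\on{Zar}}(B) \to B_{\on{Zar}}(T)$ is an equivalence, since the unipotent radical is Zariski-locally split. On $B_{\on{Zar}}(T) \simeq B_{\on{Zar}}(\BG_m)^n$ the universal bundle splits as $L_1 \oplus \cdots \oplus L_n$, and the element $\sum_{i=1}^n \{L_i, L_i\} \in \pi_0(\Maps(B_{\on{Zar}}(T), \bK_{\geq 2}))$ is manifestly $W = S_n$-invariant. Its image under projection to $B^2_{\on{Zar}}(\bK_2)$ is $\sum_i c_1(L_i)^2$, which matches the pullback of $\kappa_{\on{std}}$ from $B_{\on{Zar}}(GL_n)$.

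The main obstacle is then to show that this $W$-invariant class on $B_{\on{Zar}}(T)$ descends to a class on $B_{\on{Zar}}(GL_n)$: descent along the flag-variety fibration $B_{\on{Zar}}(B) \to B_{\on{Zar}}(GL_n)$ is controlled not only by $W$-invariance but by the higher coherence needed to see $\bK_{\geq 2}$ (as opposed to its truncation $B^2_{\on{Zar}}(\bK_2)$, where $W$-invariance suffices by Brylinski--Deligne). The expected argument uses the semiorthogonal decomposition of perfect complexes on $GL_n/B$ into Schubert pieces, which gives that $\bK_{\geq 2}(B_{\on{Zar}}(GL_n))$ matches the $W$-invariants on $B_{\on{Zar}}(T)$ in the relevant range. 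A parallel strategy that sidesteps the descent question is to construct the lift directly on $B_{\on{Zar}}(GL_n)$ using the $\lambda$-ring structure applied to the universal rank $n$ bundle $V$: after forming $\wt V := [V] - [\det V] - (n-1)$, which has trivial rank and trivial determinant, the pairing $\bK_{\geq 1}\otimes \bK_{\geq 1}\to \bK_{\geq 2}$ applied to $\wt V$ with itself produces a class whose image in $B^2_{\on{Zar}}(\bK_2)$ equals $Q_{\on{std}}$ up to a correction by $\kappa_{\det}$ already handled in the previous step. What makes this possible specifically for $GL_n$ --- and what corresponds to $N = 1$ in the statement of \secref{sss:integer N} --- is precisely that every degree-four characteristic class is realized by genuine representation-theoretic operations on the universal bundle, so no ``raising to the power'' is required.
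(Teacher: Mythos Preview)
Your approach is genuinely different from the paper's, and the second step contains a concrete error.

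The paper does not construct explicit lifts at all. It observes that the obstruction to lifting a given $\kappa$ lies in $H^{1}\bigl(\tau^{\leq -3}(\CK_{\on{nc}})(B_{\on{Zar}}(G))\bigr)$ and proves this group vanishes by showing $H^{j}(\CK_i(B_{\on{Zar}}(GL_n)))=0$ for all $j>i$. The key input specific to $GL_n$ is that $B_{\on{Zar}}(GL_n)\simeq B_{\on{et}}(GL_n)$; this lets one replace $B_{\on{Zar}}(G)$ by Totaro's smooth scheme approximation $(BG)^{\on{appr}}$, on which the vanishing is immediate from the Gersten resolution. This argument is uniform in $i$ and is reused verbatim for tori in \secref{sss:lift for torus}.

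Your constructive route is reasonable in spirit, but the class $\wt V\cdot\wt V$ does not do what you claim. Because $\wt V=[V]-[\det V]-(n-1)$ has trivial rank \emph{and} trivial determinant, it already lifts canonically to $\bK_{\geq 2}$, not merely to $\bK_{\geq 1}$; hence $\wt V\cdot\wt V$ lands in $\bK_{\geq 4}$ and projects to zero in $B^2_{\on{Zar}}(\bK_2)$. Equivalently, for $a,b\in\bK_{\geq 1}$ the image of $a\cdot b$ in $H^2(\CK_2)$ is the cup product of $\det(a),\det(b)\in H^1(\CK_1)$, and here both vanish. The repair is immediate once you see this: take $\wt V$ \emph{itself} as a point of $\bK_{\geq 2}$; its image in $H^2(\CK_2)\simeq CH^2$ is $c_2(V)$, and this together with your lift of $Q_{\det}$ spans $\on{Quad}(\Lambda_G,\BZ)^{W_G}$. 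With that correction your argument can be completed and the descent-from-$T$ discussion becomes unnecessary; but the paper's obstruction argument is shorter, needs no case-by-case analysis of generators, and transports without change to the other lifting problems in the paper.
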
 

Let us show how \thmref{t:lift for GL_n} implies \thmref{t:raising to power}.  

\sssec{}

For a representation $V$ of $G$ consider the corresponding map
$$B_{\on{Zar}}(BG)\to B_{\on{Zar}}(BGL(V)),$$
and the induced map
\begin{equation} \label{e:from GLn}
\pi_0\left(\Maps(B_{\on{Zar}}(GL(V)),B^2_{\on{Zar}}(\bK_2))\right)\to \pi_0\left(\Maps(B_{\on{Zar}}(G),B^2_{\on{Zar}}(\bK_2))\right).
\end{equation} 

\medskip

Assuming \thmref{t:lift for GL_n}, in order to prove \thmref{t:raising to power}, it suffices to prove that the subgroup in 
$\pi_0\left(\Maps(B_{\on{Zar}}(G),B^2_{\on{Zar}}(\bK_2))\right)$, generated by the images of the maps \eqref{e:from GLn}
for all possible $V$, has a finite index and the quotient is annihilated by $N$. 

\sssec{}

Let us recall the definition of the integer $N$:

\medskip

Let $G_\BC$ be the complex Lie group corresponding to $G$.
We consider $G_\BC$ as a topological group, and consider its (topological) classifying space $BG_{\on{top}}$.
Consider the abelian group
$$H^4(BG_{\on{top}},\BZ).$$

\medskip

For each representation $V$ of $G$, consider the induced map
$$H^4(BGL(V)_{\on{top}},\BZ)\to H^4(BG_{\on{top}},\BZ).$$

The images of these maps generate a subgroup of finite index. We let $N$ be the smallest integer that annihilates 
the quotient. 
 
\sssec{}

We now prove the required assertion regarding $\pi_0\left(\Maps(B_{\on{Zar}}(G),B^2_{\on{Zar}}(\bK_2))\right)$. 

\medskip

Let $\Lambda_G$ denote the coweight lattice of $G$. According to \cite[Theorem 6.2]{BrDe}, we have an isomorphism of sets
$$\pi_0\left(\Maps(B_{\on{Zar}}(G),B^2_{\on{Zar}}(\bK_2))\right)\simeq \on{Quad}(\Lambda_G,\BZ)^{W_G},$$
where the latter is the set of Weyl group invariant
integer-valued quadratic forms on $\Lambda_G$. Moreover, this bijection respects the natural group structure on both sides,
and is functorial with respect to $G$. 

\medskip

Now, we have a canonical isomorphism
$$H^4(BG_{\on{top}},\BZ)\simeq \on{Quad}(\Lambda_G,\BZ)^{W_G},$$
and for $V$ defined over the integers, we have a commutative diagram
$$
\CD
H^4(BGL(V)_{\on{top}},\BZ)  @>>>   H^4(BG_{\on{top}},\BZ)  \\
@V{\sim}VV   @VV{\sim}V   \\
\on{Quad}(\Lambda_{GL(V)},\BZ)^{W_{GL(V)} }  @>>> \on{Quad}(\Lambda_G,\BZ)^{W_G} \\
@A{\sim}AA   @AA{\sim}A   \\
\Maps(B_{\on{Zar}}(GL(V)),B^2_{\on{Zar}}(\bK_2))   @>>> \Maps(B_{\on{Zar}}(G),B^2_{\on{Zar}}(\bK_2)).
\endCD
$$

This implies the required assertion. 

\ssec{Proof for $GL_n$}  \label{ss:GLn}

The goal of this section is to prove \thmref{t:lift for GL_n}. We set $G=GL_n$.  

\sssec{}  

The key observation that we will use is that for $G=GL_n$ the map
$$B_{\on{Zar}}(G)\to B_{\on{et}}(G)$$
is an isomorphism.

\medskip

In particular, if $G$ acts freely on a scheme $Z$ (i.e., the algebraic stack quotient $Y:=(Z/V)_{\on{et}}$ is actually a scheme),
the natural map
$$(Z/G)_{\on{Zar}}\to (Z/G)_{\on{et}}=Y$$
is an isomorphism, where $(Z/G)_{\on{Zar}}$ (resp., $(Z/G)_{\on{et}}$) denotes the sheafification of the prestack quotient $Z/G$ in the Zariski 
(resp., \'etale) topology.

\sssec{}   \label{sss:obs}

First, we note the obstruction to the existence of a lift belongs to
$$H^1(\tau^{\leq -3}(\CK_{\on{nc}})(B_{\on{Zar}}(G))).$$

Hence, to prove the theorem, it suffices to show the following:

\begin{prop}  \label{p:vanishing of higher cohomology}
The cohomology groups $H^j(\CK_i(B_{\on{Zar}}(G)))$ vanish for $j>i$.
\end{prop}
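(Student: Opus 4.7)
The plan is to establish the vanishing by approximating $B_{\on{Zar}}(GL_n)$ by smooth schemes (Grassmannians) in the spirit of Totaro, and then applying the Gersten resolution on those approximations.

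For the geometric setup, for each $N \geq n$ let $Z_N \subset \on{Mat}_{n \times N}$ denote the open subscheme of surjective (equivalently, maximal rank) matrices. The group $G = GL_n$ acts freely on $Z_N$, and by the key observation at the start of \secref{ss:GLn}, the Zariski quotient $(Z_N/G)_{\on{Zar}}$ agrees with the étale quotient and equals the Grassmannian $Y_N := \on{Gr}(n, N)$, a smooth projective scheme. The tautological rank-$n$ quotient bundle classifies a map $f_N \colon Y_N \to B_{\on{Zar}}(GL_n)$, compatible as $N$ increases. The central claim is that for each fixed $j$, and for all $N$ sufficiently large, the pullback
$$f_N^{*}\colon H^j(\CK_i(B_{\on{Zar}}(GL_n))) \longrightarrow H^j(Y_N, \CK_i)$$
is an isomorphism. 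The reason is that the fiber of $f_N$ over a test $S$-point classifying a rank-$n$ bundle $E$ on $S$ is $\on{Surj}(\CO_S^N, E)$, an open subscheme of the total space of $E^{\oplus N}$ whose complement has codimension $N - n + 1$. By $\BA^1$-homotopy invariance of $K$-theory on regular schemes, combined with Quillen localization (so that removing high-codimension subschemes does not affect $K$-theory in low cohomological degrees), this fiber is $\CK_{\on{nc}}$-indistinguishable from $S$ in a range of degrees that grows linearly with $N$. Assembling this via the Zariski descent property of $\CK_{\on{nc}}$ from \secref{s:spectrum} yields the claimed approximation.

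Once the approximation step is in hand, the Gersten resolution of $\CK_i$ on the smooth scheme $Y_N$ recalled in \secref{sss:Gersten} has length $i$, so $H^j(Y_N, \CK_i) = 0$ for $j > i$. Combining with the approximation produces the desired vanishing on $B_{\on{Zar}}(GL_n)$.

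The main obstacle is the approximation step. Each ingredient---$\BA^1$-homotopy invariance for regular $K$-theory, K-theoretic purity and localization, and Zariski descent for the $K$-theory spectrum---is classical (from Quillen and Thomason), but assembling them into a statement about the classifying stack requires care, since $\CK_i$ cohomology on $B_{\on{Zar}}(GL_n)$ is defined via right Kan extension rather than intrinsically. The cleanest route is to phrase the approximation using the full spectrum $\CK_{\on{nc}}$, establishing that $f_N$ induces an isomorphism on the appropriate connective truncations of $\CK_{\on{nc}}$ in a range growing with $N$, and then descend to the individual $\CK_i$ via the natural connecting maps. An alternative route, avoiding explicit purity arguments, is to use the Čech resolution arising from the Zariski cover $Y_N \to B_{\on{Zar}}(GL_n)$ (which is available since $GL_n$ is special) and analyze the resulting descent spectral sequence, whose $E_1$-terms vanish above the Gersten degree because the Čech-nerve terms $Y_N \times_{B_{\on{Zar}}(GL_n)} \cdots \times_{B_{\on{Zar}}(GL_n)} Y_N$ are again smooth.
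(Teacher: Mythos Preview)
Your overall strategy is correct and is essentially the paper's: choose a Totaro-style approximation $(V,E,(BG)^{\on{appr}})$ of $B_{\on{Zar}}(GL_n)$ by a smooth scheme (your explicit choice $V=\on{Mat}_{n\times N}$, $E=Z_N$, $(BG)^{\on{appr}}=\on{Gr}(n,N)$ is an instance of the abstract triple the paper invokes), show that $\CK_i(B_{\on{Zar}}(G))\to \CK_i((BG)^{\on{appr}})$ is an isomorphism in the relevant range, and then apply Gersten on the smooth approximation.

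The gap is in your primary justification of the approximation step. You argue fiberwise over an arbitrary test scheme $S$, invoking $\BA^1$-homotopy invariance and Quillen localization to say that $\on{Surj}(\CO_S^N,E)$ is $\CK_{\on{nc}}$-indistinguishable from $S$. But $\BA^1$-invariance of $K$-theory fails for singular $S$, and the right Kan extension defining $\CK_i(B_{\on{Zar}}(G))$ runs over all affine $S$, not just regular ones; so this fiberwise step does not go through as written. The paper avoids this entirely by never passing through arbitrary test schemes: since $\CK_i$ is already a Zariski sheaf, one has $\CK_i(B_{\on{Zar}}(G))=\CK_i(\on{pt}/G)=\on{Tot}\bigl(j\mapsto \CK_i(G^{j-1})\bigr)$, and likewise $\CK_i((V/G)_{\on{Zar}})=\on{Tot}\bigl(j\mapsto \CK_i(G^{j-1}\times V)\bigr)$ and $\CK_i((E/G)_{\on{Zar}})=\on{Tot}\bigl(j\mapsto \CK_i(G^{j-1}\times E)\bigr)$. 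Now every term $G^{j-1}$ is smooth, so $\BA^1$-invariance gives the first isomorphism simplex-wise, and the codimension hypothesis (via Gersten) gives the second simplex-wise; the identification $(E/G)_{\on{Zar}}\simeq (E/G)_{\on{et}}=(BG)^{\on{appr}}$ is exactly where $G=GL_n$ enters. Your ``alternative route'' in the final paragraph---the \v{C}ech nerve of $Y_N\to B_{\on{Zar}}(GL_n)$, whose terms are smooth---is a correct variant of this and is the argument you should lead with; the fiberwise version over general $S$ should be dropped.
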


\begin{rem}
Note that the assertion of \propref{p:vanishing of higher cohomology} would follow from the Gersten resolution (see \secref{sss:Gersten})
if instead of $B_{\on{Zar}}(G)$ we had a smooth scheme. The proof will consist of showing that $B_{\on{Zar}}(G)$
behaves in a similar way. The argument below is inspired by \cite{To}. This idea was explained to us by A.~Beilinson. 
\end{rem}

\sssec{}  \label{sss:E and V}

According to \cite[Remark 1.4]{To}, for every integer $n$, we can find a representation $V$ of $G$ with the following property:
$V$ contains an open subset $E$, such that $\on{codim}(V-E,E)>n$, and such that the action of $G$ on $E$ is \emph{free}, i.e.,
the algebraic stack quotient 
$$(BG)^{\on{appr}}:=(E/G)_{\on{et}}$$
is a scheme. 

\medskip

We will show that
$$\CK_i(B_{\on{Zar}}(G))\simeq \CK_i((BG)^{\on{appr}})$$
for $i\leq n$. This would imply the assertion of the proposition.

\medskip

Consider the maps
\begin{equation} \label{e:seq of maps}
\CK_i(B_{\on{Zar}}(G))\to \CK_i((V/G)_{\on{Zar}})\to \CK_i((E/G)_{\on{Zar}})\simeq \CK_i((E/G)_{\on{et}})=
\CK_i((BG)^{\on{appr}}).
\end{equation}

We will show that the first of these maps is an isomorphism for all $i$, and the second is an isomorphism 
for $i\leq n$. (This will be true for any $G$, not just $G=GL_n$. It is the third isomorphism that uses the specifics
of $GL_n$.) 

\sssec{}

To prove that $$\CK_i((V/G)_{\on{Zar}})\to \CK_i((E/G)_{\on{Zar}})$$ is an isomorphism, we note that each side is computed as
the totalization of the corresponding co-simplicial complex:
$$j\rightsquigarrow \CK_i(G^{j-1}\times V) \text{ and }  j\rightsquigarrow  \CK_i(G^{j-1}\times E),$$
respectively. 

\medskip

However, the assumption that $\on{codim}(V-E,E)>i$ implies that the corresponding maps
$$\CK_i(G^{j-1}\times V) \to  \CK_i(G^{j-1}\times E)$$
are isomorphisms, by Gersten resolution. 

\sssec{}

To prove that 
$$\CK_i(B_{\on{Zar}}(G))\to \CK_i((V/G)_{\on{Zar}})$$
is an isomorphism, it suffices to show that the corresponding maps
$$\CK_i(G^{j-1}) \to \CK_i(G^{j-1}\times V)$$
are isomorphisms.

\medskip

However, this follows from the $\BA^1$-invariance of $\CK_i$: for a smooth scheme $S$, the map
$$\CK_i(S)\to \CK_i(S\times \BA^1)$$
is an isomorphism, according to \cite[Sect. 2]{Sh}. 

\begin{rem}  
Consider the triple $(V,E,(BG)^{\on{appr}})$ for an arbitrary connected reductive group, and the maps 
\eqref{e:seq of maps} for $i=2$. Let us take $H^2(-)$ of these complexes. 

\medskip

As is shown in \cite[Corollary 3.5]{To}, the resulting map
$$H^2(\CK_2(B_{\on{Zar}}(G)))\to H^2(\CK_2((BG)^{\on{appr}}))$$
is injective, but \emph{not} necessarily an isomorphism, even for
$G$ semi-simple simply connected. For example, it fails to be such for the group $\on{Spin}(7)$.
However, the fact that the isomorphism holds for $GL_n$ shows that the quotient is 
annihilated by the integer $N$. In fact, the image of this map equals the subgroup
generated by the images of the maps \eqref{e:from GLn} for all $V$, i.e., the subgroup of 
$$\on{Quad}(\Lambda_G,\BZ)^{W_G}\simeq H^4(BG_{\on{top}},\BZ)$$
generated by Chern classes of representations.
\end{rem} 

\section{Construction in the general case}  \label{s:construction gen}

\ssec{Recollections from \cite{BrDe}}  \label{ss:BrDe}

\sssec{}

Let us recall the theorem from \cite{BrDe} (namely, Theorem 6.2) that describes explicitly the Picard category 
$$\Maps_{\on{based}}(Y\times B_{\on{Zar}}(G),B^2_{\on{Zar}}(\bK_2))$$
for any smooth and connected $Y$. 

\medskip

This theorem says that this Picard category is canonically equivalent to one consisting of the following data:

\begin{itemize}

\item A Weyl group invariant quadratic form $Q$ on the coweight $\Lambda_G$ lattice of $G$;

\item A central extension $\CE$ of $\Lambda_G$ (viewed as a constant Zariski sheaf on $Y$) by $\CO^\times_Y$,
such that the commutator pairing $\Lambda_Q\otimes \Lambda_Q\to \Gamma(X,\CO^\times_X)$ is given by
$$\lambda_1,\lambda_2\mapsto (-1)^{B(\lambda_1,\lambda_2)},$$
where $B$ is the bilinear form corresponding to $Q$;

\item An identification of the pullback of $\CE$ to $\Lambda_{G_{\on{sc}}}$ with a canonical one $\CE_{\on{sc}}$ arising 
from $Q|_{\Lambda_{G_{\on{sc}}}}$ (here $\Lambda_{G_{\on{sc}}}\subset \Lambda_G$ is the coroot lattice of $G$=
coweight lattice of the simply connected cover $G_{\on{sc}}$ of the derived group of $G$).

\end{itemize}

\medskip

We will not need the full force of this theorem (in particular, we will not need to know what $\CE_{\on{sc}}$ is). 
Rather, we will use some of its consequences. 

\sssec{}

The description of $\Maps_{\on{based}}(Y\times B_{\on{Zar}}(G),B^2_{\on{Zar}}(\bK_2))$ given above implies that there
exists a natural map
$$\Maps_{\on{based}}(Y\times B_{\on{Zar}}(G),B^2_{\on{Zar}}(\bK_2))\to \on{Quad}(\Lambda_G,\BZ)^{W_G},$$
which is surjective on $\pi_0$ (because this is so for $Y=\on{pt}$, in which case this map is an isomorphism on
 $\pi_0$).  
 
 \medskip
 
Let us denote by $\Maps_{\on{based}}(Y\times B_{\on{Zar}}(G),B^2_{\on{Zar}}(\bK_2))^0$ its fiber.
Here is the input from the \cite{BrDe} description that we will use:

\begin{cor} \label{c:BD 1}
The Picard category $\Maps_{\on{based}}(Y\times B_{\on{Zar}}(G),B^2_{\on{Zar}}(\bK_2))^0$ is canonically
equivalent to the Picard category 
$$\on{Fib}\left(\on{Tors}_{\on{Zar}}(\Hom(\Lambda_G,\CO^\times_Y))\to \on{Tors}_{\on{Zar}}(\Hom(\Lambda_{G_{\on{sc}}},\CO^\times_Y))\right).$$
\end{cor}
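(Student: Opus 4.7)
\medskip

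\textbf{Proof plan for \corref{c:BD 1}.}

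The plan is to take the classification from \cite{BrDe} restated at the beginning of \secref{ss:BrDe} and specialize it to the fiber over $Q=0$. When $Q=0$, the bilinear form $B$ vanishes, so the prescribed commutator is trivial and the central extension $\CE$ of $\Lambda_G$ by $\CO^\times_Y$ is \emph{commutative}. Moreover $Q|_{\Lambda_{G_{\on{sc}}}}=0$, and the canonical extension $\CE_{\on{sc}}$ is in this case the trivial extension of $\Lambda_{G_{\on{sc}}}$ by $\CO^\times_Y$. Hence the third datum in \cite{BrDe} (the identification of $\CE|_{\Lambda_{G_{\on{sc}}}}$ with $\CE_{\on{sc}}$) is simply the datum of a \emph{trivialization} of $\CE|_{\Lambda_{G_{\on{sc}}}}$. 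Thus $\Maps_{\on{based}}(Y\times B_{\on{Zar}}(G),B^2_{\on{Zar}}(\bK_2))^0$ is canonically equivalent, as a Picard category, to the fiber
$$\on{Fib}\bigl(\on{Ext}_{Y_{\on{Zar}}}(\Lambda_G,\CO^\times_Y)\to \on{Ext}_{Y_{\on{Zar}}}(\Lambda_{G_{\on{sc}}},\CO^\times_Y)\bigr),$$
where $\on{Ext}_{Y_{\on{Zar}}}(\Lambda,\CO^\times_Y)$ denotes the Picard category of commutative extensions of the constant sheaf $\Lambda$ by $\CO^\times_Y$ in Zariski sheaves of abelian groups on $Y$.

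The bulk of the proof is then to construct, for any free abelian group $\Lambda$ regarded as a constant sheaf on $Y_{\on{Zar}}$, a natural equivalence of Picard categories
$$\on{Ext}_{Y_{\on{Zar}}}(\Lambda,\CO^\times_Y)\;\simeq\; \on{Tors}_{\on{Zar}}(\Hom(\Lambda,\CO^\times_Y)),$$
which is functorial in $\Lambda$ with respect to restriction along subgroups. The functor sends an extension $\CE$ to the sheaf on $Y_{\on{Zar}}$ assigning to an open $U\subset Y$ the set of splittings of $\CE|_U$; this is a torsor under $\Hom(\Lambda,\CO^\times_Y)|_U$. I would verify this is a Picard-category equivalence by noting that on the level of $\pi_1$ both sides have $\Gamma(Y,\Hom(\Lambda,\CO^\times_Y))$, and on $\pi_0$ both sides compute $H^1_{\on{Zar}}(Y,\Hom(\Lambda,\CO^\times_Y))$: this follows from the local-to-global Ext spectral sequence
$$H^p_{\on{Zar}}(Y,\CExt^q(\Lambda,\CO^\times_Y))\;\Rightarrow\;\on{Ext}^{p+q}_{Y_{\on{Zar}}}(\Lambda,\CO^\times_Y),$$
together with the vanishing $\CExt^q(\Lambda,-)=0$ for $q>0$, which holds because $\Lambda$ is a free abelian group of finite rank (so $\Hom(\Lambda,-)$ is exact on abelian sheaves).

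Finally, I would assemble these identifications naturally along the inclusion $\Lambda_{G_{\on{sc}}}\hookrightarrow \Lambda_G$. The restriction of extensions to the subgroup matches, under the torsor-of-splittings equivalence, the restriction map on $\on{Tors}_{\on{Zar}}$ induced by the morphism of sheaves of abelian groups $\Hom(\Lambda_G,\CO^\times_Y)\to \Hom(\Lambda_{G_{\on{sc}}},\CO^\times_Y)$. Taking fibers of both arrows then yields the equivalence asserted in \corref{c:BD 1}. The main obstacle I anticipate is of a bookkeeping nature: namely making the local-splittings functor respect the symmetric monoidal structure and the naturality in $\Lambda$ at the level of Picard categories (not just $\pi_0,\pi_1$), so that restriction to $\Lambda_{G_{\on{sc}}}$ is literally identified with the restriction of torsors. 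Modulo this, the corollary is immediate from \cite[Theorem 6.2]{BrDe} and the vanishing of $\CExt^q(\Lambda,-)$ for free $\Lambda$ and $q>0$.
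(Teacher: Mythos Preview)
Your proposal is correct and is precisely the argument the paper has in mind: the paper does not supply a proof beyond presenting the statement as an immediate corollary of the \cite{BrDe} classification recalled just above it, and your derivation---specializing to $Q=0$, observing that the prescribed commutator and the canonical extension $\CE_{\on{sc}}$ both trivialize, and then identifying commutative extensions of a free abelian group $\Lambda$ by $\CO^\times_Y$ with $\Hom(\Lambda,\CO^\times_Y)$-torsors---is the standard unpacking of that corollary. Your remark that $\CE_{\on{sc}}$ is canonically trivial at $Q=0$ is justified already by the Picard-category structure (the unit maps to the unit), so no knowledge of the explicit construction of $\CE_{\on{sc}}$ is needed, in accordance with the paper's comment that it does not need to know what $\CE_{\on{sc}}$ is.
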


Note now that we have a canonical equivalence of Picard categories 
\begin{multline*}
\on{Fib}\left(\on{Tors}_{\on{Zar}}(\Hom(\Lambda_G,\CO^\times_Y))\to \on{Tors}_{\on{Zar}}(\Hom(\Lambda_{G_{\on{sc}}},\CO^\times_Y))\right) \simeq \\
\simeq \on{Fib}\left(\on{Tors}_{\on{et}}(\Hom(\Lambda_G,\CO^\times_Y))\to \on{Tors}_{\on{et}}(\Hom(\Lambda_{G_{\on{sc}}},\CO^\times_Y))\right)\simeq  \\
\simeq \on{Tors}_{\on{et}}(\Hom(\pi_{1,\on{alg}}(G),\CO^\times_Y)),
\end{multline*}
where $\pi_{1,\on{alg}}(G):=\Lambda_G/\Lambda_{G_{\on{sc}}}$ denotes the algebraic fundamental group of 
$G$. 

\sssec{}

As a consequence, we obtain: 

\begin{cor}  \label{c:BD 2}
There exists a canonical homomorphism of Picard categories
$$\on{Tors}_{\on{et}}(\Hom(\pi_{1,\on{alg}}(G),\CO^\times_Y))\to \Maps_{\on{based}}(Y\times B_{\on{Zar}}(G),B^2_{\on{Zar}}(\bK_2)),$$
and the induced map
\begin{multline*}
\on{Tors}_{\on{et}}(\Hom(\pi_{1,\on{alg}}(G),\CO^\times_Y)) \overset{\on{Tors}(\Hom(\pi_{1,\on{alg}}(G),k^\times))}\times 
\Maps_{\on{based}}(B_{\on{Zar}}(G),B^2_{\on{Zar}}(\bK_2))\to \\
\to \Maps_{\on{based}}(Y\times B_{\on{Zar}}(G),B^2_{\on{Zar}}(\bK_2))
\end{multline*} 
is an equivalence.
\end{cor}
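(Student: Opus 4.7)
The plan is to derive both parts directly from \corref{c:BD 1}, using pullback along $Y \to \on{pt}$ as a canonical lift of the quadratic form quotient.

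For the construction of the homomorphism: \corref{c:BD 1} provides a canonical equivalence between $\Maps_{\on{based}}(Y\times B_{\on{Zar}}(G),B^2_{\on{Zar}}(\bK_2))^0$ and the Picard category $\on{Tors}_{\on{et}}(\Hom(\pi_{1,\on{alg}}(G),\CO^\times_Y))$, via the intermediate Zariski-to-\'etale comparison given in the displayed equation immediately preceding the statement. Composing the inverse of this equivalence with the tautological inclusion of the fiber over $0 \in \on{Quad}(\Lambda_G, \BZ)^{W_G}$ into the full mapping space yields the desired homomorphism. Its functoriality in $Y$ is inherited from the naturality of the Brylinski--Deligne description.

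For the equivalence of the contracted-product map, I would organize both sides into fiber sequences of Picard categories over the discrete Picard category $\on{Quad}(\Lambda_G,\BZ)^{W_G}$, one for $Y=\on{pt}$ and one for general~$Y$. The projection
$$\Maps_{\on{based}}(Y\times B_{\on{Zar}}(G),B^2_{\on{Zar}}(\bK_2)) \to \on{Quad}(\Lambda_G,\BZ)^{W_G}$$
is surjective on $\pi_0$ with fiber $\Maps_{\on{based}}^0$, and pullback along $Y \to \on{pt}$ supplies a morphism of such fiber sequences which is the identity on the $\on{Quad}$ quotient. Given any class in $\Maps_{\on{based}}(Y\times B_{\on{Zar}}(G),B^2_{\on{Zar}}(\bK_2))$ projecting to some $Q$, I would lift $Q$ to an element of $\Maps_{\on{based}}(B_{\on{Zar}}(G),B^2_{\on{Zar}}(\bK_2))$, pull this lift back to $Y$, and take the ``difference,'' which lies in the fiber over $0$ and hence comes from $\on{Tors}_{\on{et}}(\Hom(\pi_{1,\on{alg}}(G),\CO^\times_Y))$; this establishes essential surjectivity of the contracted-product map. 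Full faithfulness reduces, since both sides are Picard, to comparing automorphisms of the unit object, and on both sides these are $\Hom$-spaces in $\on{Tors}_{\on{et}}(\Hom(\pi_{1,\on{alg}}(G),\CO^\times_Y))$ by \corref{c:BD 1}.

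The main obstacle I anticipate is verifying that the two actions of $\on{Tors}(\Hom(\pi_{1,\on{alg}}(G),k^\times))$ used to form the contracted product are genuinely intertwined by the constructed maps: namely, the homomorphism of the first part applied at $Y=\on{pt}$, post-composed with pullback from $\on{pt}$ to $Y$, must agree with pullback of torsors along $Y \to \on{pt}$ followed by the homomorphism applied at general $Y$. This compatibility is expected from the functoriality in \corref{c:BD 1}, but is the most delicate piece of bookkeeping; once it is in place, the essential surjectivity and full faithfulness arguments above become formal.
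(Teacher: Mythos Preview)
Your proposal is correct and follows the same route the paper takes: the paper records the corollary simply as ``As a consequence, we obtain'' immediately after \corref{c:BD 1} and the displayed identification of the fiber with $\on{Tors}_{\on{et}}(\Hom(\pi_{1,\on{alg}}(G),\CO^\times_Y))$, giving no further argument. Your fiber-sequence and lift-and-difference description is exactly the formal unpacking of that ``as a consequence,'' and the compatibility of actions you flag as the delicate point is indeed automatic from the functoriality of the Brylinski--Deligne description in $Y$.
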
 

From this corollary, we obtain:

\begin{cor}  \label{c:BD 3}
\begin{multline*}
\pi_i(\Maps_{\on{based}}(Y\times B_{\on{Zar}}(G),B^2_{\on{Zar}}(\bK_2)))\simeq \\
\simeq \begin{cases}
&\on{Quad}(\Lambda_G,\BZ)^{W_G}\times H^1_{\on{et}}(Y,\Hom(\pi_{1,\on{alg}}(G),\CO^\times_Y)) \text{ for } i=0; \\
&H^0(Y,\Hom(\pi_{1,\on{alg}}(G),\CO^\times_Y)) \text{ for } i=1.
\end{cases}
\end{multline*} 
\end{cor}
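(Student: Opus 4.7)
The plan is to read off both homotopy groups directly from \corref{c:BD 2}, combined with the standard fact that a Picard category of torsors computes the first two cohomology groups of its band. I would organize the argument in the following steps.

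First, I would recall that for any sheaf of abelian groups $\CF$ on the \'etale site of $Y$, the Picard category $\on{Tors}_{\on{et}}(\CF)$ has homotopy groups $\pi_0(\on{Tors}_{\on{et}}(\CF))\simeq H^1_{\on{et}}(Y,\CF)$ and $\pi_1(\on{Tors}_{\on{et}}(\CF))\simeq H^0_{\on{et}}(Y,\CF)$ (the latter being the automorphism group of the trivial torsor). Applied to $\CF=\Hom(\pi_{1,\on{alg}}(G),\CO^\times_Y)$, this gives exactly the groups appearing in the statement of the corollary.

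Next, I would use \corref{c:BD 2}. Taking $\pi_1$, both factors on the left contribute, but $\Maps_{\on{based}}(B_{\on{Zar}}(G),B^2_{\on{Zar}}(\bK_2))$ has $\pi_1$ equal to $H^0(\on{pt},\Hom(\pi_{1,\on{alg}}(G),k^\times))$ (by \corref{c:BD 1} for $Y=\on{pt}$), and the amalgamation identifies this with the constant sections of $\Hom(\pi_{1,\on{alg}}(G),\CO^\times_Y)$ inside $H^0(Y,\Hom(\pi_{1,\on{alg}}(G),\CO^\times_Y))$, so on $\pi_1$ the amalgamated product delivers precisely $H^0(Y,\Hom(\pi_{1,\on{alg}}(G),\CO^\times_Y))$ (since $Y$ is connected, these coincide anyway). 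For $\pi_0$, the amalgamation over $\on{Tors}(\Hom(\pi_{1,\on{alg}}(G),k^\times))$ identifies the $\on{Quad}$-part coming from $\Maps_{\on{based}}(B_{\on{Zar}}(G),B^2_{\on{Zar}}(\bK_2))$ with the absolute $\on{Quad}(\Lambda_G,\BZ)^{W_G}$, and the $\on{Tors}_{\on{et}}$-factor contributes $H^1_{\on{et}}(Y,\Hom(\pi_{1,\on{alg}}(G),\CO^\times_Y))$ after quotienting by the image of $H^1(\on{pt},\cdot)=0$. The result is a direct product.

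Equivalently, and perhaps cleaner, I would deduce it from the fibration obtained by mapping to $\on{Quad}(\Lambda_G,\BZ)^{W_G}$: by the discussion preceding \corref{c:BD 1}, this map is surjective on $\pi_0$ with discrete target, so it induces a fiber sequence of Picard categories whose fiber is $\Maps_{\on{based}}(Y\times B_{\on{Zar}}(G),B^2_{\on{Zar}}(\bK_2))^0$. By \corref{c:BD 1} and the equivalence following it, this fiber is equivalent to $\on{Tors}_{\on{et}}(\Hom(\pi_{1,\on{alg}}(G),\CO^\times_Y))$, whose $\pi_0$ and $\pi_1$ are computed in the first step. Since the target is a discrete group, $\pi_1$ agrees with the $\pi_1$ of the fiber, and the sequence on $\pi_0$ is split by \corref{c:BD 2} (which produces a section by pullback of the absolute datum), yielding the asserted product decomposition.

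The only non-formal ingredient is the splitting of the $\pi_0$ sequence, which is precisely the content of \corref{c:BD 2}; everything else is bookkeeping of homotopy groups in a Picard fiber sequence and the standard identification of torsor Picard categories with low-dimensional cohomology.
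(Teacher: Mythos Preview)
Your proposal is correct and follows essentially the same approach as the paper: the paper simply states that \corref{c:BD 3} follows ``from this corollary'' (i.e., from \corref{c:BD 2}) without further elaboration, and you have correctly spelled out the routine extraction of homotopy groups from the amalgamated product in \corref{c:BD 2}, together with the standard identification $\pi_i(\on{Tors}_{\on{et}}(\CF))\simeq H^{1-i}_{\on{et}}(Y,\CF)$.
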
 

\ssec{Reduction to the case of tori}  

\sssec{}  \label{sss:steps general}

Our goal is to construct a homomorphism of Picard groupoids:
\begin{equation}  \label{e:functor for non-complete}
\Maps_{\on{based}}(X\times B_{\on{Zar}}(G),B^2_{\on{Zar}}(\bK_2))\to \on{FactLine}(\Gr_{G,\Ran}(X)). 
\end{equation}

Given \corref{c:BD 2} and the already constructed map 
$$\Maps_{\on{based}}(B_{\on{Zar}}(G),B^2_{\on{Zar}}(\bK_2))\to \on{FactLine}(\Gr_{G,\Ran}(X))$$
(see \secref{s:construction abs}), we need to perform the following two steps:

\begin{itemize}

\item Construct a homomorphism of Picard categories
$$\on{Tors}_{\on{et}}(\Hom(\pi_{1,\on{alg}}(G),\CO_X^\times))\to \on{FactLine}(\Gr_{G,\Ran}(X));$$

\item Establish the commutativity of the diagram 
\begin{equation} \label{e:trivial diagram}
\CD
\on{Tors}(\Hom(\pi_{1,\on{alg}}(G),k^\times))   @>>>   \on{Tors}_{\on{et}}(\Hom(\pi_{1,\on{alg}}(G),\CO_X^\times)) \\
@VVV   @VVV  \\
\Maps_{\on{based}}(B_{\on{Zar}}(G),B^2_{\on{Zar}}(\bK_2))  @>>> \on{FactLine}(\Gr_{G,\Ran}(X))
\endCD
\end{equation} 

\end{itemize}

\sssec{}

We will carry out these two steps using an additional choice. Namely, we will choose a short exact sequence
\begin{equation} \label{e:cover G}
1\to T \to \wt{G}\to G\to 1,
\end{equation}
where $T$ is a torus, and $\wt{G}$ is a reductive group whose derived group $\wt{G}'$ is simply connected.

\medskip

The fact that the result is independent of this choice will be evident from the construction. 

\sssec{}  

Note that the torus $T$, being commutative, can be regarded as a group-object in the category of groups. Hence,
$\Gr_{T,\Ran}$ acquires a structure a relative group ind-scheme over $\Ran$. 

\medskip

We have an action of $T$, viewed as a group-object in he category of groups, on $\wt{G}$,
and the action map
$$T\times \wt{G}\to \wt{G}\underset{G}\times \wt{G}$$
is an isomorphism. 

\medskip

Hence, we obtain an action the relative group ind-scheme $\Gr_{T,\Ran}$ over $\Ran$ on $\Gr_{\wt{G},\Ran}$ so that the resulting map
\begin{equation} \label{e:fiber Gr}
\Gr_{T\times \wt{G},\Ran}\simeq
\Gr_{T,\Ran}\underset{\Ran}\times \Gr_{\wt{G},\Ran}\to \Gr_{\wt{G},\Ran}\underset{\Gr_{G,\Ran}}\times \Gr_{\wt{G},\Ran}
\end{equation}
is an isomorphism. 

\medskip

Note also that the map 
$$\Gr_{\wt{G},\Ran}\to \Gr_{G,\Ran}$$
is surjective in the topology where we take as coverings finite surjective maps.

\begin{rem}
One can show that the above map is surjective in the \'etale topology, but the proof that we have in mind 
is quite involved, so we will avoid using this fact.
\end{rem}

\sssec{}  \label{sss:derived}

Consider the topology of finite surjective maps in the setting of \emph{derived schemes}. We claim:

\begin{prop} \label{p:fin surj}
Let $f:\wt{S}\to S$ be a map of derived schemes (assumed locally almost of finite type), 
such that the resulting map of classical schemes $\wt{S}_{\on{cl}}\to S_{\on{cl}}$ is finite an surjective. 
Then the groupoid of line bundles on $S$ maps isomorphically to the totalization of the
cosimplicial groupoid of line bundles on the \v{C}ech nerve of $f$. 
\end{prop}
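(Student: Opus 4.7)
\medskip

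The plan is to reduce the statement to a classical descent result and then invoke descent of line bundles along finite surjective maps. For the reduction, two observations suffice. First, an invertible object in $\QCoh(S)$ is necessarily concentrated in cohomological degree zero, and the canonical inclusion $S_{\on{cl}}\hookrightarrow S$ induces an equivalence between the groupoid of line bundles on $S$ and the groupoid of line bundles on $S_{\on{cl}}$. Second, classical truncation is compatible with (derived) fiber products in the sense that $\pi_0(A\otimes^{\mathbb L}_C B)\simeq \pi_0(A)\otimes_{\pi_0 C}\pi_0(B)$, so termwise classical truncation of the derived \v{C}ech nerve of $f$ yields the ordinary \v{C}ech nerve of $f_{\on{cl}}\colon \wt S_{\on{cl}}\to S_{\on{cl}}$. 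Consequently, the cosimplicial groupoids of line bundles on the two nerves agree, and the proposition reduces to the purely classical assertion that line bundles satisfy \v{C}ech descent along a finite surjective morphism $g\colon \wt T\to T$ of qcqs classical schemes.

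\medskip

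For this classical statement, the question is Zariski local on $T$, so one may assume $T$ affine; then each term of the \v{C}ech nerve is affine as well. The content is descent for the sheaf $\BG_m$ along $g$. One concludes by invoking the $v$-descent (equivalently arc-descent) of $\BG_m$ on qcqs schemes, due to Bhatt--Mathew: the $v$-topology refines the topology of finite surjective covers, and $B\BG_m$ is a $v$-sheaf of groupoids, which is precisely the asserted equivalence. Alternatively, one may argue by Noetherian induction: generic flatness makes $g$ fppf over a dense open of $T$, where ordinary faithfully flat descent applies, after which one handles the lower-dimensional complement by induction on $\dim T$, using that the problem is Zariski local at every stage.

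\medskip

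The main obstacle is the second step. Because $g$ is not assumed flat, classical faithfully flat descent does not apply directly, so the descent of $\BG_m$ along finite surjective non-flat morphisms is a substantive input rather than a formal consequence of the standard descent machinery. By contrast, the derived-to-classical reduction of the first step is essentially bookkeeping once one observes that both invertibility and the formation of \v{C}ech nerves are insensitive to derived structure in the relevant sense.
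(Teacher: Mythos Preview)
Your reduction to the classical case has a genuine gap. The claim that the inclusion $S_{\on{cl}}\hookrightarrow S$ induces an equivalence on groupoids of line bundles is false at the level of $\infty$-groupoids: for $S=\Spec(A)$ derived affine, the automorphism space of the trivial line bundle is $A^\times$ (the space of units in the connective ring $A$), which has $\pi_i(A^\times)\simeq \pi_i(A)$ for $i\geq 1$, whereas on $S_{\on{cl}}$ the automorphism space is the discrete group $(\pi_0 A)^\times$. Thus pullback along $S_{\on{cl}}\to S$ is not fully faithful on line bundles, and the cosimplicial groupoids you compare are genuinely different. Since the \v{C}ech nerve of $f$ typically has nontrivial derived structure (the maps are not flat), this discrepancy cannot be ignored: the higher homotopy of $\CO^\times$ enters the totalization. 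Your observation that $\pi_0$ of a derived tensor product agrees with the classical tensor product is correct, but it only tells you that the \emph{underlying classical} \v{C}ech nerves match, not that the groupoids of line bundles do.

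The paper's proof takes an entirely different route that sidesteps this issue: it observes that for any derived affine scheme $S$ locally almost of finite type, tensoring with the dualizing complex $\omega_S$ gives a fully faithful embedding $\on{Perf}(S)\hookrightarrow \IndCoh(S)$, and then invokes descent for $\IndCoh$ along proper surjective maps (this is \cite[Proposition 8.2.3]{Ga}). This approach works uniformly in the derived setting without any reduction to classical schemes. Your classical step (invoking $v$-descent of $\BG_m$ \`a la Bhatt--Mathew, or Noetherian induction via generic flatness) is perfectly sound for classical schemes and would recover the classical case of the proposition, but the bridge you built to get there does not hold.
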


\begin{proof}

For any derived affine scheme $S$ (assumed locally almost of finite type) the functor of tensoring by
the dualizing object
$$\CF\mapsto \CF\otimes \omega_S$$
defines a \emph{fully faithful} functor
$$\on{Perf}(S)\to \IndCoh(S).$$

Hence, it suffices to prove the corresponding descent statement for $\IndCoh$. However, in the latter
case, it is given by \cite[Proposition 8.2.3]{Ga}. 

\end{proof} 

\begin{cor}
There exists an equivalence of categories between $\on{FactLine}(\Gr_{G,\Ran})$ and the category
of objects in $\on{FactLine}(\Gr_{\wt{G},\Ran})$, factorizably equivariant with respect $\Gr_{T,\Ran}$. 
\end{cor}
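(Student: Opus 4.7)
The plan is to apply the descent proposition to the map $f\colon \Gr_{\wt{G},\Ran}\to \Gr_{G,\Ran}$, interpret a line bundle on the target as a line bundle on the source together with descent data along the \v{C}ech nerve of $f$, and then recognize this descent data as a $\Gr_{T,\Ran}$-equivariant structure via the isomorphism \eqref{e:fiber Gr}.

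Concretely, I would first reduce to one finite set $I$ at a time. A factorization line bundle on $\Gr_{G,\Ran}$ amounts to a compatible family of line bundles $\CL^I$ on the ind-schemes $\Gr_{G,I}$, so it suffices to prove a descent equivalence for each $I$ separately and then check the compatibility with the transition maps \eqref{e:line bundle Gr} and the factorization isomorphisms \eqref{e:fact line bundle}. To apply the descent proposition, which is stated for maps of (derived) schemes, I would choose compatible exhaustions of $\Gr_{G,I}$ and $\Gr_{\wt{G},I}$ by closed subschemes and carry out descent at each bounded level. The assertion preceding the corollary, that $\Gr_{\wt{G},\Ran}\to \Gr_{G,\Ran}$ is surjective in the topology of finite surjective maps, together with the fact that the action of $T$ preserves bounded pieces up to finite enlargement, is what legitimizes this step; the derived vs. classical ambiguity pointed out in Remark \ref{r:Gr class} causes no issue since the descent proposition is stated in the derived setting.

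Next I would identify the descent data. Iterating \eqref{e:fiber Gr} yields, for each $n\geq 1$,
$$
\underbrace{\Gr_{\wt{G},\Ran}\underset{\Gr_{G,\Ran}}\times\cdots\underset{\Gr_{G,\Ran}}\times \Gr_{\wt{G},\Ran}}_{n+1}
\;\simeq\; (\Gr_{T,\Ran})^{n}\underset{\Ran}\times \Gr_{\wt{G},\Ran},
$$
with face and degeneracy maps given by the action and projections of the relative group ind-scheme $\Gr_{T,\Ran}$ on $\Gr_{\wt{G},\Ran}$. Thus the totalization that computes descent data on a line bundle $\CL_{\wt{G}}$ on $\Gr_{\wt{G},\Ran}$ is exactly the totalization computing a $\Gr_{T,\Ran}$-equivariant structure on $\CL_{\wt{G}}$, which is the desired identification.

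Finally, the factorization side should come along for free: the isomorphism \eqref{e:fiber Gr} is induced by the short exact sequence \eqref{e:cover G} and is compatible with the disjoint-points factorization of all three Grassmannians $\Gr_{G,\Ran}$, $\Gr_{\wt{G},\Ran}$, $\Gr_{T,\Ran}$ via \eqref{e:fact Gr Ran}. Hence the descent equivalences for individual $I$'s, together with their compatibilities under $I\twoheadrightarrow J$ and under partitions $I=I^1\sqcup I^2$, assemble into an equivalence between $\on{FactLine}(\Gr_{G,\Ran})$ and the category of factorizably $\Gr_{T,\Ran}$-equivariant objects of $\on{FactLine}(\Gr_{\wt{G},\Ran})$. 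The main obstacle is really the first step: verifying that the descent proposition, stated for a single finite surjective map of schemes, transfers faithfully to the ind-scheme situation over $\Ran$ in a manner that is functorial in $I$ and preserves factorization; the remainder of the argument is an unwinding of what descent data means via the iterated \eqref{e:fiber Gr}.
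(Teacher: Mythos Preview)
Your proposal is correct and follows essentially the same approach as the paper: the paper records this as an immediate corollary of the descent proposition, noting only (in the subsequent remark) that the isomorphism \eqref{e:fiber Gr} holds at the derived level. You have spelled out the reduction to fixed $I$ and to bounded pieces of the ind-schemes, as well as the identification of the \v{C}ech nerve with the action simplicial object, all of which the paper leaves implicit.
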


\begin{rem}
Here we are using the fact that the isomorphism \eqref{e:fiber Gr} holds at the \emph{derived level},
see Remark \ref{r:Gr class}.
\end{rem}

\sssec{}

Hence, if we can perform the two steps in \secref{sss:steps general} for groups of the form 
$\wt{G}\times T^n$, in a way compatible with the maps in the simplicial group
that encodes the action of $T$ on $\wt{G}$, this would imply the two steps in \secref{sss:steps general} for $G$. 

\sssec{}

Let $\wt{T}$ denote the quotient of $\wt{G}$ by its derived group. Note that the map
$$\pi_{1,\on{alg}}(\wt{G})\to \pi_{1,\on{alg}}(\wt{T})$$
is an isomorphism. 

\medskip

Hence, in order to carry out the two steps in \secref{sss:steps general} for groups of the form $\wt{G}\times T^n$ 
(in a way compatible with the simplicial structure), it suffices to so for $\wt{G}$ replaced by $\wt{T}$. 

\medskip

The compatibility with the simplicial structure will be automatically encoded by the functoriality of the construction 
with respect to maps of tori.  

\ssec{Construction for tori}

\sssec{}

Recall that $\on{Tors}_{\on{et}}(\Hom(\pi_{1,\on{alg}}(T),\CO_Y^\times))$ is a Picard sub-groupoid of 
$$\Maps_{\on{based}}(X\times B_{\on{Zar}}(T),B^2_{\on{Zar}}(\bK_2)).$$ Hence, in order to perform
the two steps of \secref{sss:steps general}, it would suffice to construct the initial map
\begin{equation} \label{e:constr for tori}
\Maps_{\on{based}}(X\times B_{\on{Zar}}(T),B^2_{\on{Zar}}(\bK_2))\to  \on{FactLine}(\Gr_{T,\Ran})
\end{equation} 
in the case when $G=T$ is a torus. 

\medskip

We will perform this construction in the present subsection. 

\sssec{}

Let $\Lambda\simeq \pi_{1,\on{alg}}(T)$ denote the lattice of cocharacters of $T$. We define the prestack
$\Gr_{\Lambda,\Ran}$ as follows. 

\medskip

For an affine test-scheme $S$, its $S$-points are pairs $(I,\lambda_I)$,
where $I\subset \Hom(S,X)$ is an $S$-point of $\Ran(X)$, and $\lambda_I$ is a map $I\to \Lambda$. 

\medskip

For a map $f:S'\to S$ and $I'\subset \Hom(S',X)$ being the image of $i$, we let
$$\lambda_{I'}:I'\to \Lambda$$ be defined by
$$\lambda_{I'}(i')=\underset{i\in I,\, i\mapsto i'}\Sigma\, \lambda_I(i).$$

\sssec{}

We have a natural projection $$\Gr_{\Lambda,\Ran}\to \Ran,$$ so we can talk about the Picard category
$$\on{FactLine}(\Gr_{\Lambda,\Ran}(X))$$ of factorization line bundles on $\Gr_{\Lambda,\Ran}$.

\sssec{}

We have a naturally defined map 
\begin{equation} \label{e:Gr T comb}
\Gr_{\Lambda,\Ran}\to \Gr_{T,\Ran}
\end{equation} 
that sends $(I,\lambda_I)$ to the $T$-bundle on $S\times X$ given by
$$\underset{i]in I}\bigotimes\, \lambda_I(i)\cdot \CO(\on{Graph}_{x_i}),$$
where we denote by $x_i$ the map $S\to X$ corresponding to $i\in I$, so that its 
graph $\on{Graph}_{x_i}$ is a Cartier divisor in $S\times X$. 

\medskip

It is easy to see that the map \eqref{e:Gr T comb} is an isomorphism after sheafification
with respect to the topology generated by finite surjective maps. Hence, by \propref{p:fin surj}, 
the pullback map
$$\on{FactLine}(\Gr_{T,\Ran})\to \on{FactLine}(\Gr_{\Lambda,\Ran})$$
is an equivalence.

\sssec{}

Hence, in order to construct \eqref{e:constr for tori}, it suffices to construct the corresponding map
$$\Maps_{\on{based}}(X\times B_{\on{Zar}}(T),B^2_{\on{Zar}}(\bK_2))\to \on{FactLine}(\Gr_{\Lambda,\Ran}).$$

However, the latter follows from \secref{ss:smooth}: indeed, the prestack $\Gr_{\Lambda,\Ran}$ is a colimit
of smooth schemes.

\section{Some conjectures}  \label{s:mot}

\ssec{Conjecture about equivalence}

\sssec{}

In the preceding sections we constructed a map of Picard groupoids:
\begin{equation} \label{e:the map}
\Maps_{\on{based}}(X\times B_{\on{Zar}}(G),B^2_{\on{Zar}}(\bK_2))\to \on{FactLine}(\Gr_{G,\Ran}).
\end{equation} 

We propose:

\begin{conj} \label{c:main}
The map \eqref{e:the map} is an isomorphism of Picard groupoids.
\end{conj}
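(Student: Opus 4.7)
Since both sides are Picard groupoids (in fact, both are truncated to degrees $[-1,0]$), it suffices to show that \eqref{e:the map} induces a bijection on $\pi_0$ and on $\pi_1$. My plan is to reduce to two well-understood special cases using the exact sequence $1\to T\to\wt G\to G\to 1$ of \eqref{e:cover G} (with $\wt G'$ simply connected), and then to handle those cases directly by matching explicit classifications.

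The first step is reduction. As in Section \ref{s:construction gen}, the category $\on{FactLine}(\Gr_{G,\Ran})$ identifies with objects of $\on{FactLine}(\Gr_{\wt G,\Ran})$ equipped with a compatible factorizable $\Gr_{T,\Ran}$-equivariance (Section \ref{sss:derived}), and the Picard category $\Maps_{\on{based}}(X\times B_{\on{Zar}}(G),B^2_{\on{Zar}}(\bK_2))$ admits an analogous description via Corollary \ref{c:BD 2}. By functoriality, the map \eqref{e:the map} respects these presentations, so it is enough to prove the equivalence for $\wt G$ and for $T$, compatibly with the simplicial group structure. Further, using the quotient $\wt G\to\wt T$ and the isomorphism $\pi_{1,\on{alg}}(\wt G)\to\pi_{1,\on{alg}}(\wt T)$, one splits the problem into the case of a torus and the case of a semisimple simply-connected group.

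For a torus $T$, Section \ref{ss:expl} shows that our construction realizes the Beilinson--Drinfeld map \eqref{e:construction for tori BD} on the $\on{Tors}_{\on{Zar}}(X,T^\vee)$-part, so we can invoke \cite{BD} to conclude that our map is an equivalence on this part; the quadratic-form component is handled by verifying on the generator $T=\BG_m$ that the factorization line bundle produced from the standard quadratic form $n\mapsto n^2$ is the determinant-of-cohomology line bundle, which is well-known to generate factorization line bundles of nontrivial level on $\Gr_{\BG_m,\Ran}$. For a semisimple simply-connected $G$, we have $\pi_{1,\on{alg}}(G)=0$, so Corollary \ref{c:BD 3} reduces the left-hand side to $\on{Quad}(\Lambda_G,\BZ)^{W_G}$ on $\pi_0$ and trivial $\pi_1$. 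It is a classical computation (going back to \cite{BD}, cf.\ Faltings) that $\on{FactLine}(\Gr_G)$ for such $G$ is \emph{also} classified by $\on{Quad}(\Lambda_G,\BZ)^{W_G}$: one checks surjectivity by realizing each quadratic form via a suitable determinant line bundle associated to a faithful representation, and injectivity plus triviality of $\pi_1$ by restricting to the connected component and using that $\Gr_G$ is ind-simply-connected and that global sections of $\CO^\times$ on $\Gr_G$ over the point are constant.

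The main obstacle I anticipate is the semisimple simply-connected case: one must prove that the quadratic form extracted from $\CL_{\Gr}$ by our global construction in Section \ref{ss:global} agrees with the Weyl-invariant integer quadratic form classifying $\CL_{\Gr}$ as a factorization line bundle. Concretely, fix a cocharacter $\lambda:\BG_m\to T\subset G$ and consider the induced map $\Gr_{\BG_m,\Ran}\to\Gr_{G,\Ran}$; one must compute the restriction of $\CL_{\Gr}$ along this map, identify it with the factorization line bundle on $\Gr_{\BG_m}$ attached to $Q(\lambda)$, and check this matches the value of the quadratic form functorially read off from the Brylinski--Deligne datum. This central-charge computation should be doable by reducing to the universal case $G=SL_2$ via $\lambda$ and comparing with the direct calculation in Section \ref{ss:expl}, but the bookkeeping for general $G$ (ensuring $W_G$-invariance and integrality emerge correctly from the Gersten-style trace) is where real work lies.
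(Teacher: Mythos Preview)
The statement you are attempting to prove is \conjref{c:main}, which the paper explicitly leaves open; there is no proof in the paper to compare against. The paragraph following the conjecture only remarks that for tori the homotopy groups of both sides match via the $\theta$-data description of \cite{BD}, and says it ``should not be difficult'' to check that the map induces the identity on these---but even this is not carried out. The acknowledgements mention that the conjecture has been taken up by others as a separate project.

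Your proposed reduction has a real gap. The descent from $G$ to $\wt G$ via $\Gr_{T,\Ran}$-equivariance is in the paper and is fine. But the further step, ``using the quotient $\wt G\to\wt T$\ldots one splits the problem into the case of a torus and the case of a semisimple simply-connected group,'' does not go through as stated: $\wt G$ is not the product of $\wt G'$ and $\wt T$ (the map $\wt G'\times Z(\wt G)^\circ\to\wt G$ is only an isogeny), and on the $\on{FactLine}$ side there is no evident d\'evissage separating a ``quadratic form'' piece from a ``torsor'' piece. The paper's use of $\wt G\to\wt T$ in \secref{s:construction gen} serves only to \emph{construct} the torsor part of the map \eqref{e:the map}, not to prove any equivalence. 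Separately, your assertion that $\on{FactLine}(\Gr_{G,\Ran})\simeq\on{Quad}(\Lambda_G,\BZ)^{W_G}$ for simply-connected $G$ is ``classical'' overstates what is readily available: the references you gesture at compute $\on{Pic}$ of the affine Grassmannian over a point, whereas here one must control line bundles on all the $\Gr_{G,X^I}$ compatibly with factorization and rule out automorphisms coming from the base. Finally, the ``main obstacle'' you flag---matching the quadratic form extracted via the Gersten-style trace with the one classifying the factorization line bundle---is the heart of the matter, not a bookkeeping detail; without it you have not shown that \eqref{e:the map} is even injective on $\pi_0$.
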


\sssec{}

Assume for a moment that $G=T$ is a torus.  Then one can show that the Picard category $\on{FactLine}(\Gr_{T,\Ran})$
identifies with the category of \emph{even} $\theta$-data of \cite[Sect. 3.10.3]{BD}. In particular,
$$
\pi_i(\on{FactLine}(\Gr_{T,\Ran}))=
\begin{cases}
&\on{Quad}(\Lambda,\BZ)\times H^1(X,\Hom(\Lambda,\CO^\times_X)) \text{ for } i=0;\\
&\Gamma(X,\Hom(\Lambda,\CO^\times_X)) \text{ for } i=1.
\end{cases}
$$

\medskip

According to \corref{c:BD 3}, the Picard groupoid $\Maps_{\on{based}}(X\times B_{\on{Zar}}(T),B^2_{\on{Zar}}(\bK_2))$
has homotopy groups given by the same expression. 

\medskip

It should not be difficult to see by unwinding the constructions, that in terms of these identifications, 
the map \eqref{e:the map} induces the identity maps on $\on{Quad}(\Lambda,\BZ)\times H^1(X,\Hom(\Lambda,\CO^\times_X))$ and 
$\Gamma(X,\Hom(\Lambda,\CO^\times_X))$, 
implying that \conjref{c:main} holds for tori.

\ssec{Factorizable line bundes vs factorizable gerbes}

\sssec{}

Let $\ell$ be an integer prime to $\on{char}(k)$. 
Following \cite{GL}, we consider the category (in fact, a Picard 2-category) $\on{FactGerbe}_{\mu_\ell}(\Gr_{G,\Ran})$
of factorizable \'etale $\mu_\ell$-gerbes on $\Gr_{G,\Ran}$.  According to \cite[Proposition 3.2.2]{GL}, we have a canonical equivalance
\begin{equation} \label{e:param gerbe}
\Maps_{\on{based}}(X\times B_{\on{Zar}}(G),B^4_{\on{et}}(\mu^{\otimes 2}_\ell))\simeq \on{FactGerbe}_{\mu_\ell}(\Gr_{G,\Ran}),
\end{equation} 
where we also note that 
$$\Maps_{\on{based}}(X\times B_{\on{Zar}}(G),B^4_{\on{et}}(\mu^{\otimes 2}_\ell))\simeq
\Maps_{\on{based}}(X\times B_{\on{et}}(G),B^4_{\on{et}}(\mu^{\otimes 2}_\ell)).$$

\sssec{}

The Kummer sequence of \'etale sheaves (for any scheme $Y$)
$$0\to \mu_\ell\to \CO_Y^\times \overset{f\mapsto f^\ell}\longrightarrow \CO_Y^\times\to 0$$
defines a map (in fact, a monomorphism) of Picard 2-categories
$$\{\text{Line bundles on }Y\}/\ell\simeq \{\mu_\ell\text{-gerbes on }Y\}.$$

\medskip

In particular, we obtain a map (in fact, a monomorphism) 
\begin{equation} \label{e:Kummer on Gr mod l}
\on{FactLine}(\Gr_{G,\Ran})/\ell \to \on{FactGerbe}_{\mu_\ell}(\Gr_{G,\Ran}).
\end{equation}

\sssec{}

Hence, we obtain a canonical map 
\begin{equation} \label{e:from K to mu mod l}
\Maps_{\on{based}}(X\times B_{\on{Zar}}(G),B^2_{\on{Zar}}(\bK_2))/\ell\to \Maps_{\on{based}}(X\times B_{\on{Zar}}(G),B^4_{\on{et}}(\mu^{\otimes 2}_\ell)),
\end{equation}
that makes the diagram
\begin{equation} \label{e:param diag}
\CD
\Maps(X\times B_{\on{Zar}}(G),B^2_{\on{Zar}}(\bK_2)))/\ell    @>{\text{\eqref{e:from K to mu mod l}}}>>  
\Maps(X\times B_{\on{Zar}}(G),B^4_{\on{et}}(\mu^{\otimes 2}_\ell)) \\
@V{\text{\eqref{e:the map}}/\ell}VV   @V{\text{\eqref{e:param gerbe}}}V{\sim}V   \\
\on{FactorLine}(\Gr_{G,\Ran})/\ell  @>{\text{\eqref{e:Kummer on Gr mod l}}}>>  \on{FactGerbe}_{\mu_\ell}(\Gr_{G,\Ran})
\endCD
\end{equation}
commute. 

\begin{rem}
Note that above we constructed the map \eqref{e:from K to mu mod l} by appealing to factorizable line bundles/gerbes,
rather than ``abstractly", i.e., by mapping various target prestacks to one-another.  In fact, we do not know how to construct
such a map directly: 
for example, it is easy to see that there \emph{does not} exist a non-trivial map
$$\bK_2/\ell \to B^2_{\on{et}}(\mu_\ell^{\otimes 2}).$$
(indeed, the \'etale sheafification of $\bK_2/\ell$ is zero, by Suslin's rigidity). 

\medskip

However, in the next subsection, we will give a construction of a certain map (that conjecturally equals)
\eqref{e:from K to mu mod l}, using motivic cohomology.

\end{rem}

\sssec{}

Recall (see \cite[Theorem 3.2.6]{GL}) that the \'etale cohomology of $B_{\on{Zar}}(G)$ is given by:
$$
H^i(B_{\on{Zar}}(G),\BZ_\ell(2))= 
\begin{cases}
&\on{Quad}(\Lambda_G,\BZ_\ell)^W \text{ for } i=4; \\
&\Ext^1(\pi_{1,\on{alg}}(G),\BZ_\ell)(1) \text{ for } i=3; \\
&\Hom(\pi_{1,\on{alg}}(G),\BZ_\ell)(1) \text{ for } i=2; \\
&0 \text{ for } i=1.
\end{cases}
$$

From here, by K\"unneth formula, we obtain:  
\begin{multline*} 
H^i(B_{\on{Zar}}(G)\times X,\BZ_\ell(2))= \\
\begin{cases}
&\on{Quad}(\Lambda_G,\BZ_\ell)^W \times H^2(X,\Hom(\pi_{1,\on{alg}}(G),\BZ_\ell)(1)) \times
H^1(X,\Ext^1(\pi_{1,\on{alg}}(G),\BZ_\ell)(1)) \text{ for } i=4;\\
&H^1(X,\Hom(\pi_{1,\on{alg}}(G),\BZ_\ell)(1)) \times \Ext^1(\pi_{1,\on{alg}}(G),\BZ_\ell)(1) \text{ for } i=3; \\
&\Hom(\pi_{1,\on{alg}}(G),\BZ_\ell)(1)\text{ for } i=2; \\
&0 \text{ for } i=1.
\end{cases}
\end{multline*} 

Hence, the homotopy groups of 
$$\pi_i(\Maps_{\on{based}}(X\times B_{\on{Zar}}(G),B^4_{\on{et}}(\BZ_\ell(2)))/\ell)$$
are given by

\begin{equation} \label{e:mod ell homotopy}
\begin{cases}
&\left(\on{Quad}(\Lambda_G,\BZ)^W\underset{\BZ}\otimes \BZ/\ell\right) \times H^2(X,\Hom(\pi_{1,\on{alg}}(G),\mu_\ell)) \times
H^1(X,\Ext^1(\pi_{1,\on{alg}}(G),\mu_\ell)) \text{ for } i=0;\\
&H^1(X,\Hom(\pi_{1,\on{alg}}(G),\mu_\ell)) \times \Ext^1(\pi_{1,\on{alg}}(G),\mu_\ell) \text{ for } i=1; \\
&\Hom(\pi_{1,\on{alg}}(G),\mu_\ell)\text{ for } i=2; \\
\end{cases}
\end{equation}

\sssec{}

Note now that we have a fully faithful map of Picard 2-groupoids
\begin{equation} \label{e:l business}
\Maps_{\on{based}}(X\times B_{\on{Zar}}(G),B^4_{\on{et}}(\BZ_\ell(2)))/\ell\to 
\Maps_{\on{based}}(X\times B_{\on{Zar}}(G),B^4_{\on{et}}(\mu^{\otimes 2}_\ell)).
\end{equation}

Its essential image corresponds to the subgroup of
$$H^4_{\on{et}}(X\times B_{\on{Zar}}(G);X\times \on{pt},\mu^{\otimes 2}_\ell)$$
equal to the image of the map 
\begin{equation} \label{e:l business bis}
H^4_{\on{et}}(X\times B_{\on{Zar}}(G);X\times \on{pt},\BZ_\ell(2))\to 
H^4_{\on{et}}(X\times B_{\on{Zar}}(G);X\times \on{pt},\mu^{\otimes 2}_\ell),
\end{equation}
which is also the kernel of the map 
$$H^4_{\on{et}}(X\times B_{\on{Zar}}(G);X\times \on{pt},\mu^{\otimes 2}_\ell) \to 
H^5_{\on{et}}(X\times B_{\on{Zar}}(G);X\times \on{pt},\BZ_\ell(2))$$
in the long exact cohomology sequence associated with the short exact sequence of \'etale sheaves
$$0\to \BZ_\ell(2) \overset{\ell}\to \BZ_\ell(2) \to \mu^{\otimes 2}_\ell\to 0.$$

\sssec{}

Observe that that \corref{c:BD 3} implies that the homotopy groups of $$\Maps_{\on{based}}(X\times B_{\on{Zar}}(G),B^2_{\on{Zar}}(\bK_2))/\ell$$
are also canonically identified with \eqref{e:mod ell homotopy}. In light of this, we propose:

\begin{conj} \label{c:compat param}
The map \eqref{e:from K to mu mod l} factors as
\begin{multline*} 
\Maps_{\on{based}}(X\times B_{\on{Zar}}(T),B^2_{\on{Zar}}(\bK_2))/\ell\to \Maps_{\on{based}}(X\times B_{\on{Zar}}(G),B^4_{\on{et}}(\BZ_\ell(2)))/\ell \hookrightarrow \\
\hookrightarrow \Maps_{\on{based}}(X\times B_{\on{Zar}}(G),B^4_{\on{et}}(\mu^{\otimes 2}_\ell)),
\end{multline*} 
and the first arrow is an equivalence. 
\end{conj}

\begin{rem}

Recall from \cite[Theorem 3.2.6]{GL} that the cohomology $H^i(B_{\on{Zar}}(G),\mu_\ell^{\otimes 2})$ is given by
$$
\begin{cases}
&\on{Quad}(\Lambda_G,\BZ/\ell)^W_{\on{restr}} \text{ for } i=4; \\
&\Ext^1(\pi_{1,\on{alg}}(G),\mu_\ell) \text{ for } i=3; \\
&\Hom(\pi_{1,\on{alg}}(G),\mu_\ell) \text{ for } i=2; \\
&0 \text{ for } i=1,
\end{cases}
$$
where 
$$\on{Quad}(\Lambda_G,\BZ/\ell)^W_{\on{restr}}\subset \on{Quad}(\Lambda_G,\BZ/\ell)^W$$
is the subgroup defined in \cite[Sect. 3.2.2]{GL}.

\medskip

Hence, by K\"unneth formula, we obtain:
\begin{multline*} 
H^i(B_{\on{Zar}}(G)\times X,\mu_\ell^{\otimes 2})= \\
\begin{cases}
&\on{Quad}(\Lambda_G,\BZ/\ell)^W_{\on{restr}}\times H^2(X,\Hom(\pi_{1,\on{alg}}(G),\mu_\ell)) \times
H^1(X,\Ext^1(\pi_{1,\on{alg}}(G),\mu_\ell)) \text{ for } i=4;\\
&H^1(X,\Hom(\pi_{1,\on{alg}}(G),\mu_\ell)) \times \Ext^1(\pi_{1,\on{alg}}(G),\mu_\ell) \text{ for } i=3; \\
&\Hom(\pi_{1,\on{alg}}(G),\mu_\ell)\text{ for } i=2; \\
\end{cases}
\end{multline*} 

Hence, the image of the map \eqref{e:l business bis} corresponds to the subgroup
$$\on{Quad}(\Lambda_G,\BZ)^W\underset{\BZ}\otimes \BZ/\ell\subset \on{Quad}(\Lambda_G,\BZ/\ell)^W_{\on{restr}}.$$

This inclusion is an equality in many cases; in particular whenever the derived group of $G$ is simply-connected. 

\end{rem} 
\ssec{Motivic cohomology vs K-theory}  \label{ss:mot}

\sssec{}

For a smooth scheme $Y$, let $\BZ_{\on{mot}}(n)\in \on{Shv}_{\on{Zar}}(Y,\on{Ab})$ be Voevodsky's motivic complex,
see \cite[Lecture 3]{MVW}. It is known to live in the cohomological degrees $\leq n$. 

\sssec{}

Define the complex
$$\Gamma(X\times B_{\on{Zar}}(G),\BZ_{\on{mot}}(n))$$
as the totalization of 
$$\Gamma(X\times B^\bullet(G),\BZ_{\on{mot}}(n)),$$
where $B^\bullet(G)$ is the standard simplicial model of $B(G)$
$$...G\rightrightarrows \on{pt}.$$

\sssec{} \label{sss:mot to K}

Recall that for any smooth scheme $Y$, we have a canonical map in $\Shv_{\on{Zar}}(Y,\on{Ab})$
\begin{equation} \label{e:from mot to K}
\BZ_{\on{mot}}(2)[2]\to \CK_2|_Y.
\end{equation}
This follows, e.g., from \cite[Theorem 5.1]{MVW}, combined
with the Gersten resolution (see \cite[Corollary 6.3.3]{BV}). 

\medskip

By construction, the map
$$\on{Fib}(\BZ_{\on{mot}}(2)[2])\to \CK_2|_Y) \to (j_Y)_*\circ (j_Y)^*\left(\on{Fib}(\BZ_{\on{mot}}(2)[2])\to \CK_2|_Y)\right),$$
is an isomorphism, where $\eta_Y$ is the embedding of the generic point of $Y$, where we note that 
$(j_Y)^*\left(\on{Fib}(\BZ_{\on{mot}}(2)[2]\to \CK_2|_Y)\right)$ is a complex living in \emph{negative} cohomological degrees. 

\medskip

From here, it follows that \eqref{e:from mot to K} induces an isomorphism the $\tau^{\geq 0}$ truncations, and so does the induced map 
\begin{equation} \label{e:from mot to K Gamma}
\Gamma(Y,\BZ_{\on{mot}}(2)[2])\to \CK_2(Y). 
\end{equation}

\medskip

Note also that the complexes in \eqref{e:from mot to K Gamma} have cohomologies in degrees $\leq 2$, with $H^2(-)$ being $\on{CH}_2(Y)$. 

\sssec{}

From \eqref{e:from mot to K Gamma}  we obtain a map
\begin{equation} \label{e:from mot to K BG}
\Gamma(X\times B_{\on{Zar}}(G),\BZ_{\on{mot}}(2)[2])\to  
\CK_2(X\times B_{\on{Zar}}(G)). 
\end{equation}
(Note that it is no longer clear that the two sides in \eqref{e:from mot to K BG} have cohomologies in degrees $\leq 2$.)

\medskip

In the next theorem, $X$ can be an arbitrary smooth scheme over $k$. 

\begin{thm} \label{t:mot}
The map \eqref{e:from mot to K BG} defines an equivalence of the $\tau^{\geq 0}$ truncations. 
\end{thm}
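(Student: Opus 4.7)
The plan is to follow the approach of \cite{EKLV}, reducing the theorem to a cohomology vanishing statement for an auxiliary Zariski sheaf. The starting input is the sheaf-level isomorphism \eqref{e:from mot to K}: on a smooth scheme $Y$, the map $\BZ_{\on{mot}}(2)[2]|_Y \to \CK_2|_Y$ is an equivalence on $\tau^{\geq 0}$. A direct computation of the sheaf cohomologies of $\BZ_{\on{mot}}(2)$ on smooth schemes (using $\CH^2(\BZ_{\on{mot}}(2)) \simeq \CK_2^M \simeq \CK_2$, $\CH^1(\BZ_{\on{mot}}(2)) \simeq \CK_3^{\on{ind}}$ for the Zariski sheafification of indecomposable $K_3$, and the vanishing of the remaining cohomology sheaves in the relevant range) identifies the cofiber of the map with a shift of $\CK_3^{\on{ind}}$.

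First, I would write both sides as totalizations of the cosimplicial diagram $X \times B^\bullet(G)$. Because $\BZ_{\on{mot}}(2)$ and $\CK_2$ both satisfy Zariski descent, one has
\[
\Gamma(X \times B_{\on{Zar}}(G),\CF) \simeq \Tot\!\bigl(\Gamma(X \times G^\bullet,\CF)\bigr)
\]
for $\CF \in \{\BZ_{\on{mot}}(2)[2],\CK_2\}$. Each level $X \times G^p$ is a smooth scheme, so the sheaf-level cofiber identification applies term-by-term, and passing to $\Tot$ identifies the cofiber of \eqref{e:from mot to K BG} with $R\Gamma(X \times B_{\on{Zar}}(G),\CK_3^{\on{ind}})[2]$. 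The asserted $\tau^{\geq 0}$ equivalence thus becomes equivalent to the vanishing
\[
H^j\bigl(X \times B_{\on{Zar}}(G),\CK_3^{\on{ind}}\bigr) = 0 \quad \text{for } j \geq 2.
\]

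To prove this vanishing, I would apply the scheme-approximation technique of \secref{sss:E and V}, modelled on the proofs of \propref{p:vanishing of higher cohomology} and \lemref{l:extn}. For any target $N$, choose a representation $V$ of $G$ with an open $E \subset V$ admitting a free $G$-action and satisfying $\on{codim}(V \setminus E, V) > N$; let $(BG)^{\on{appr}} := (E/G)_{\on{et}}$, which is a smooth scheme. Using the $\BA^1$-invariance of $\CK_3^{\on{ind}}$ (it is a cohomology sheaf of the $\BA^1$-invariant complex $\BZ_{\on{mot}}(2)$) together with the Gersten resolution and motivic purity, one obtains isomorphisms
\[
H^j(X \times B_{\on{Zar}}(G),\CK_3^{\on{ind}}) \simeq H^j(X \times (V/G)_{\on{Zar}},\CK_3^{\on{ind}}) \simeq H^j(X \times (E/G)_{\on{Zar}},\CK_3^{\on{ind}})
\]
valid in the range $j \leq N$. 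Invoking the Gersten complex for $\CK_3^{\on{ind}}$ on the smooth scheme $X \times (BG)^{\on{appr}}$, whose length is matched to the weight $2$, then delivers the desired vanishing in $j \geq 2$.

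The hardest step will be the final Zariski-versus-\'etale comparison $(E/G)_{\on{Zar}}$ versus $(E/G)_{\on{et}}$: this is trivial for $G = GL_n$, but for general reductive $G$ the two prestacks differ because $G$-torsors need not be Zariski-locally trivial. One must check that the cohomology of $\CK_3^{\on{ind}}$ in the bounded range being considered is insensitive to this distinction; this step rests on motivic purity and the specific weight-$2$ structure of the complex $\BZ_{\on{mot}}(2)$, and constitutes the technical heart of the argument reproduced from \cite{EKLV}.
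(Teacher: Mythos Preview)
Your approach diverges from the paper's, and the step you flag as hardest is a genuine gap that the paper does \emph{not} resolve in the way you suggest.

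The paper does not use the scheme approximation $(BG)^{\on{appr}}$ for this theorem at all. Instead it works directly with the cosimplicial model $X\times G^\bullet$ and exploits the \emph{rationality} of $G$ (hence of each $G^j$) over $k$. By the Gersten resolution, the fiber of $\Gamma(Y,\BZ_{\on{mot}}(2)[2])\to \CK_2(Y)$ on a smooth connected $Y$ has the form $(j_Y)_*(L_{\eta_Y})$ for a complex $L_{\eta_Y}$ in degrees $\leq -1$ pushed forward from the generic point. The key observation is that $L_{\eta_Y}$ is a birational invariant: if $Y\to X$ makes $Y$ rational over the generic point of $X$, then $L_{\eta_X}\to L_{\eta_Y}$ is an isomorphism (reduce to $Y=X\times\BA^n$ and use $\BA^1$-invariance of both $\Gamma(-,\BZ_{\on{mot}}(2))$ and $\Gamma(-,\CK_2)$). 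Hence the cosimplicial object $j\mapsto L_{\eta_{X\times G^j}}$ is \emph{constant} with value $L_{\eta_X}$, so its totalization is $L_{\eta_X}$ itself, which lives in degrees $\leq -1$. That is the entire argument.

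Your route instead requires the comparison of $\CK_3^{\on{ind}}$-cohomology on $(E/G)_{\on{Zar}}$ and $(E/G)_{\on{et}}$ for general reductive $G$, which you leave unproved. This is not ``the technical heart of the argument reproduced from \cite{EKLV}'' as you write; the paper's reproduction avoids this comparison entirely. Indeed, the Remark following \propref{p:vanishing of higher cohomology} and Remark~\ref{r:final} emphasize that the analogous comparison for $\CK_2$ \emph{fails} for general $G$ (the discrepancy is exactly the index-$N$ subgroup generated by Chern classes of representations), so one cannot expect it to hold for the fiber without a separate argument. And even granting the scheme-level vanishing $H^j(Y,\CK_3^{\on{ind}})=0$ for $j\geq 2$ on smooth $Y$, this does not automatically transfer to $(E/G)_{\on{Zar}}$ via \v{C}ech totalization over $E\times G^\bullet$, since the totalization can introduce cohomology in higher simplicial degrees; controlling that is precisely what the paper's constancy-at-the-generic-point argument accomplishes.
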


This theorem is contained in \cite{EKLV}, but may not be stated there explicitly. 
We will supply a proof, reproducing the arguments from {\it loc.cit.} 

\sssec{}

Let us assume \thmref{t:mot} and show how it allows to construct a map
\begin{equation} \label{e:from K to mu mod l again}
\Maps_{\on{based}}(X\times B_{\on{Zar}}(G),B^2_{\on{Zar}}(\bK_2))/\ell\to \Maps_{\on{based}}(X\times B_{\on{Zar}}(G),B^4_{\on{et}}(\mu^{\otimes 2}_\ell)),
\end{equation}
or, equivalently, a map
\begin{equation} \label{e:from K to mu}
\Maps_{\on{based}}(X\times B_{\on{Zar}}(G),B^2_{\on{Zar}}(\bK_2))\to \Maps_{\on{based}}(X\times B_{\on{Zar}}(G),B^4_{\on{et}}(\mu^{\otimes 2}_\ell)),
\end{equation}

Indeed, from \thmref{t:mot} we obtain that
$$\Maps_{\on{based}}(X\times B_{\on{Zar}}(G),B^2_{\on{Zar}}(\bK_2))$$ identifies with 
$$\tau^{\geq -2,\leq 0}\biggl(\Gamma(X\times B_{\on{Zar}}(G);X\times \on{pt},\BZ_{\on{mot}}(2)[4])\biggr),$$
while $\Maps_{\on{based}}(X\times B_{\on{Zar}}(G),B^4_{\on{et}}(\mu^{\otimes 2}_\ell))$ identifies with
$$\tau^{\leq 0}\biggl(\Gamma_{\on{et}}(X\times B_{\on{Zar}}(G);X\times \on{pt},\mu_\ell^{\otimes 2}[4])\biggr),$$
which projects isomorphically to
$$\tau^{\geq -2,\leq 0}\biggl(\Gamma_{\on{et}}(X\times B_{\on{Zar}}(G);X\times \on{pt},\mu_\ell^{\otimes 2}[4])\biggr).$$

\medskip

Now, the required map follows from the canonical assingment for any smooth scheme $S$ of a map
\begin{equation} \label{e:from Z to mu}
\Gamma(S,\BZ_{\on{mot}}(n))\to \Gamma(S,\BZ/\ell_{\on{mot}}(n))\to \Gamma_{\on{et}}(S,\mu_\ell^{\otimes n}).
\end{equation}

Indeed, the \'etale sheafification of $\BZ_{\on{mot}}(n)/\ell$ is isomorphic to $\mu_\ell^{\otimes n}$
(this is \cite[Theorem 10.2]{MVW}). 

\sssec{}

We propose:

\begin{conj} \label{c:param compat}
The diagram of equivalences
$$
\CD
\Maps(X\times B_{\on{Zar}}(T),B^2_{\on{Zar}}(\bK_2)))/\ell    @>{\text{\eqref{e:from K to mu mod l again}}}>>  
\Maps(X\times B_{\on{Zar}}(G),B^4_{\on{et}}(\mu^{\otimes 2}_\ell)) \\
@V{\text{\eqref{e:the map}}/\ell}VV   @VV{\text{\eqref{e:param gerbe}}}V   \\
\on{FactorLine}(\Gr_{G,\Ran})/\ell  @>{\text{\eqref{e:Kummer on Gr mod l}}}>>  \on{FactGerbe}_{\mu_\ell}(\Gr_{G,\Ran})
\endCD
$$
commutes.
\end{conj}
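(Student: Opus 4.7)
The approach is to reduce to the case of a torus, following the strategy of \secref{s:construction gen}, and then to verify the commutativity directly using the explicit descriptions on both sides. Each of the four arrows in the diagram is natural in $G$ and compatible with pullback along maps of reductive groups; in particular, both composites commute with descent along $\wt{G}\to G$ and with the $\Gr_{T,\Ran}$-equivariance coming from \eqref{e:cover G}, so one reduces (just as in \corref{c:BD 2} and the simplicial argument surrounding it) to the case where $G=T$ is a torus.

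For tori, the canonical lift $\kappa'$ of $\kappa$ constructed in \secref{sss:lift for torus} provides a common refinement of both composites. On the left-then-bottom side, the factorizable line bundle $\CL_{\Gr}$ is computed from $\kappa'$ via the $K$-theoretic trace map \eqref{e:residue spectrum} and then pushed to a $\mu_\ell$-gerbe via Kummer. On the top-then-right side, one applies the motivic-to-\'etale comparison together with the Kummer identification $\BZ_{\on{mot}}(1)/\ell \simeq \mu_\ell$ to the motivic refinement of $\kappa'$. Since the map \eqref{e:from mot to K} comes from a natural transformation between motivic and $K$-theoretic sheaves of spectra, and is hence compatible with direct image along $\pi \colon S\times X \to S$ and with localization along $\iota \colon Z_I \to S\times X$, the commutativity of the diagram reduces to the pointwise statement that the Bockstein for the Kummer sequence $0\to \mu_\ell\to \BG_m\to \BG_m\to 0$ agrees, under the identification $\BZ_{\on{mot}}(1)[1]\simeq \CO^\times$, with the motivic connecting map for $0\to \BZ_{\on{mot}}(1) \overset{\ell}\to \BZ_{\on{mot}}(1) \to \BZ_{\on{mot}}(1)/\ell\to 0$; this last identification is tautological.

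\textbf{Main obstacle.} The chief technical difficulty is setting up the motivic analog of the construction of \secref{s:spectrum}, namely a trace map for $\BZ_{\on{mot}}(2)$ with supports on $Z_I$ that is genuinely functorial in the test scheme $S$, not just for $S$ smooth or a fat point. One would expect to obtain this either by adapting the spectrum-level argument of \secref{s:spectrum} to a motivic spectrum, or by invoking a raising-to-the-power trick as in \secref{ss:power}. In the torus case, however, \secref{sss:lift for torus} shows that no power is needed, so the comparison can be carried out at the level of smooth schemes and fat points, where Gersten-type resolutions for both $\BZ_{\on{mot}}(2)$ and $\CK_2$ (cf.\ \cite[Corollary 6.3.3]{BV}) provide a uniform framework; the requisite compatibility then reduces to the classical compatibility of tame symbols with Bockstein operations, analogous to arguments in \cite{EKLV}. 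A clean conceptual proof would ultimately come from promoting \thmref{t:mot} to a statement about sheaves of spectra, which would make the motivic and $K$-theoretic residue constructions manifestly the same on the $\tau^{\geq 0}$ truncation.
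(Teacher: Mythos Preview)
The statement you are attempting to prove is \conjref{c:param compat}, which the paper explicitly states as a \emph{conjecture} and does not prove. There is therefore no proof in the paper to compare your attempt against; the paper simply writes ``We propose:'' and leaves the assertion open.

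Your sketch is a plausible strategy---reduction to tori via the descent apparatus of \secref{s:construction gen}, followed by a direct comparison of the two residue-type constructions using a common motivic/$K$-theoretic refinement---but it is not a proof, and you yourself flag the main gap correctly. The core difficulty is precisely the one you name under ``Main obstacle'': one needs a functorial motivic (or spectrum-level) trace map $\Gamma(S\times X; U_I,\BZ_{\on{mot}}(2)[4])\to \Gamma(S,\CO_S^\times[1])$ valid for \emph{arbitrary} test schemes $S$, together with its compatibility with the $K$-theoretic trace \eqref{e:residue spectrum}. The paper's entire \secref{s:particular} and \secref{s:spectrum} are devoted to circumventing the absence of such a map on the $K$-theory side, and the analogous problem on the motivic side is no easier. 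Your appeal to ``Gersten-type resolutions'' for smooth schemes and fat points is fine as far as it goes, but the passage from those special cases to the general statement is exactly where the paper had to invoke the raising-to-the-power trick and the global compactification argument; replicating that for the motivic comparison, and showing the resulting identifications are compatible with the ones on the $K$-theory side, is real work that your sketch does not carry out. In short: reasonable outline for an attack on an open problem, not a proof.
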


\sssec{Proof of \thmref{t:mot}}  

We essentially reproduce the proof from \cite[Sect. 6]{EKLV}. The assertion of the theorem 
is valid not just for $X\times B_{\on{Zar}}(G)$ but for prestacks $\CY$ of the following form:

\medskip

We need $\CY$ to be the geometric realization (=colimit) of a smooth simplicial prestack $Y^\bullet$ 
(either sheafified in Zariski topology or not) over a smooth scheme $X$, such that each term 
$Y^j$ is a \emph{rational} variety over the generic point of $X$ (see \cite[Lemma 6.2]{EKLV}
for what this means). 

\medskip

Consider again the map \eqref{e:from mot to K Gamma}. By \secref{sss:mot to K}, its fiber has the form 
$$(j_Y)_*(L_{\eta_Y}),$$
where $\eta_Y\overset{j_Y}\hookrightarrow Y$ is the embedding of the generic point, and $L_{\eta_Y}$ is a complex
of abelian groups living in degrees $\leq -1$. 

\medskip

Hence, it suffices to show that the totalization of the cosimplicial complex that attaches to $j$
\begin{equation} \label{e:cosimp gen}
L_{\eta_{Y^j}}
\end{equation} 
is acyclic in degrees $\geq 0$.

\medskip

The crucial observation is that the above cosimplicial complex is constant with value $L_{\eta_X}$. This would
imply that its totalization, which is isomorphic to $L_{\eta_X}$, lives in degrees $\leq -1$.

\medskip

To prove that \eqref{e:cosimp gen} is constant, it is enough to show that if $Y\to X$ is a morphism 
of smooth varieties such that $Y$ is rational over the generic point of $X$, then the map
$$L_{\eta_X}\to L_{\eta_Y}$$
is an isomorphism. As the assertion only depends on the generic point if $Y$, we can assume that
$Y$ is of the form $X\times \BA^n$. 

\medskip

We have a map of fiber sequences
$$
\CD
L_{\eta_{Y}}   @>>>  \Gamma(Y,\BZ_{\on{mot}}(2)[2])  @>>> \Gamma(Y,\CK_2|_Y) \\
@AAA  @AAA  @AAA \\ 
L_{\eta_{X}}   @>>>  \Gamma(X,\BZ_{\on{mot}}(2)[2])  @>>> \Gamma(X,\CK_2|_X). 
\endCD
$$

Now, the right and middle vertical arrows are isomorphisms by the $\BA^1$-invariance of $\Gamma(-,\BZ_{\on{mot}}(n))$
and $\Gamma(-,\CK_2)$, respectively. Hence, the left vertical arrow is an isomorphism, as required. 

\ssec{Comparing with \'etale motivic cohomology}

\sssec{}

Consider now the \emph{\'etale} motivic cohomology $\Gamma_{\on{et}}(X\times B_{\on{Zar}}(G),\BZ_{\on{mot}}(2))$ of $X\times B_{\on{Zar}}(G)$. 
I.e., this is by definition the totalization of the cosimplicial complex 
$$\Gamma_{\on{et}}(X\times B^\bullet(G),\BZ_{\on{mot}}(2)),$$ where
for a smooth scheme $Y$ we denote by $\Gamma_{\on{et}}(Y,\BZ_{\on{mot}}(n))$ the \'etale cohomology of $\BZ_{\on{mot}}(n)$. 

\sssec{} 

Let $(V,E,(BG)^{\on{appr}})$ be as in \secref{sss:E and V}. Then 
\begin{multline*}
\Gamma_{\on{et}}(X\times B_{\on{Zar}}(G),\BZ_{\on{mot}}(2))\simeq \Gamma_{\on{et}}(X\times (V/G)_{\on{Zar}},\BZ_{\on{mot}}(2))\simeq  \\
\simeq \Gamma_{\on{et}}(X\times (E/G)_{\on{Zar}},\BZ_{\on{mot}}(2))\simeq \Gamma_{\on{et}}(X\times (E/G)_{\on{et}},\BZ_{\on{mot}}(2))\simeq 
\Gamma_{\on{et}}(X\times (BG)^{\on{appr}},\BZ_{\on{mot}}(2)).
\end{multline*} 

\sssec{}

We will now reproduce another result from \cite[Lemma 6.2]{EKLV}:

\begin{thm} \label{t:mot etale}
The map $$\Gamma(X\times B_{\on{Zar}}(G);X\times \on{pt},\BZ_{\on{mot}}(2))\to 
\Gamma_{\on{et}}(X\times B_{\on{Zar}}(G);X\times \on{pt},\BZ_{\on{mot}}(2))$$
is an isomorphism in degrees $\leq 4$.
\end{thm}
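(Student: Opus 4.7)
The plan is to adapt the cosimplicial rationality argument from the proof of \thmref{t:mot} to the Zariski-to-\'etale comparison, using the Beilinson-Lichtenbaum theorem (proved by Voevodsky-Rost) as the starting input. As in the proof of \thmref{t:mot}, I first replace $B_{\on{Zar}}(G)$ by the smooth approximation $(BG)^{\on{appr}}$ from \secref{sss:E and V}: both $\Gamma(-,\BZ_{\on{mot}}(2))$ and $\Gamma_{\on{et}}(-,\BZ_{\on{mot}}(2))$ are $\BA^1$-invariant, so the argument of \thmref{t:mot} applies identically to reduce the assertion to a statement about the smooth simplicial scheme $X\times G^\bullet$, each of whose terms is rational over $\eta_X$.

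Let $C$ denote the sheaf-level cofiber of $\BZ_{\on{mot}}(2)\to R\pi_*(\BZ_{\on{mot}}(2)_{\on{et}})$ on the Zariski site of a smooth scheme. By Beilinson-Lichtenbaum, $C$ is concentrated in degrees $\geq 3$, so $R\Gamma(Y,C)$ already vanishes in degrees $\leq 2$ and the question reduces to controlling its contribution in degrees $3$ and $4$. The key input is that the cohomology sheaves $\CH^i(C)=R^i\pi_*(\BZ_{\on{mot}}(2)_{\on{et}})$ are torsion (rationally the comparison is an iso), and by Bloch-Kato they admit a Gersten-type resolution supported at points of $Y$, with stalks at the generic point given by Galois cohomology with $\BQ/\BZ(2)$-coefficients.

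Now I would run the mechanism from \thmref{t:mot}: for $Y^j=X\times G^j$, because $Y^j$ is rational over $\eta_X$, the relevant generic-point stalks $H^{i-1}(\eta_{Y^j},\BQ/\BZ(2))$ agree with $H^{i-1}(\eta_X,\BQ/\BZ(2))$ (by $\BA^1$-invariance and purely transcendental extension arguments for Galois cohomology). Consequently the cosimplicial complex computing $R\Gamma(X\times G^\bullet,C)$ in degrees $3$ and $4$ is constant in $\bullet$, with value $R\Gamma(X,C|_X)$. Taking the relative version cancels exactly this contribution: the fiber of
\[
R\Gamma(X\times B_{\on{Zar}}(G),C)\to R\Gamma(X,C|_X)
\]
then vanishes in degrees $\leq 4$, which together with Beilinson-Lichtenbaum yields the claimed isomorphism.

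The main obstacle is the degree-$4$ case, which lies strictly beyond the direct range $i\leq n=2$ of Beilinson-Lichtenbaum; there the relevant sheaf is $R^4\pi_*(\BZ_{\on{mot}}(2)_{\on{et}})$, and one has to verify that (i) it admits a sufficiently good Gersten-type presentation with torsion coefficients, and (ii) the rationality-of-$X\times G^j$-over-$\eta_X$ argument really trivializes the cosimplicial differentials for both the $\CH^3$ and $\CH^4$ contributions simultaneously. This is precisely the content of the EKLV argument, which I would reproduce essentially verbatim.
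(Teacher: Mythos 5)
Your proposal mislocates the key input, and the step you yourself flag as ``the main obstacle'' is exactly the place where it breaks. The fact cited from \cite{EKLV} (formula (6.3), i.e.\ the integral Beilinson--Lichtenbaum statement in weight $2$, a consequence of Merkurjev--Suslin) is that $\on{coFib}\bigl(\BZ_{\on{mot}}(2)\to \nu_*(\BZ_{\on{mot}}(2))\bigr)$ is concentrated in degrees $\geq 4$ as a Zariski sheaf on any smooth scheme --- not merely $\geq 3$, and degree $4$ is \emph{not} ``beyond the range'' of this input. With the sharp bound the theorem is essentially formal and needs none of the machinery you import: each term of the relative cosimplicial complex $\Gamma_{\on{Zar}}\bigl(Y^j;X,\on{coFib}(\cdots)\bigr)$ lives in degrees $\geq 4$, hence so does its totalization, and the $H^4$ of the totalization injects into the $H^4$ of the $0$-simplices, which vanishes because $Y^0\simeq X$ and we work relative to $X$. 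In particular no reduction to $(BG)^{\on{appr}}$, no Gersten resolution for $R^i\nu_*$, and no rationality of the terms $X\times G^j$ over $\eta_X$ is used; the paper even notes the statement holds for any smooth simplicial prestack over $X$ with $Y^0\simeq X$, with no rationality hypothesis.

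With your weaker starting point (concentration in degrees $\geq 3$) the argument you sketch does not go through as stated. Your claimed constancy of the cosimplicial complex ``in degrees $3$ and $4$'' cannot be deduced from invariance of generic-point stalks: in the proof of \thmref{t:mot} the fiber was literally of the form $(j_Y)_*(L_{\eta_Y})$, a complex supported at the generic point, which is why rationality over $\eta_X$ sufficed; here the cohomology sheaves $\CH^3,\CH^4$ of the cofiber would have Gersten resolutions with terms at points of all codimensions, and $H^4$ of $R\Gamma(Y^j,-)$ involves $H^1_{\on{Zar}}(Y^j,\CH^3)$, which is not controlled by the stalk at $\eta_{Y^j}$. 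So the degree-$4$ case is a genuine gap in your write-up, deferred to ``the EKLV argument'' without identifying it; the content of that argument is precisely the degrees-$\geq 4$ concentration above, which, once stated correctly, renders the rest of your apparatus unnecessary.
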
 

\sssec{}

\thmref{t:mot etale} applies more generally than the pair $(X\times B_{\on{Zar}}(G),X\times \on{pt})$. Namely, we can replace 
$X\times B_{\on{Zar}}(G)$ by a prestack $\CY$ that can be realized as
the geometric realization of a smooth simplicial prestack $Y^\bullet$ over $X$, such that $Y^0\simeq X$
(in particular, the inclusion of $0$-simplices provides a map $X\to \CY$). 

\begin{proof}[Proof of \thmref{t:mot etale}]

Let $\nu_*$ denote the direct image functor from the \'etale to the Zariski site for a given scheme $Y$. We have
$$\Gamma_{\on{et}}(Y,\BZ_{\on{mot}}(2))\simeq \Gamma_{\on{Zar}}(Y,\nu_*(\BZ_{\on{mot}}(2))).$$

Hence, it suffices to show that the totalization of the cosimplicial complex that attaches to $j$
\begin{equation} \label{e:cosimpl trunc}
\Gamma_{\on{Zar}}\biggl(Y^j;X,\on{coFib}\left(\BZ_{\on{mot}}(2)\to \nu_*(\BZ_{\on{mot}}(2))\right)\biggr)
\end{equation}
lives in cohomological degrees $>4$.

\medskip

The key fact that we will use is that for a smooth scheme $Y$, the object
$$\on{coFib}(\BZ_{\on{mot}}(2)\to \nu_*(\BZ_{\on{mot}}(2)))\in \on{Shv}_{\on{Zar}}(Y,\on{Ab})$$
lives in cohomological degrees $\geq 4$. See \cite[Formula (6.3)]{EKLV} and references therein. 

\medskip

Hence, \eqref{e:cosimpl trunc} a priori lives in degrees $\geq 4$.  

\medskip

Furthermore, its 4th cohomology injects into the 4th cohomology of the $0$-simplices, while the latter vanishes
by the assumption that $Y^0\simeq X$. 

\end{proof} 

\begin{rem}
\thmref{t:mot etale} (combined with \thmref{t:mot}) 
can be seen as ``explaining" the computation of the homotopy groups of $\Maps_{\on{based}}(X\times B_{\on{Zar}}(T),B^2_{\on{Zar}}(\bK_2))$,
given by \corref{c:BD 3}. Namely, we will now show that for $\ell$ coprime with $\on{char}(k)$, 
the map \eqref{e:from K to mu mod l again} factors as 
\begin{multline*} 
\Maps_{\on{based}}(X\times B_{\on{Zar}}(T),B^2_{\on{Zar}}(\bK_2))/\ell \to  \Maps_{\on{based}}(X\times B_{\on{Zar}}(G),B^4_{\on{et}}(\BZ_\ell(2)))/\ell \hookrightarrow \\
\hookrightarrow \Maps_{\on{based}}(X\times B_{\on{Zar}}(G),B^4_{\on{et}}(\mu^{\otimes 2}_\ell)),
\end{multline*} 
where the first arrow is fully faithful (but combined with \corref{c:BD 3} we actually know that it is an equivalence). Indeed, we have: 
\begin{multline*} 
\pi_i\biggl(\Maps_{\on{based}}(X\times B_{\on{Zar}}(T),B^2_{\on{Zar}}(\bK_2))/\ell\biggr)=
H^{2-i}\biggl(\tau^{\leq 2}(\CK_2(X\times B_{\on{Zar}}(G)))/\ell\biggr)\simeq \\
\simeq 
H^{4-i}\biggl(\tau^{\leq 4}(\Gamma(X\times B_{\on{Zar}}(G);X\times \on{pt},\BZ_{\on{mot}}(2)))/\ell\biggr)
\simeq H^{4-i}\biggl(\tau^{\leq 4}(\Gamma_{\on{et}}(X\times B_{\on{Zar}}(G);X\times \on{pt},\BZ_{\on{mot}}(2)))/\ell\biggr).
\end{multline*} 

We claim now that the natural map
\begin{multline*} 
H^{4-i}\biggl(\tau^{\leq 4}(\Gamma_{\on{et}}(X\times B_{\on{Zar}}(G);X\times \on{pt},\BZ_{\on{mot}}(2)))/\ell\biggr)\to
H^{4-i}\biggl(\tau^{\leq 4}(\Gamma_{\on{et}}(X\times B_{\on{Zar}}(G);X\times \on{pt},\BZ_\ell(2)))/\ell\biggr) = \\
=\pi_i\left(\Maps_{\on{based}}(X\times B_{\on{Zar}}(G),B^4_{\on{et}}(\BZ_\ell(2)))/\ell \right)
\end{multline*} 
is an injection for $i=0$ and an isomorphism for $i>0$. Indeed, this follows from the long exact cohomology sequences
associated with
$$0\to \BZ_{\on{mot}}(2) \overset{\ell}\to \BZ_{\on{mot}}(2) \to \BZ_{\on{mot}}(2)/\ell\to 0 \text{ and }
0\to \BZ_\ell(2) \overset{\ell}\to \BZ_\ell(2) \to \mu_\ell^{\otimes 2}\to 0$$
and the fact that the map
$$H^i_{\on{et}}(Y,\BZ_{\on{mot}}(n)/\ell) \to H^i_{\on{et}}(Y,\mu_\ell^{\otimes n})$$
is an isomorphism for any $n$ and $\ell$ coprime with $\on{char}(k)$ over any smooth scheme $Y$ (see \cite[Theorem 10.2]{MVW}).

\end{rem}

\ssec{K-theory and motivic cohomology of the algebraic stack classifying space}  \label{ss:classifying}

\sssec{}

Let $\CY$ be a smooth algebraic stack. We do not a priori know how to define 
$$\Gamma(\CY,\BZ_{\on{mot}}(n)).$$

This is due to the fact that $\BZ_{\on{mot}}(n)$ does not have \'etale descent, so it is not clear how to justify that
we can probe $\CY$ by mapping to it only smooth test-schemes.

\sssec{}

Assume, however, that $\CY$ admits a map
$$Y\to \CY,\quad Y\in \on{Sch},$$
that is smooth, schematic and \emph{surjective in the Zariski topology}. 

\medskip

We claim that for such $\CY$, we can define $\Gamma(\CY,\BZ_{\on{mot}}(n))$. Namely, we let $\Gamma(\CY,\BZ_{\on{mot}}(n))$ be the totalization of
the cosimplicial complex 
$$\Gamma(Y^\bullet,\BZ_{\on{mot}}(n)),$$
where $Y^\bullet$ is the \v{C}ech nerve of $Y\to \CY$. It is easy to see that this definition is canonically independent of the
choice of $Y\to \CY$.

\sssec{Example}

Any quotient stack $(Z/G)_{\on{et}}$, where $Z$ is a smooth scheme and $G$ a linear algebraic group, 
has this property: indeed, embed $G$ into $GL_n$ and write
$$(Z/G)_{\on{et}}\simeq ((Z\times (GL_n/G)_{\on{et}})/GL_n)_{\on{et}}\simeq ((Z\times (GL_n/G)_{\on{et}})/GL_n)_{\on{Zar}},$$
so the required map is given by $Z\times (GL_n/G)_{\on{et}}\to (Z/G)_{\on{et}}$. 

\sssec{}

Let $\CY$ be of the form  $(Z/G)_{\on{et}}$, and consider a triple $(V,E,(BG)^{\on{appr}})$, where $\on{codim}(V-E,V)>n$. 
Note that the action of $G$ on $Z\times E$ is free (i.e., the algebraic stack quotient is a scheme) and denote
$$(Z/G)^{\on{appr}}:=((Z\times E)/G)_{\on{et}}.$$

We claim:

\medskip

\begin{prop}  \label{p:classifying}
The canonical maps
$$\Gamma((Z/G)_{\on{et}},\BZ_{\on{mot}}(n)) \to \Gamma((Z/G)^{\on{appr}},\BZ_{\on{mot}}(n)) \text{ and }
\CK_n((Z/G)_{\on{et}}) \to \CK_n((Z/G)^{\on{appr}})$$
are isomorphisms.
\end{prop}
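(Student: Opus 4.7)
The plan is to interpolate between $(Z/G)_{\on{et}}$ and $(Z/G)^{\on{appr}}$ via the intermediate stack $\CY_V:=(Z\times V)/G_{\on{et}}$, which is canonically a rank-$\dim V$ vector bundle over $(Z/G)_{\on{et}}$ and whose open substack corresponding to $Z\times E\subset Z\times V$ is precisely the scheme $(Z/G)^{\on{appr}}$. Fixing a smooth Zariski-surjective cover $Y\to (Z/G)_{\on{et}}$ as in \secref{ss:classifying} (for instance $Y=Z\times (GL_n/G)$, using $G\hookrightarrow GL_n$), the pullback $p^*Y\to\CY_V$ along the vector bundle $p:\CY_V\to(Z/G)_{\on{et}}$ is again smooth and Zariski-surjective, and its further restriction along the open immersion $(Z/G)^{\on{appr}}\hookrightarrow\CY_V$ covers $(Z/G)^{\on{appr}}$. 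The \v{C}ech nerves of these three covers are related level-wise: $p^*Y^{[j]}\to Y^{[j]}$ is a rank-$\dim V$ vector bundle of smooth schemes, and $p^*Y^{[j]}\times_V E\subset p^*Y^{[j]}$ is an open immersion whose complement has codimension $\on{codim}(V\setminus E,V)>n$.

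First I would show that $p:\CY_V\to(Z/G)_{\on{et}}$ induces equivalences on $\Gamma(-,\BZ_{\on{mot}}(n))$ and $\CK_n(-)$. At each level of the \v{C}ech nerve this reduces to checking that pullback along the vector bundle $p^*Y^{[j]}\to Y^{[j]}$ is an equivalence for each theory. Since both $\BZ_{\on{mot}}(n)$ and $\CK_n$ are $\BA^1$-invariant on smooth schemes (the latter by \cite[Sect.~2]{Sh}, the former by standard motivic theory, cf.\ \cite{MVW}), and both are Zariski sheaves, the claim holds level-wise, hence on totalizations.

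Second I would show that the open immersion $(Z/G)^{\on{appr}}\hookrightarrow\CY_V$ likewise induces equivalences. At each level of the nerve we are removing from the smooth scheme $p^*Y^{[j]}$ a closed subscheme of codimension $>n$. For $\CK_n$ this is immediate from the Gersten resolution recalled in \secref{sss:Gersten}, whose terms involve only codimensions $\leq n$. For $\BZ_{\on{mot}}(n)$, the corresponding statement follows from the motivic Gersten (Bloch--Ogus) resolution, whose codimension-$c$ term involves $\BZ_{\on{mot}}(n-c)$ at the generic points of codim-$c$ subvarieties and hence vanishes for $c>n$. Composing the two steps yields the proposition.

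The main obstacle I anticipate is purely technical: one must check that the formalism of \secref{ss:classifying} really does let us compute $\Gamma(\CY_V,-)$ via the pulled-back cover $p^*Y\to\CY_V$ (equivalently, that $p^*Y\to\CY_V$ is a smooth Zariski-surjective scheme cover, and that its \v{C}ech nerve identifies with $p^*(Y^{[\bullet]})$), and that at each level of the nerve the base change $p^*Y^{[j]}$ is again a smooth scheme so that Gersten and $\BA^1$-invariance apply. These points are all formal, since smoothness and Zariski-surjectivity are preserved under pullback along the vector bundle $p$ and codimension is preserved under smooth base change; the independence-of-cover clause of \secref{ss:classifying} then identifies the outputs with the intrinsically defined $\Gamma(\CY_V,\BZ_{\on{mot}}(n))$ and $\CK_n(\CY_V)$.
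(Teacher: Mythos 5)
Your proposal is correct and takes essentially the same route as the paper's proof: interpolation through the intermediate stack $(Z\times V)/G$, reduction to the \v{C}ech nerve of the presentation $Z\times (GL_n/G)_{\on{et}}\to (Z/G)_{\on{et}}$ coming from an embedding $G\hookrightarrow GL_n$, and a level-wise check using $\BA^1$-invariance for the first comparison and the codimension bound via the Gersten resolution for the second, applied to both $\CK_n$ and $\BZ_{\on{mot}}(n)$.
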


\begin{proof}

We will give a proof for $\CK_n$, as the proof for $\BZ_{\on{mot}}(n)$ is similar. We need to show that the maps
$$\CK_n((Z/G)_{\on{et}})\to \CK_n(((Z\times V)/G)_{\on{et}}) \to \CK_n(((Z\times E)/G)_{\on{et}})$$
are isomorphisms. 

\medskip

Embedding $G$ into $GL_n$ and denoting $W:=(GL_n/G)_{\on{et}}$, we rewrite the above as
$$\CK_n\biggl(((Z\times W)/GL_n)_{\on{Zar}}\biggr) \to 
\CK_n\biggl(((Z\times V\times W)/GL_n)_{\on{Zar}}\biggr) \to \CK_n\biggl(((Z\times E\times W)/GL_n)_{\on{Zar}}\biggr),$$
which comes from a map of cosimplicial complexes that attaches to $j$
$$\CK_n(Z\times W\times (GL_n)^{j-1}) \to \CK_n(Z\times W\times V\times (GL_n)^{j-1})\to  
\CK_n(Z\times W\times E\times (GL_n)^{j-1}).$$

Now, the first map is a (simplex-wise) isomorphism by $\BA^1$-invariance, and the second map is a 
(simplex-wise) isomorphism by the assumption on the codimension. 

\end{proof} 

\begin{rem}   \label{r:final}

To summarize, we obtain the following commutative diagram 
$$
\CD
& & H^4(BG_{\on{top}},\BZ) \\
& &  @A{\sim}AA  \\
& & \on{Quad}(\Lambda_G,\BZ)^{W_G} \\
& &  @A{\sim}AA  \\
H^4_{\on{et}}(B_{\on{Zar}}(G),\BZ_{\on{mot}}(2))   @<{\sim}<<  H^4_{\on{Zar}}(B_{\on{Zar}}(G),\BZ_{\on{mot}}(2))   @>{\sim}>>  H^2(\CK_2(B_{\on{Zar}}(G))) \\
@A{\sim}AA   @AAA   @AAA  \\
H^4_{\on{et}}(B_{\on{et}}(G),\BZ_{\on{mot}}(2))   @<<<  H^4_{\on{Zar}}(B_{\on{et}}(G),\BZ_{\on{mot}}(2))  @>{\sim}>> H^2(\CK_2(B_{\on{et}}(G))) \\
@V{\sim}VV   @V{\sim}VV   @V{\sim}VV  \\
H^4_{\on{et}}((BG)^{\on{appr}},\BZ_{\on{mot}}(2))   @<<<  H^4_{\on{Zar}}((BG)^{\on{appr}},\BZ_{\on{mot}}(2))  @>{\sim}>> H^2(\CK_2((BG)^{\on{appr}})) \\
& & @V{\sim}V{\text{(according to the definition of \cite{To})}}V \\
& & \on{CH}_2(BG). 
\endCD
$$

The arrows \emph{not} marked as isomorphisms are \emph{not} isomorphisms in general, but correspond to the embedding of a 
subgroup of finite index. The corresponding subgroup in $H^4(BG_{\on{top}},\BZ)$ is one generated by the Chern classes of representations. 

\end{rem}

\end{document}